\newcommand{\CM}{Cohen-Macaulay}
\newcommand{\n}{\mathfrak{n} }
\newcommand{\m}{\mathfrak{m} }
\newcommand{\T}{T_{\mathcal{F}}}
\newcommand{\eT}{e_{\mathcal{F}}^T}
\newcommand{\anram}{analytically unramified}
\newcommand{\e}{e_{A}^T}
\newcommand{\rt}{\rightarrow}
\newcommand{\ov}{\overline}
\newcommand{\sub}{\subseteq}
\newcommand{\CMS}{\operatorname{\underline{CM}}}
\newcommand{\CMa}{\operatorname{CM}}
\newcommand{\modA}{\operatorname{mod}}
\newcommand{\redA}{\operatorname{red}}
\newcommand{\cx}{\operatorname{cx}}
\newcommand{\depth}{\operatorname{depth}}
\newcommand{\Tor}{\operatorname{Tor}}
\newcommand{\Hom}{\operatorname{Hom}}
\newcommand{\Ext}{\operatorname{Ext}}
\newcommand{\Syz}{\operatorname{Syz}}
\theoremstyle{plain}
\newtheorem{theorem}{Theorem}[section]
\newtheorem{corollary}[theorem]{Corollary}
\newtheorem{lemma}[theorem]{Lemma}
\newtheorem{proposition}[theorem]{Proposition}
\theoremstyle{definition}
\newtheorem{definition}[theorem]{Definition}
\newtheorem{remark}[theorem]{Remark}
\newtheorem{example}[theorem]{Example}
\theoremstyle{remark}
\begin{document}
	\title[sub-functor of Ext]{A sub-functor for Ext and  Cohen-Macaulay associated graded modules with bounded multiplicity-II}
	\author{Ankit Mishra}
	\email{chandra.ankit412@gmail.com}
	
	\author{ Tony~J.~Puthenpurakal}
	\email{tputhen@math.iitb.ac.in}
	
	\address{Department of Mathematics, IIT Bombay, Powai, Mumbai 400 076}

	\date{\today}
\subjclass{Primary 13A30, 13C14; Secondary 13D40, 13D07}
\keywords{Associated graded rings and modules, Brauer-Thrall conjectures, strict complete intersections, Henselian rings, Ulrich modules, integral closure filtration}
		\begin{abstract}
		Let $(A,\mathfrak{m})$ be a \CM\ local ring, then notion of $T$-split sequence was introduced in part-1 of this paper for $\mathfrak{m}$-adic filtration with the help of numerical function $e^T_A$.   We have explored the relation between AR-sequences and $T$-split sequences. For a Gorenstein ring $(A,\mathfrak{m})$ we define a Hom-finite Krull-Remak-Schmidt category $\mathcal{D}_A$ as a quotient of the stable category $\CMS(A)$. This category preserves isomorphism, i.e. $M\cong N$ in $\mathcal{D}_A$ if and only if $M\cong N$ in $\CMS(A)$.This article has two objectives; first objective is to extend the notion of $T$-split sequence, and second objective is to explore function $e^T_A$ and $T$-split sequence.		
		When $(A,\mathfrak{m})$ be an \anram\ \CM\ local ring and $I$ be an $\mathfrak{m}$-primary ideal  then we extend the techniques in part-1 of this paper to the integral closure filtration with respect to $I$ and prove a version of Brauer-Thrall-II for a class of such rings.
	\end{abstract}
\maketitle
	\section{Introduction}
\noindent Dear Reader, \\
While reading this paper
it will be a good idea to keep a copy of part 1, \cite{Pu trn},
 of this paper with you. The notation used in this paper are
same as that of \cite{Pu trn}.\\
Let $(A,\m)$ be a \CM \ local ring of dimension $d \geq 1$ and let $\CMa(A)$ be the category of maximal \CM \ $A$-modules. In \cite{Pu trn}
second author has constructed  $T \colon \CMa(A)\times \CMa(A) \rt \modA(A)$, a sub-functor of $\Ext^1_A(-, -)$
as follows:
Let $M$ be an MCM  $A$-module. Set
$$ e^T_A(M) = \lim_{n \rt \infty} \frac{(d-1)!}{n^{d-1}}\ell\left(\Tor^A_1(M, \frac{A}{\m^{n+1}}) \right ). $$
This function arose in the second author's study of certain aspects of the  theory of Hilbert functions \cite{Pu0},\cite{PuMCM}.
Using  \cite[Theorem 18]{Pu0} we get that $e^T_A(M)$ is a finite number  and it is zero if and only if $M$ is free.
Let $s \colon 0 \rt N \rt E \rt M \rt 0$ be an exact sequence
of MCM $A$-modules. Then by \cite[2.6]{Pu_stable} we get that $e^T_A(E) \leq e^T_A(M) + e^T_A(N)$. Set $e^T(s) = e^T_A(M) + e^T_A(N) - e^T_A(E)$.
\begin{definition}
We say $s$ is $T$-split if $e^T_A(s) = 0$.
\end{definition}
\begin{definition}
 Let $M, N$ be MCM $A$-modules. Set
 \[
  T_A(M,N) = \{ s \mid s  \ \text{is   a $T$-split extension} \}.
 \]
\end{definition}
We proved \cite[1.4]{Pu trn},
\begin{theorem}(with notation as above)
  $T_A \colon \CMa(A)\times \CMa(A) \rt \modA(A)$ is  a sub-functor of $\Ext^1_A(-, -)$.
\end{theorem}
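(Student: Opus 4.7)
What has to be checked is: (i) for every pair of MCM modules $M,N$ the set $T_A(M,N)$ is an abelian subgroup of $\Ext^1_A(M,N)$, and (ii) this assignment is functorial in $M$ (contravariantly) and in $N$ (covariantly). The only computational tool I plan to use is the inequality $e^T_A(E) \leq e^T_A(M) + e^T_A(N)$ for an MCM short exact sequence $0 \rt N \rt E \rt M \rt 0$ from \cite[2.6]{Pu_stable}; the whole strategy is to apply it twice along two naturally arising short exact sequences and thereby force the equality $e^T_A(E) = e^T_A(M) + e^T_A(N)$ in each case where $T$-splitness needs to be preserved.

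First I would record that $e^T_A(M \oplus N) = e^T_A(M) + e^T_A(N)$, which is immediate from additivity of $\Tor^A_1(-,A/\m^{n+1})$ on direct sums. This observation accomplishes two things at once: the trivial extension $0 \rt N \rt N \oplus M \rt M \rt 0$ lies in $T_A(M,N)$, and the direct sum of two $T$-split sequences is $T$-split.

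Next I would handle functoriality. Given $f \colon M' \rt M$ in $\CMa(A)$ and a $T$-split extension $s \colon 0 \rt N \rt E \xrightarrow{p} M \rt 0$, the fibre product $E'$ fits into the pullback extension $f^*(s) \colon 0 \rt N \rt E' \rt M' \rt 0$ and simultaneously into the auxiliary MCM short exact sequence
\[
0 \rt E' \rt E \oplus M' \xrightarrow{(p,-f)} M \rt 0.
\]
Applying the subadditivity inequality to this auxiliary sequence, using additivity of $e^T_A$ on $E \oplus M'$ and the $T$-splitness of $s$, yields $e^T_A(N) + e^T_A(M') \leq e^T_A(E')$; applying the same inequality to $f^*(s)$ yields the reverse inequality, and hence $f^*(s) \in T_A(M',N)$. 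Dually, for $g \colon N \rt N'$, the pushout $E''$ fits into the auxiliary MCM short exact sequence $0 \rt N \rt E \oplus N' \rt E'' \rt 0$ (with first map $n \mapsto (i(n), -g(n))$), and the same two-sided squeeze shows $g_*(s) \in T_A(M,N')$.

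Closure of $T_A(M,N)$ under the Baer sum then follows formally: $s_1 + s_2$ is the pushforward along $\nabla \colon N \oplus N \rt N$ of the pullback along $\Delta \colon M \rt M \oplus M$ of $s_1 \oplus s_2$, and each of these three operations has been shown to preserve $T$-splitness. Closure under negation is trivial, since $-s$ has the same middle term $E$ and hence the same value of $e^T_A$. These combine to show $T_A(M,N)$ is an abelian subgroup of $\Ext^1_A(M,N)$, and the pullback/pushforward statements give functoriality; thus $T_A$ is a sub-functor of $\Ext^1_A$. The main obstacle I anticipate is writing down the two auxiliary short exact sequences (the fibre-product and pushout sequences) and verifying their exactness in the right form so that the one-sided inequality of \cite[2.6]{Pu_stable} can be bootstrapped into the required equality; everything else reduces to additivity of $e^T_A$ on direct sums.
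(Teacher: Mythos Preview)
Your proposal is correct. The paper does not reprove this $\m$-adic statement here (it is quoted from Part~I), but it does prove the analogous result for the integral-closure filtration $\mathcal{F}$ in Section~4, and the method there differs from yours. The paper first isolates two preparatory lemmas at the end of Section~3 asserting that $T_{\mathcal{F}}$-splitness is preserved under pushforward along any $N \to N_1$ and under pullback along any $M_1 \to M$; these are proved by \emph{induction on $\dim A$}, using a carefully engineered flat base change so that the quotient by a superficial element is again analytically unramified with a filtration of the required shape. The submodule property and functoriality are then deduced formally from these two lemmas, exactly as in your final paragraph.

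Your route bypasses the dimension induction entirely. You observe that the pullback $E'$ sits in a second MCM exact sequence $0 \to E' \to E \oplus M' \to M \to 0$ (and dually the pushout $E''$ in $0 \to N \to E \oplus N' \to E'' \to 0$); applying the subadditivity inequality once to each of the two short exact sequences passing through $E'$ (respectively $E''$) squeezes $e^T_A(E')$ to exactly $e^T_A(N)+e^T_A(M')$. This is more elementary and works uniformly in all dimensions with no base change at all. The paper's inductive apparatus appears to be forced on it only by the $\mathcal{F}$-version's need to keep $I_nC = \overline{I^nC}$ after reduction modulo a superficial element; for the $\m$-adic functor $T_A$ your squeeze argument is a genuine simplification. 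One small remark: you phrase~(i) as ``abelian subgroup'', but the target category is $\modA(A)$, so the $A$-module structure is also required; this is already implicit in your argument, since $r\alpha$ is the pushforward of $\alpha$ along $r\cdot 1_N \colon N \to N$ and hence covered by your covariant-functoriality step.
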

It is not clear from the definition whether $T_A(M,N)$ is non-zero. Theorem \cite[1.5]{Pu trn} shows that there are plenty of $T$-split extensions
if $\dim \Ext^1_A(M,N) > 0$. We proved \cite[1.5]{Pu trn}
\begin{theorem}\label{fl}
 Let $(A,\m)$ be a \CM \ local ring and let $M,N$ be MCM $A$-modules. Then
\[
 \Ext^1_A(M, N)/T_A(M, N) \quad \text{has finite length.}
\]
\end{theorem}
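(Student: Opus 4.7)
Since $\Ext^1_A(M,N)$ is a finitely generated $A$-module and $T_A(M,N)$ is a submodule by the theorem just stated, finite length of the quotient is equivalent to producing a uniform $k$ with $\m^k \Ext^1_A(M,N) \subseteq T_A(M,N)$. My plan is to extract such a $k$ from two applications of the Artin-Rees lemma, one on a first syzygy of $M$ and one on a first syzygy of $N$.

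I would start by rewriting the defect $e^T_A(s)$ of an extension $s: 0 \to N \to E \to M \to 0$ using the connecting maps in the long exact sequence
$$\Tor^A_2(M, A/\m^{n+1}) \xrightarrow{\partial_2} \Tor^A_1(N, A/\m^{n+1}) \to \Tor^A_1(E, A/\m^{n+1}) \to \Tor^A_1(M, A/\m^{n+1}) \xrightarrow{\partial_1} N/\m^{n+1}N.$$
Splitting this into short exact pieces and counting lengths yields
$$e^T_A(s) = \lim_{n\to\infty} \frac{(d-1)!}{n^{d-1}}\bigl(\ell(\image \partial_1) + \ell(\image \partial_2)\bigr),$$
both summands being non-negative. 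Hence it is enough, for some $k$ independent of $s$, to make both images vanish for $n$ large after replacing $s$ by $ms$ with $m \in \m^k$. Fixing free presentations $0 \to K \to F \to M \to 0$ and $0 \to N' \to G \to N \to 0$, the class $[s]$ is represented by a map $\phi: K \to N$, and under the identifications $\Tor^A_1(M, A/\m^{n+1}) = (K \cap \m^{n+1}F)/\m^{n+1}K$ and (via the dimension shift through the free $F$) $\Tor^A_2(M, A/\m^{n+1}) \cong \Tor^A_1(K, A/\m^{n+1})$, both $\partial_1$ and $\partial_2$ are induced by $\phi$; replacing $[s]$ by $m \cdot [s]$ replaces $\phi$ by $m\phi$, rescaling each connecting map by $m$.

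Two applications of Artin-Rees then finish the argument. Choose $c$ so that $K \cap \m^{n+1}F \subseteq \m^{n+1-c}K$ for $n$ large; this gives $\phi(K \cap \m^{n+1}F) \subseteq \m^{n+1-c}\phi(K)$, so for $m \in \m^c$ we obtain $m\phi(K \cap \m^{n+1}F) \subseteq \m^{n+1}N$ and hence $\image \partial_1^{ms} = 0$. Choose $c'$ so that $N' \cap \m^{n+1}G \subseteq \m^{n+1-c'}N'$ for $n$ large; then $\Tor^A_1(N, A/\m^{n+1}) \subseteq \m^{n+1-c'}N'/\m^{n+1}N'$ is annihilated by $\m^{c'}$ uniformly in $n$, and since $\Tor^A_1(-, A/\m^{n+1})$ is $A$-linear in its first argument, $\partial_2^{ms} = m \partial_2^s$ has zero image for $m \in \m^{c'}$. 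Setting $k = \max(c, c')$, every $m \in \m^k$ makes both images zero for large $n$, so $e^T_A(ms) = 0$ and $m[s] \in T_A(M,N)$; hence $\m^k \Ext^1_A(M,N) \subseteq T_A(M,N)$.

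The principal obstacle is the $\partial_2$ argument. For $\partial_1$, the containment $m\phi(K \cap \m^{n+1}F) \subseteq \m^{n+1}N$ falls out of a direct element chase once one applies Artin-Rees to $K \subseteq F$. For $\partial_2$ no such chase is immediately available, and the proof must instead exploit the module-theoretic uniformity that a single fixed power of $\m$ kills $\Tor^A_1(N, A/\m^{n+1})$ for all $n$ simultaneously. Recognising that Artin-Rees applied to $N' \subseteq G$ delivers precisely this uniform annihilator, and that this is the correct handle on $\partial_2$, is the conceptual crux of the plan.
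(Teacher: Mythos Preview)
Your argument is correct. The Artin--Rees bounds on $K\subseteq F$ and on $N'\subseteq G$ are exactly what is needed: the first forces $\image\partial_1^{ms}=0$ for $m\in\m^{c}$ and $n\gg 0$, and the second gives a uniform annihilator $\m^{c'}$ of $\Tor^A_1(N,A/\m^{n+1})$, whence $\image\partial_2^{ms}=m\cdot\image\partial_2^{s}=0$ for $m\in\m^{c'}$. Since $c$ and $c'$ depend only on the chosen presentations of $M$ and $N$, the resulting $k=\max(c,c')$ is independent of $s$, and $\m^{k}\Ext^1_A(M,N)\subseteq T_A(M,N)$ follows.

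This is a genuinely different route from the paper's. The result quoted here is proved in part~I \cite{Pu trn}, and the analogue established in the present paper for the integral closure filtration (Theorem~\ref{main1}) proceeds by induction on $\dim A$: the base case uses reducedness to force $\Ext^1_A(M,N)$ itself to have finite length, and the inductive step passes to $A/(\zeta)$ for a carefully chosen $\mathcal{F}$-superficial element $\zeta$, after flat base changes needed to guarantee such $\zeta$ exists and that the quotient remains analytically unramified. Your approach sidesteps all of this machinery---no induction, no superficial elements, no hypotheses on the residue field, no base change---by extracting the defect $e^T_A(s)$ directly from the two connecting maps and killing each with a single Artin--Rees constant. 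What the inductive method buys is a template that transports to other admissible filtrations once the base changes are in place; what your method buys is a short, self-contained proof in the $\m$-adic case that makes the uniform bound on $\m^{k}$ completely explicit.
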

Note Theorem \ref{fl} has no content if $M$ is free on the punctured spectrum of $A$. One of our motivations of this paper was to investigate $T_A(M, N)$ when $M$ is free on the punctured spectrum of $A$.

Now assume $(A,\m)$ is Henselian and  $M$ is an indecomposable MCM $A$-module with $M_{\mathfrak{p}}$ is free for all $\mathfrak{p}\in $ Spec$^0(A)=$ Spec$(A)\setminus \{\mathfrak{m}\}$,  then
a fundamental short exact sequence known as the Auslander-Reiten (AR)-sequence ending at $M$ exists. For a good introduction to AR-sequences see \cite[Chapter 2]{Yoshino}.

The following result gives a large number of  examples of AR-sequences which are $T$-split.
\begin{theorem}\label{cx2}
	Let $(Q,\mathfrak{n})$ be a Henselian regular local ring and $\underline{f} = f_1,\ldots,f_c\in \mathfrak{n}^2$ a regular sequence.  Set $I=(f_1,\ldots,f_c)$ and $(A,\mathfrak{m})= (Q/I,\mathfrak{n}/I)$. Assume $\dim A = 1$. Let $M$ be an indecomposable MCM $A$-module with $\cx_A M\geq 2$. Assume $M$ is free on {\normalfont Spec}$^0(A)$. Set $M_n= \Syz_n^A(M)$, then for $n\gg 0$ the AR-sequences ending in $M_n$ are $T$-split.
\end{theorem}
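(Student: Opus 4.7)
My plan is to combine three ingredients: a $\Tor$-theoretic reformulation of $T$-splitness in dimension one, the identification of the AR-translate with the first syzygy in the Gorenstein setting, and the polynomial growth of syzygies over a complete intersection of complexity $\geq 2$.

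First, for $d = 1$ one has $e^T_A(X) = \ell\big(\Tor^A_1(X, A/\mathfrak{m}^{n+1})\big)$ for $n \gg 0$. Applying $-\otimes A/\mathfrak{m}^{n+1}$ to a short exact sequence of MCMs $s \colon 0 \to N \to E \to M \to 0$ and chasing the long exact sequence of $\Tor$, one obtains, eventually in $n$,
\[
e^T(s) \;=\; \ell\!\big(\image \partial_n\big) \;+\; \ell\!\big((N \cap \mathfrak{m}^{n+1}E)/\mathfrak{m}^{n+1}N\big),
\]
where $\partial_n \colon \Tor^A_2(M, A/\mathfrak{m}^{n+1}) \to \Tor^A_1(N, A/\mathfrak{m}^{n+1})$ is the connecting map (Yoneda product with $[s]$). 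Hence $s$ is $T$-split if and only if $\partial_n = 0$ for $n \gg 0$ and the inclusion $N \subseteq E$ is $\mathfrak{m}$-adically strict, i.e.\ $N \cap \mathfrak{m}^{n+1} E = \mathfrak{m}^{n+1} N$ for $n \gg 0$.

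Since $A$ is $1$-dimensional Gorenstein and Henselian, $\tau(X) \cong \Omega(X)$ in $\CMS(A)$ for every MCM $X$ free on $\operatorname{Spec}^0(A)$. So for an indecomposable summand $N$ of $M_n$, the AR-sequence ending at $N$ has the form $0 \to \Omega N \oplus A^{a} \to E \to N \to 0$ for some $a \geq 0$; its class corresponds, via Yoneda together with the autoequivalence $\Omega$ of $\CMS(A)$, to a socle element $\eta \in \underline{\operatorname{End}}(N)$ (characterized by $\operatorname{rad}(\underline{\operatorname{End}}(N)) \cdot \eta = 0$, $\eta \neq 0$, uniquely up to scalar since $\underline{\operatorname{End}}(N)$ is local artinian). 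The two $T$-split conditions then become: $\eta$ acts as zero on $\Tor^A_1(N, A/\mathfrak{m}^{n+1})$, and the inclusion $\Omega N \hookrightarrow E$ is $\mathfrak{m}$-adically strict, for $n \gg 0$.

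The hypothesis $\cx_A M \geq 2$ forces $c \geq 2$ and, by Gulliksen and Avramov-Buchweitz-Gasharov-Peeva, $\Ext^*_A(M, M)$ is finitely generated over the polynomial ring $k[\chi_1, \ldots, \chi_c]$ of cohomology operators, with the Betti numbers of $M$ growing polynomially of degree $\geq 1$. The socle condition combined with this unbounded growth should permit choosing Yoneda representatives of $\eta$ at stage $n$ with image contained in $\mathfrak{m}^{k_n} N$ where $k_n \to \infty$; Artin-Rees then forces both $T$-split conditions for $n \gg 0$. The main obstacle is this last quantitative step: producing lifts of $\eta$ with high $\mathfrak{m}$-adic order on large syzygies. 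I expect it to use an explicit criterion from part 1 of the paper characterizing $T$-splitness via such $\mathfrak{m}$-adic depth data, combined with the Eisenbud-Peeva matrix-factorization description of resolutions over a codim-$\geq 2$ complete intersection, where polynomial growth of Betti numbers translates into controllable $\mathfrak{m}$-adic filtrations on cycles.
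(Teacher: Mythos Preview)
Your proposal has a genuine gap at exactly the point you flag: the ``quantitative step'' of producing lifts of the socle element $\eta$ with high $\mathfrak{m}$-adic order is not carried out, and I do not see how to complete it along the lines you suggest. The socle element of $\underline{\operatorname{End}}(M_n)$ has no a priori uniform relationship to the $\mathfrak{m}$-adic filtration as $n$ varies, and neither Artin--Rees nor the Avramov--Gasharov--Peeva growth results give you direct control over where such representatives land inside $M_n$.

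The paper's proof avoids this difficulty entirely by not attacking the AR-sequence directly. It first proves a general reduction (Theorem~\ref{AR-Tsplit}): if \emph{any} non-split short exact sequence of MCMs ending in an indecomposable $X$ is $T$-split, then the AR-sequence ending in $X$ is $T$-split. This is a short consequence of the universal property of AR-sequences together with the stability of $T$-splitness under pushout and pullback. With that reduction in hand, the task becomes: for $r \gg 0$, exhibit \emph{some} non-split $T$-split sequence ending in $M_r$. The paper does this via Eisenbud operators. A generic linear combination $\xi$ of $t_1,\ldots,t_c$ acts surjectively on $\Tor^A_i(A/\mathfrak{m}^j, M)$ for all $i \geq i_0$ and all $j \geq 1$; the point is that in dimension one $\Tor^A_i(A/\mathfrak{m}^j, M)$ stabilizes in $j$ once $j \geq \operatorname{red}(A)$, so only finitely many $j$ need to be handled before invoking Eisenbud's $*$-Artinian lemma, and the rest follow by the stabilization isomorphisms. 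This $\xi$ then induces short exact sequences $\alpha_r \colon 0 \to K_r \to M_{r+2} \to M_r \to 0$ with $\cx K_r \leq \cx M - 1$. The $\Tor$-surjectivity for all $j$ is precisely your condition ``$\partial_n = 0$ and the inclusion is $\mathfrak{m}$-adically strict'', so $\alpha_r$ is $T$-split; and $\cx M \geq 2$ forces $M_{r+2} \ncong M_r$, so $\alpha_r$ is non-split.

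In short, your $\Tor$-reformulation of $T$-splitness is correct and is essentially what the paper uses at the end, but the strategic move you are missing is the reduction via Theorem~\ref{AR-Tsplit}: build an auxiliary $T$-split sequence out of the Eisenbud operators rather than trying to analyze the AR-sequence itself.
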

For hypersurfaces defined by quadrics we prove:
\begin{theorem}
\label{quadric} Let $(Q,\mathfrak{n})$ be a Henselian regular local ring with algebraically closed residue field $k = Q/\mathfrak{n}$  and let $f \in \mathfrak{n}^2 \setminus \mathfrak{n}^3$. Assume the hypersurface $A  = Q/(f)$ is an isolated singularity.
Then all but finitely many AR-sequences in $A$ are $T$-split.
\end{theorem}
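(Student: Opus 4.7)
\emph{Proof plan.} The natural route is via the structure theory of MCM modules over a quadric hypersurface. Note that Theorem \ref{cx2} does not apply here, since every MCM module over a hypersurface has complexity at most one, so an independent argument is required. My strategy is to establish finite Cohen--Macaulay type for $A$: once this is in place, $\CMa(A)$ contains only finitely many AR-sequences, and the assertion that all but finitely many are $T$-split becomes automatic.

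First I would reduce to a normal form. The Henselian hypothesis and the definition of $e^T_A$ via lengths of $\Tor^A_1(M, A/\m^{n+1})$ behave well under $\m$-adic completion, so we may replace $Q$ by the formal power series ring $k[[x_1,\ldots,x_n]]$. The assumption $f \in \n^2 \setminus \n^3$ says $f$ has order exactly two, and the isolated singularity hypothesis forces the quadratic part of $f$ to be non-degenerate. Since $k = \ov{k}$ (and assuming characteristic $\ne 2$), we may diagonalize to $f = x_1^2 + \cdots + x_n^2$, after which $A$ is the standard $A_1$-singularity of dimension $n-1$.

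Next I would invoke Eisenbud's matrix factorization theorem together with the classical identification, due to Buchweitz--Eisenbud--Herzog and Kn\"orrer, of the category of matrix factorizations of a non-degenerate quadric with the category of modules over the Clifford algebra $C(f)$. Over an algebraically closed field $C(f)$ is a matrix algebra (when $n$ is even) or a product of two matrix algebras (when $n$ is odd); in either case it is semisimple with at most two simple modules. Translating back, this means $A$ has at most two indecomposable non-free MCM modules up to isomorphism. Since $A$ is an isolated singularity, each such module is free on $\Spec^0(A)$ and therefore admits a unique AR-sequence ending at it, so the set of AR-sequences in $\CMa(A)$ is finite.

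The theorem then follows at once: a finite set cannot contain more than finitely many non-$T$-split AR-sequences, so in fact the conclusion holds vacuously. I expect the main obstacle to be purely bookkeeping: confirming that the completion and the coordinate change do not distort AR-sequences or the value of $e^T_A$, and being careful about how the Clifford algebra correspondence identifies indecomposables (in particular keeping track of the free rank-one module, which sits outside the simple $C(f)$-modules). Once these identifications are made cleanly, no further computation of $e^T_A$ is needed, and the theorem drops out of the classical structure theory.
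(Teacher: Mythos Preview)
Your argument has a fatal gap: the assertion that ``the isolated singularity hypothesis forces the quadratic part of $f$ to be non-degenerate'' is false. The condition $f \in \n^2 \setminus \n^3$ only says that the quadratic part of $f$ is nonzero, not that its rank equals $\dim Q$. For instance, $f = x^2 + y^3 + z^7 \in k[[x,y,z]]$ has order exactly two and defines an isolated surface singularity, yet its quadratic part $x^2$ has rank one. By the Buchweitz--Greuel--Schreyer classification this ring is not a simple (ADE) singularity, so it has \emph{infinite} CM representation type. Thus your reduction to a non-degenerate quadric, the Clifford-algebra count of indecomposables, and the vacuous conclusion all collapse. The paper itself signals this: its proof opens by assuming $A$ has infinite CM type, treating that as the substantive case.

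The paper's actual argument is entirely different and works uniformly. Because $f$ has order two, $A$ has minimal multiplicity with $e_0(A)=2$ and $e_1(A)=1$; consequently every MCM module $M$ with no free summand is Ulrich and one computes $e^T(M)=\mu(M)$ directly from the formula $e^T(M)=e_1(A)\mu(M)-e_1(M)-e_1(\Syz^A_1 M)$. Given an AR-sequence $s\colon 0\to \tau(M)\to V\to M\to 0$ with $V$ Ulrich, one then gets $e^T(s)=\mu(M)+\mu(\tau(M))-\mu(V)$, which vanishes whenever $\mu(V)=\mu(M)+\mu(\tau(M))$. The paper shows this additivity of $\mu$ holds for all but finitely many $M$: since the AR-quiver is locally finite, only finitely many indecomposables are adjacent to the free module, and away from these the cited result \cite[7.11]{Pu-Ar} gives the required equality. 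No finiteness of CM type is invoked anywhere.
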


Theorems \ref{cx2} and \ref{quadric} shows that $T$-split sequences are abundant in general. However the following example is important:
\begin{example}\label{counter}
  There exists a complete hypersurface isolated singularity $A$ and an indecomposable MCM $A$-module $M$ such that $T_A(M, N) = 0$ for any MCM $A$-module $N$.
\end{example}

\s Now assume $A$ is Gorenstein. As observed in \cite{Pu_stable} the function $e^T_A(-)$ is infact a function on $\CMS(A)$ the stable category  of all MCM $A$-modules.
Let $M$ and $N$ be  MCM $A$-modules.
	It is well-known that we have a natural isomorphism $$\eta \colon \text{\underline{Hom}}_A(M,N)\cong \text{Ext}^1_A(\Omega^{-1}(M),N)$$
	 Let $T_A(\Omega^{-1}(M),N)$ denotes the set of all $T$-split sequences in Ext$^1_A(\Omega^{-1}(M),N)$. We denote $\eta^{-1}(T_A(\Omega^{-1}(M),N))$ by $\mathcal{R}(M,N)$. Then $\eta $ induces following isomorphism
	$$\frac{\text{\underline{Hom}}_A(M,N)}{\mathcal{R}(M,N)}\cong \frac{\text{Ext}^1_A(\Omega^{-1}(M),N)}{T_A(\Omega^{-1}(M),N)}.$$
Suprisingly
\begin{proposition}\label{relation}
	$\mathcal{R}$ is a relation on $\CMS(A)$.
\end{proposition}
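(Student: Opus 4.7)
The plan is to unwind what it means for $\mathcal{R}$ to be a relation on $\CMS(A)$: concretely, $\mathcal{R}$ is a relation (equivalently, a sub-bifunctor of $\underline{\Hom}_A(-,-)$) provided each $\mathcal{R}(M,N)$ is a subgroup of $\underline{\Hom}_A(M,N)$ and provided $\mathcal{R}$ is stable under precomposition by any $g \in \underline{\Hom}_A(M',M)$ and postcomposition by any $h \in \underline{\Hom}_A(N,N')$ in $\CMS(A)$. I would obtain both properties by transporting the corresponding properties of $T_A$ across the natural isomorphism~$\eta$.

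For the subgroup property, note that $\eta$ is an isomorphism of abelian groups and that $T_A(\Omega^{-1}(M),N)$ is a subgroup of $\Ext^1_A(\Omega^{-1}(M),N)$ because $T_A$ is a sub-functor of $\Ext^1_A(-,-)$; hence $\mathcal{R}(M,N) = \eta^{-1}(T_A(\Omega^{-1}(M),N))$ is a subgroup of $\underline{\Hom}_A(M,N)$.

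The substantive step is the naturality of $\eta$ as an isomorphism of bifunctors on $\CMS(A)$. Naturality in the second argument is immediate from the construction of $\eta$ via the long exact sequence of $\Ext^{*}_A(-,N)$ applied to a short exact sequence $0 \rt M \rt P \rt \Omega^{-1}(M) \rt 0$ with $P$ free. Naturality in the first argument uses the fact that, since $A$ is Gorenstein, $\Omega^{-1}$ is a well-defined (auto-)functor on $\CMS(A)$ (it is the suspension of its triangulated structure). A morphism $g \in \underline{\Hom}_A(M',M)$ then lifts to a morphism of fundamental sequences inducing $\Omega^{-1}(g)\colon \Omega^{-1}(M') \rt \Omega^{-1}(M)$ in $\CMS(A)$, and one checks on representatives that
\[
\eta(h \circ f \circ g) \;=\; h_{*}\,(\Omega^{-1}(g))^{*}\,\eta(f)
\]
for every $f \in \underline{\Hom}_A(M,N)$.

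With this in hand, the closure under composition falls out immediately: if $f \in \mathcal{R}(M,N)$, i.e.\ $\eta(f) \in T_A(\Omega^{-1}(M),N)$, then since $T_A$ is a sub-bifunctor of $\Ext^{1}_A(-,-)$ both the pullback $(\Omega^{-1}(g))^{*}$ and the pushforward $h_{*}$ send $\eta(f)$ into $T_A(\Omega^{-1}(M'),N')$, so $\eta(h\circ f\circ g) \in T_A(\Omega^{-1}(M'),N')$ and thus $h\circ f\circ g \in \mathcal{R}(M',N')$. This proves $\mathcal{R}$ is a relation on $\CMS(A)$. The only point that requires genuine care, rather than real difficulty, is the verification of the naturality of $\eta$ in the first argument at the level of the stable category; all other ingredients are direct consequences of the sub-functor property of $T_A$.
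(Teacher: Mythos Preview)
Your proposal is correct and follows the same underlying idea as the paper: closure of $\mathcal{R}$ under composition is inherited from closure of $T_A$ under push-forward and pull-back, transported through $\eta$. The difference is purely one of packaging. You invoke the naturality of $\eta$ in both variables (in particular, the identity $\eta(h\circ f\circ g) = h_*\,(\Omega^{-1}(g))^*\,\eta(f)$) together with the sub-bifunctor property of $T_A$ as a black box. The paper instead works concretely inside the triangulated structure of $\CMS(A)$: for $u\in\mathcal{R}(M,N)$ and $f\colon M_1\to M$ it uses axiom (TR3) to produce a fill-in $C(u\circ f)\to C(u)$ between the cone triangles, lifts this to a commutative diagram of genuine short exact sequences (after adding free summands), and then applies the pull-back lemma \cite[3.9]{Pu trn} directly; dually for postcomposition via \cite[3.8]{Pu trn}. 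Your route is cleaner once naturality of $\eta$ in the first variable is in hand; the paper's route has the minor advantage that it never needs to identify the induced map on cosyzygies with $\Omega^{-1}(f)$, since any fill-in supplied by (TR3) already gives a diagram to which the push-out/pull-back lemmas apply.
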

Thus we may consider the quotient category $\mathcal{D}_A = \CMS(A)/\mathcal{R}$. Clearly $\mathcal{D}_A$ is a Hom-finite additive category. Surprisingly the following result holds
\begin{theorem}\label{krull-schmidt}
	Let $(A,\mathfrak{m})$ be a Henselian Gorenstein local ring and let $M$ and $N$ be MCM $A$-modules.  Then the following holds
\begin{enumerate}[\rm (1)]
  \item  $M\cong N$ in $\mathcal{D}_A$ if and only if $M\cong N$ in $\CMS(A)$.	
  \item $M$ is indecomposable in $\mathcal{D}_A$ if and only if $M$ is indecomposable in $\CMS(A)$
  \item $\mathcal{D}_A$ is a Krull-Remak-Schmidt (KRS) category.
\end{enumerate}
\end{theorem}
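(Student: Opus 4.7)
The plan is to reduce the theorem to a single key property: $\mathcal{R}$ is an ideal of the stable category $\CMS(A)$ contained in its Jacobson radical. Once this is established, parts (1)-(3) follow from the general theory of quotients of Krull-Remak-Schmidt categories by such ideals. That $\mathcal{R}$ is an ideal (i.e., a sub-bifunctor of $\underline{\Hom}_A(-,-)$) is functorial bookkeeping: since $T_A$ is a sub-bifunctor of $\Ext^1_A(-,-)$ and the isomorphism $\eta$ is natural in both variables (using that $\Omega^{-1}$ is an autoequivalence of $\CMS(A)$ when $A$ is Gorenstein), its preimage $\mathcal{R}$ inherits the sub-bifunctor property.

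The heart of the argument is to show $\mathcal{R}(M, M) \subseteq J(\underline{\operatorname{End}}_A(M))$ for every nonzero indecomposable $M$ in $\CMS(A)$. Since $A$ is Henselian, $\underline{\operatorname{End}}_A(M)$ is local, so this reduces to checking $1_M \notin \mathcal{R}(M, M)$. The definition of $\eta$ via pushout along the canonical sequence
\[
 0 \to M \to F \to \Omega^{-1}(M) \to 0,
\]
with $F$ free, identifies $\eta(1_M)$ with this very sequence. Its being $T$-split would force $e^T_A(M) + e^T_A(\Omega^{-1}M) = e^T_A(F) = 0$, but $e^T_A(X) = 0$ exactly when $X$ is free, contradicting $M \ne 0$ in $\CMS(A)$. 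Extending from indecomposables to arbitrary objects via additivity of $\mathcal{R}$ and Krull-Remak-Schmidt decomposition in $\CMS(A)$ yields $\mathcal{R} \subseteq \operatorname{rad}_{\CMS(A)}$.

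From this containment the three parts follow mechanically. For (1), an isomorphism $\bar{f}$ in $\mathcal{D}_A$ lifts to some $f \colon M \to N$ in $\CMS(A)$, together with $g$ lifting its inverse, so that $gf - 1_M$ and $fg - 1_N$ lie in $\mathcal{R}$ and hence in the radical; thus $gf$ and $fg$ are units in their respective endomorphism rings, making $f$ an isomorphism in $\CMS(A)$. For (2), the endomorphism ring of an indecomposable $M$ in $\mathcal{D}_A$ equals $\underline{\operatorname{End}}_A(M)/\mathcal{R}(M,M)$, a quotient of a local ring by a proper ideal and hence local; conversely, a nontrivial $\CMS(A)$-decomposition remains nontrivial in $\mathcal{D}_A$ because no nonzero summand can become zero. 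For (3), existence of decompositions into indecomposables is inherited from $\CMS(A)$, and uniqueness follows by lifting idempotents across $\underline{\operatorname{End}}_A(M) \twoheadrightarrow \underline{\operatorname{End}}_A(M)/\mathcal{R}(M,M)$, valid since $\underline{\operatorname{End}}_A(M)$ is semi-perfect and the kernel lies in its Jacobson radical.

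The main obstacle is the explicit identification of $\eta(1_M)$ together with the key input that $e^T_A(X)$ vanishes precisely for free $X$; once this is in hand, the rest is formal manipulation within the Krull-Remak-Schmidt framework.
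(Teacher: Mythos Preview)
Your approach is essentially the same as the paper's. The paper also proves the key lemma $\mathcal{R}(M,M)\subseteq\operatorname{Jac}(\underline{\operatorname{End}}_A(M))$ by the same $e^T$-computation on the cone (taking an arbitrary invertible $u\in\mathcal{R}(M,M)$ rather than reducing to $u=1_M$, but this is immaterial), then derives (1)--(3) from it exactly as you do; the only cosmetic difference is that the paper extends from indecomposables to general $M$ via explicit $2\times 2$ block-matrix computations where you invoke the categorical radical and additivity.
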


The main application of $T$-split sequences was to study  {\it Weak Brauer-Thrall-II} for associated graded modules for a large class of rings.
Note that in \cite{Pu trn} the concept was introduced only for $\mathfrak{m}$-adic filtration, but for general $I(\ne \mathfrak{m})$-adic filtrations that method will not work (see \cite[Remark 3.2]{Pu trn}).

In this article we extend the results in \cite{Pu trn} to a large family of filtrations.
Let $(A,\mathfrak{m})$ be an  analytically unramified \CM \ local ring of dimension $d \geq 1$ and let $I$ be an $\m$-primary ideal. Let $\mathcal{F}=\{I_n\}_{n \in \mathbb{Z}}$ where $I_1=\overline{I}$ and $I_n=\overline{I^{n}}$ for $n\gg 0$ be an $I$-admissible filtration. Let $M$ be an MCM $A$-module.
 \s \label{ankit} Set
$$ e_{\mathcal{F}}^T(M) = \lim_{n\to \infty}\frac{(d-1)!}{n^{d-1}}\ell(\Tor_1^A(M,A/{I_{n+1}})) $$
Then $e_{\mathcal{F}}^T(M) = 0$ if and only if $M$ is free (see \cite[Theorem 7.5]{Kadu}).
Let $M,N$ be maximal Cohen-Macaulay $A$-modules and  $\alpha\in $ Ext$_A^1(M,N)$. Let $\alpha$ be given by an extension $0\rt N\rt E\rt M\rt 0$, here $E$ is an maximal Cohen-Macaulay module.
 Now set
$$e_{\mathcal{F}}^T(\alpha)=e_{\mathcal{F}}^T(M)+e_{\mathcal{F}}^T(N)-e_{\mathcal{F}}^T(E).$$
It can be shown that $e_{\mathcal{F}}^T(\alpha) \geq 0$, see \ref{sub-add}.
\begin{definition}
	An extension $s\in \Ext_A^1(M,N)$ is $T_{\mathcal{F}}$-split if $e_{\mathcal{F}}^T(s)=0$.
\end{definition}
As before we can show that $\T(M,N)$ is a submodule of $\Ext^1_A(M,N)$ (see \ref{TsubmoduleExt}). Furthermore $\T \colon \CMa(-)\times \CMa(-) \rt \modA(A)$ is a sub-functor of $\Ext^1_A(-,-)$.,
 see \ref{Functor}.

\begin{theorem}\label{main-intro}
	Let $(A,\mathfrak{m})$ be an analytically unramified  Cohen-Macaulay local ring of dimension $d$ with one of the following conditions:
	\begin{enumerate}[ \rm(1)]
		\item 	 the residue field $k (= A/\mathfrak{m})$ is uncountable.
		\item   the  residue field $k$ is perfect field.
	\end{enumerate}
 Let $I$ be an  $\mathfrak{m}$-primary ideal and $\mathcal{F}=\{{I_n}\}_{n \in \mathbb{Z}}$ where $I_1=\overline{I}$ and $I_n= \overline{I^{n}}$ for $n\gg0$. Let $M, N$ be MCM  $A$-module then $\Ext_A^1(M,N)/T_{\mathcal{F}}(M,N)$ has finite length.	
\end{theorem}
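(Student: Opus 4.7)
The plan is to show that there exists $k\geq 0$ with $\m^k\Ext^1_A(M,N)\subseteq \T(M,N)$; since $\Ext^1_A(M,N)$ is a finitely generated $A$-module, this immediately yields the finite length conclusion. For a given extension $\alpha:0\to N\to E\to M\to 0$ and $a\in \m^k$, the pushout $a\cdot\alpha:0\to N\to E_a\to M\to 0$ fits in the short exact sequence
\[ 0\longrightarrow N \xrightarrow{(a,-i)} N\oplus E\longrightarrow E_a\longrightarrow 0, \]
and applying $-\otimes_A A/I_{n+1}$ yields a long exact Tor sequence relating $\ell(\Tor_1^A(E_a,A/I_{n+1}))$ to $\ell(\Tor_1^A(N,A/I_{n+1}))$, $\ell(\Tor_1^A(E,A/I_{n+1}))$, and kernels and cokernels of maps depending on $a$. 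The target is to extract from this, as $n\to\infty$, the equality $e_\mathcal{F}^T(E_a)=e_\mathcal{F}^T(M)+e_\mathcal{F}^T(N)$, which is precisely the $T_\mathcal{F}$-splitness of $a\cdot\alpha$.

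The strategy follows the $\m$-adic proof of Theorem \ref{fl} from part 1, with two essential modifications for the integral closure filtration. First, I would reduce to $d=1$ by quotienting out a superficial sequence $x_1,\ldots,x_{d-1}\in I_1$ for $\mathcal{F}$ that is simultaneously regular on $A$, $M$, $N$, and the middle modules $E$ of a finite generating set of $\Ext^1_A(M,N)$. The two residue-field hypotheses each enable this choice: when $k$ is uncountable, prime avoidance over the countably many associated primes of the modules involved succeeds directly; when $k$ is perfect, one passes to a faithfully flat Nagata-type extension $A\hookrightarrow A'$ that enlarges $k$ to an infinite perfect field while preserving the analytically unramified, Cohen-Macaulay, and integral closure filtration structure, and then descends the conclusion. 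After reduction, $\mathcal{F}$ induces an integral closure type filtration on $\bar A=A/(x_1,\ldots,x_{d-1})$, $e_\mathcal{F}^T$ rescales predictably, and the $T_\mathcal{F}$-split property is preserved.

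The heart of the argument is the dimension $1$ case, in which $e_\mathcal{F}^T(M)$ equals the eventual stable value of $\ell(\Tor_1^A(M,A/I_{n+1}))$. The essential new input, absent from the $\m$-adic setting, is an Artin-Rees type estimate for the integral closure filtration: there exists $c\geq 0$, depending on the chosen free covers and syzygies of the modules, with $I_{n+1}F\cap K\subseteq I_{n+1-c}K$ for all $n\geq c$ and the relevant pairs $(F,K)$. This follows from the Noetherianness of the extended Rees algebra $\bigoplus_{n\in\mathbb{Z}} I_n t^n$, which is guaranteed by the analytically unramified hypothesis on $A$. Combined with the long exact Tor sequence above, and a choice of $a\in \m^k$ with $k$ exceeding both $c$ and a bound controlling the action of $a$ on the Tor modules, this gives the desired asymptotic equality.

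The principal obstacle is this dimension $1$ estimate. Unlike the $\m$-adic filtration, $\{\ov{I^n}\}$ is not strongly multiplicative (one has $\ov{I^n}\cdot\ov{I^m}\subseteq\ov{I^{n+m}}$ with possibly strict inclusion), so the Artin-Rees type estimate must be extracted through the finite Rees-algebra structure rather than from a simple multiplicative iteration, and its propagation through the long exact Tor sequence must be tracked carefully to isolate the leading term. A secondary subtlety is verifying that the perfect residue field hypothesis genuinely suffices for the superficial-reduction step, since the Nagata-style base change must be performed while retaining both analytic unramifiedness and the shape of the integral closure filtration.
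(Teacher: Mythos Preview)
Your proposal has the right overall shape but contains a genuine gap in the reduction step, and misidentifies where the difficulty lies.

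\textbf{The dimension $1$ case is trivial, not the heart.} When $d=1$, an analytically unramified local ring is reduced, so for every non-maximal prime $\mathfrak{p}$ the localization $A_\mathfrak{p}$ is a field; hence $M_\mathfrak{p}$ is free and $\Ext^1_A(M,N)_\mathfrak{p}=0$. Thus $\Ext^1_A(M,N)$ itself has finite length, and the quotient by $\T(M,N)$ a fortiori has finite length. No Artin--Rees estimate or Tor analysis is needed. The paper's proof in this case is exactly one line.

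\textbf{The real obstruction is the inductive step.} You propose to quotient out a superficial sequence $x_1,\ldots,x_{d-1}\in I_1$ chosen regular on finitely many modules, and claim that $\mathcal{F}$ induces an integral closure type filtration on $\bar A=A/(x_1,\ldots,x_{d-1})$. This is precisely what fails in general: for a superficial element $x$, the quotient $A/(x)$ need \emph{not} be analytically unramified, and the image filtration $\{I_n\cdot(A/(x))\}$ need \emph{not} satisfy $I_n\cdot(A/(x))=\overline{I^n(A/(x))}$ for $n\gg 0$. The paper explicitly flags this issue (see \ref{BS}.II.(iii)) and resolves it by passing, at each inductive step, to a flat extension $B=\hat A[X_1,\ldots,X_t]_{\mathfrak{m}\hat A[X_1,\ldots,X_t]}$ and cutting by a \emph{general} element $\zeta=\sum r_iX_i$; results of Ciuperc\u{a} and Kadu--Puthenpurakal then guarantee that $C=B/\zeta B$ is analytically unramified with $I_nC=\overline{I^nC}$ for $n\gg 0$. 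Your prime-avoidance argument, however refined, does not supply these two conclusions.

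\textbf{Why the uncountable hypothesis actually enters.} It is not used merely to pick a superficial element for finitely many modules. The paper fixes $\alpha$ and $a\in I$ and must choose $\zeta$ superficial simultaneously for the \emph{countably} many middle terms $E_n$ of $a^n\alpha$ (and their syzygies), because the bound $n_0$ for which $a^{n}\alpha$ becomes $\T$-split is obtained only after descending to $C$ and invoking the inductive hypothesis there. Handling this countable family is what forces the residue field to be uncountable. In the perfect case, the paper does not merely enlarge $k$ to an infinite field; it passes to $A[[X]]_{\mathfrak{m}A[[X]]}$, whose residue field $k((X))$ is uncountable, and uses perfectness of $k$ to ensure this extension is regular and hence behaves well with respect to integral closure. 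Your proposed Nagata extension to an infinite (but possibly countable) perfect field would not suffice.
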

Next we prove following theorem.
\begin{theorem}\label{BT-intro}
	Let $(A,\mathfrak{m})$ be a complete  reduced \CM\ local ring of dimension $d\geq1$ with either uncountable residue field or a perfect residue field. Let $I$ be an $\mathfrak{m}$-primary ideal. Set $R=A[[X_1,\ldots,X_m]]$, $J=(I,X_1,\ldots,X_m)$, $\mathcal{I}=\{\overline{I^n}\}_{n \in \mathbb{Z}}$, and $\mathcal{J}=\{\overline{J^n}\}_{n \in \mathbb{Z}}$. If $A$ has an MCM module $M$ with $G_{\mathcal{I}}(M)$ \CM. Then there exists $\{ E_n \}_{n \geq 1}$ indecomposable MCM $R$-modules with bounded multiplicity (with respect to $\mathcal{J}$) and having
$G_{\mathcal{J}}(E_n)$ \CM \ for all $n \geq 1$.
\end{theorem}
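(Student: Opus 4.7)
The strategy is to adapt the Weak Brauer--Thrall~II construction of \cite{Pu trn} (carried out there for the $\m$-adic filtration) to the integral closure filtration $\mathcal{J}$, using $\eT$ and $\T$ in place of their $\m$-adic analogues. First I would lift $M$ to an MCM $R$-module $\wt M := M[[X_1,\ldots,X_m]]$. Using the equality $\overline{J^n}\cap A = \overline{I^n}$ (which follows by reading off constant terms from an integrality equation over $R = A[[X_1,\ldots,X_m]]$), together with the fact that $X_1,\ldots,X_m$ is a regular sequence on $\wt M$ of $\mathcal{J}$-superficial elements, \CM-ness of $G_{\mathcal{I}}(M)$ transfers to \CM-ness of $G_{\mathcal{J}}(\wt M)$. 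Since $R$ is complete (hence Henselian) with the same residue field as $A$, AR-sequences exist over $R$ and Theorem~\ref{main-intro} applies to $R$-modules.

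I would produce the family $\{E_n\}$ as middle terms of $\T$-split AR-sequences. Let $M_n$ be a sequence of pairwise non-isomorphic indecomposable MCM $R$-modules, free on $\Spec^0(R)$, obtained from $\wt M$ via AR-translates (or via indecomposable stable summands of high syzygies $\Syz^R_n(\wt M)$). The associated graded modules $G_{\mathcal{J}}(M_n)$ remain \CM\ by standard reductions along $\mathcal{J}$-superficial sequences, and $e_{\mathcal{J}}(M_n)$ is controlled along the orbit. Theorem~\ref{main-intro} gives that for each $n$, $\Ext^1_R(M_n,\tau M_n)/\T(M_n,\tau M_n)$ has finite length, so by an argument analogous to the one establishing Theorem~\ref{cx2} the AR-sequence $s_n\colon 0 \to \tau M_n \to E_n \to M_n \to 0$ is $\T$-split for all sufficiently large $n$.

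Given $\T$-splitness of $s_n$, the equation $\eT(E_n) = \eT(\tau M_n) + \eT(M_n)$ combined with the Hilbert-coefficient interpretation of $\eT$ (see \ref{ankit} and \cite{Pu0}, \cite{Kadu}) should force $G_{\mathcal{J}}(E_n)$ to be \CM: the defect $\eT(s_n)$ records precisely the failure of a Hilbert-coefficient identity whose vanishing, given \CM-ness of $G_{\mathcal{J}}(\tau M_n)$ and $G_{\mathcal{J}}(M_n)$, is equivalent to \CM-ness of $G_{\mathcal{J}}(E_n)$. Bounded multiplicity follows from $e_{\mathcal{J}}(E_n) = e_{\mathcal{J}}(\tau M_n) + e_{\mathcal{J}}(M_n)$ together with the uniform bound on the summands; $E_n$ is automatically indecomposable as the AR middle term; and pairwise non-isomorphism of the $E_n$ inherits from that of the $M_n$.

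The hardest step will be transferring the implication ``$\T$-split $\Rightarrow$ associated graded \CM'' from the $\m$-adic setting of \cite{Pu trn} to the filtration $\mathcal{J}$: in part~1 this rests on Hilbert-coefficient identities and Sally-type arguments phrased for powers of the maximal ideal, and porting them to $\{\overline{J^n}\}$ requires the full theory of $I$-admissible filtrations in analytically unramified rings (finite generation of the associated integral closure Rees algebra is where the analytically unramified hypothesis enters) together with a careful re-derivation of the relevant Hilbert identities with integral closures in place of ordinary powers. A secondary concern is securing a uniform multiplicity bound on the orbit $\{M_n\}$; this may require restricting to an orbit of bounded complexity or exploiting eventual periodicity of AR-translates under the given hypotheses.
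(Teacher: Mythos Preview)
Your approach diverges substantially from the paper's and contains genuine gaps.

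The paper does not use AR-sequences for this theorem at all. Its argument runs through three ingredients in Section~5. First (Theorem~5.5), $\wt M := M\otimes_A R$ is MCM with $G_{\mathcal{J}}(\wt M)$ Cohen--Macaulay and, crucially, $\dim_R \Ext^1_R(\wt M,\wt M) > 0$. Second (Lemma~5.1), any $\T$-split self-extension $0\to \wt M \to E \to \wt M \to 0$ yields a short exact sequence $0\to G_{\mathcal{J}}(\wt M)\to G_{\mathcal{J}}(E)\to G_{\mathcal{J}}(\wt M)\to 0$, so $G_{\mathcal{J}}(E)$ is CM with $e^{\mathcal{J}}(E)=2e^{\mathcal{J}}(\wt M)$. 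Third (Proposition~5.2, used contrapositively), if only finitely many isomorphism classes of such $E$ occurred then $\T(\wt M,\wt M)$, and hence $\Ext^1_R(\wt M,\wt M)$ by Theorem~\ref{main-intro}, would have finite length---contradicting positive dimension. The indecomposable $E_n$ are then extracted as direct summands of the infinitely many $E$'s: the uniform multiplicity bound caps the number of indecomposable summands of each $E$, forcing infinitely many non-isomorphic indecomposable ones. No orbit, no AR-translation, no control of associated graded along a sequence is needed; one works entirely with self-extensions of the single module $\wt M$.

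Your AR-sequence route has several concrete problems. Most directly, the middle term of an AR-sequence is almost never indecomposable, so ``$E_n$ is automatically indecomposable as the AR middle term'' is false. Your claim that $s_n$ is $\T$-split ``by an argument analogous to Theorem~\ref{cx2}'' does not go through: that theorem relies on Eisenbud operators over a complete intersection and the resulting surjectivity on $\Tor$, none of which is available here; and the finite-length statement of Theorem~\ref{main-intro} says nothing about whether one \emph{particular} extension lies in $\T$. Finally, to invoke the ``$\T$-split $\Rightarrow$ associated graded CM'' step you need $G_{\mathcal{J}}(\tau M_n)$ (and $G_{\mathcal{J}}(M_n)$) to be CM, but CM-ness of the associated graded is not preserved under AR-translation or syzygy in general, and you provide no mechanism to propagate it along an orbit. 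The paper sidesteps all of these issues by never leaving the module $\wt M$ itself.
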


Let $e^T_\mathcal{F} \colon \Ext_A^1(M, N) \rt \mathbb{N}$ be the function defined by $\alpha \mapsto e^T_\mathcal{F}(\alpha)$. When $A$ has characteristic $p > 0$ then we can say more about this function.
If $V$ is a vector-space over a field $k$ then let $\mathbb{P}(V)$ denote the projective space determined by $V$.
\begin{theorem}\label{proj-intro}
(with hypotheses as in \ref{main-intro}) Further assume $A$ is of characteristic $p > 0$ and that $A$ contains a field $k \cong A/\m$. If $\Ext_A^1(M, N) \neq T_{\mathcal{F}}(M,N)$ then the function $e^T_\mathcal{F}$ factors as
$$ [\ov{e^T_A}] \colon \mathbb{P}(\Ext^1_A(M, N)/T_{\mathcal{F}}(M, N)) \rt \mathbb{N}\setminus{0}. $$
\end{theorem}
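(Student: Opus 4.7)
The plan is to verify the two quotient relations required for $e^T_{\mathcal{F}}\colon \Ext^1_A(M,N)\to \mathbb{N}$ to descend to a well-defined map from $\mathbb{P}(\Ext^1_A(M,N)/T_{\mathcal{F}}(M,N))$ to $\mathbb{N}\setminus\{0\}$. Namely, I would show: (a) $e^T_{\mathcal{F}}(\alpha+\beta)=e^T_{\mathcal{F}}(\alpha)$ for every $\alpha\in \Ext^1_A(M,N)$ and every $\beta\in T_{\mathcal{F}}(M,N)$; and (b) $e^T_{\mathcal{F}}(c\alpha)=e^T_{\mathcal{F}}(\alpha)$ for every $c\in k^{\ast}$. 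Combined with the defining property $e^T_{\mathcal{F}}(\alpha)=0$ if and only if $\alpha\in T_{\mathcal{F}}(M,N)$, together with the integrality of the limit defining $e^T_{\mathcal{F}}$, this produces the desired factorization with image in $\mathbb{N}\setminus\{0\}$.

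For (a), the technical input is subadditivity of $e^T_{\mathcal{F}}$ under Baer sums. Given representing sequences $\alpha\colon 0\to N\to E\to M\to 0$ and $\beta\colon 0\to N\to F\to M\to 0$, let $G$ be the middle term of $\alpha+\beta$ and form the pullback $P=E\times_M F$. By the depth lemma, $P$ is maximal Cohen-Macaulay, and one obtains two short exact sequences of MCM modules,
\[
0\to P\to E\oplus F\to M\to 0\quad\text{and}\quad 0\to N\to P\to G\to 0,
\]
the second coming from the canonical surjection $P\twoheadrightarrow G$ whose kernel is $\{(n,-n):n\in N\}\cong N$. Applying the subadditivity established in \ref{sub-add} along each of these sequences and combining gives
\[
e^T_{\mathcal{F}}(E)+e^T_{\mathcal{F}}(F)=e^T_{\mathcal{F}}(E\oplus F)\le e^T_{\mathcal{F}}(P)+e^T_{\mathcal{F}}(M)\le e^T_{\mathcal{F}}(N)+e^T_{\mathcal{F}}(G)+e^T_{\mathcal{F}}(M),
\]
which rearranges to the Baer-sum estimate $e^T_{\mathcal{F}}(\alpha+\beta)\le e^T_{\mathcal{F}}(\alpha)+e^T_{\mathcal{F}}(\beta)$. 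Taking $\beta\in T_{\mathcal{F}}$ yields $e^T_{\mathcal{F}}(\alpha+\beta)\le e^T_{\mathcal{F}}(\alpha)$. Since $T_{\mathcal{F}}(M,N)$ is an $A$-submodule of $\Ext^1_A(M,N)$ by \ref{TsubmoduleExt}, we have $-\beta\in T_{\mathcal{F}}$; applying the same inequality to the decomposition $\alpha=(\alpha+\beta)+(-\beta)$ yields the reverse estimate and hence equality.

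For (b), the hypothesis that $A$ contains a copy of $k=A/\m$ forces every $c\in k^{\ast}$ to be a unit in $A$; multiplication by $c$ on $N$ is then an automorphism, so the pushout of $\alpha$ along it produces an extension whose middle term is isomorphic to $E$. Hence $e^T_{\mathcal{F}}(c\alpha)=e^T_{\mathcal{F}}(\alpha)$, which is precisely the scalar invariance needed to descend the map to $\mathbb{P}$. The characteristic $p$ assumption enters in guaranteeing that the leading-coefficient function $e^T_{\mathcal{F}}$ is integer-valued in this filtered setting, so that once (a) and (b) are combined with the equivalence $\alpha\in T_{\mathcal{F}}\iff e^T_{\mathcal{F}}(\alpha)=0$, the induced function $[\overline{e^T_{\mathcal{F}}}]$ indeed lands in $\mathbb{N}\setminus\{0\}$. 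I expect the main obstacle to be the careful verification that the two short exact sequences produced from the Baer sum are genuine sequences of MCM modules (so that \ref{sub-add} applies to both), and the clean composition of the resulting estimates; after this, the argument reduces to a direct translation of the additive $\Ext$-structure into inequalities between leading Hilbert-type coefficients.
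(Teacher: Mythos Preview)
Your argument is correct, and in fact your treatment of step (a) is cleaner than the paper's. The paper establishes the one-sided inequality $e^T_{\mathcal F}(\alpha+\alpha')\le e^T_{\mathcal F}(\alpha')$ when $\alpha$ is $T_{\mathcal F}$-split (via the same pullback construction you use), and then obtains the reverse inequality by iterating: writing $\alpha' = p\alpha + \alpha'$ in characteristic $p$ and chaining
\[
e^T_{\mathcal F}(\alpha') \le e^T_{\mathcal F}((p-1)\alpha+\alpha') \le \cdots \le e^T_{\mathcal F}(\alpha+\alpha') \le e^T_{\mathcal F}(\alpha').
\]
So in the paper, the characteristic hypothesis is used precisely to close this loop. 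Your route---using that $T_{\mathcal F}(M,N)$ is a submodule so $-\beta\in T_{\mathcal F}$, and then applying subadditivity to $\alpha=(\alpha+\beta)+(-\beta)$---achieves the same equality with no appeal to the characteristic. Step (b) is identical in both approaches.

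One correction: your explanation of where the characteristic-$p$ hypothesis enters is wrong. The value $e^T_{\mathcal F}(\alpha)$ is always an integer, in any characteristic, since by \ref{eT-formula} it is a $\mathbb{Z}$-linear combination of first Hilbert coefficients. The hypothesis is present because the paper's proof of (a) needs it, not for integrality. Since your proof of (a) does not use it, you have in effect shown that the characteristic assumption in Theorem~\ref{proj-intro} is superfluous: only the hypothesis that $A$ contains a field isomorphic to $A/\mathfrak m$ (so that nonzero scalars act by automorphisms) is required for the factorization through $\mathbb{P}(\Ext^1_A(M,N)/T_{\mathcal F}(M,N))$.
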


We now describe in brief the contents of this paper. In section two we discuss some preliminary results. In section three we introduce our function \ref{ankit} and discuss few of its properties
We also discuss in detail the base changes that we need to prove our results. In section four we prove Theorem \ref{main-intro}. In the next section we prove Theorem \ref{BT-intro}.
In section six we prove Theorem \ref{proj-intro}. In the next section we discuss our result on relation between T-split sequences and AR-sequences. In section eight we prove Theorem \ref{cx2}.
In the next section we  prove Theorem \ref{quadric} and construct Example \ref{counter}.
In section ten we prove Proposition \ref{relation} and Theorem \ref{krull-schmidt}.
\section{Preliminaries}

\s Let $(A,\mathfrak{m})$ be a Noetherian local ring and $I$ be an $\mathfrak{m}$-primary ideal. Then a filtration $\mathcal{F}=\{F_n\}_{n\in \mathbb{Z}}$ is said to be $I$-admissible filtration if
\begin{enumerate}
	\item $I^n\sub F_n$ for all $n$.
    \item $F_nF_m \subseteq F_{n + m}$ for all $n,m \in \mathbb{Z}$.
	\item $F_n = IF_{n-1}$ for $n\gg0$.
\end{enumerate}
 \begin{definition}
 	A Noetherian local ring $(A,\mathfrak{m})$ is said to be analytically unramified if its $\mathfrak{m}$-adic completion is reduced.
 \end{definition}
\s Let $\overline{\mathfrak{a}}$ denote integral closure of the ideal $\mathfrak{a}$. If $A$ is analytically unramified then from  a result of Rees \cite{Rees}, the integral closure filtration $\mathcal{F}=\{\overline{I^n}\}_{n \in \mathbb{Z}}$ is $I$-admissible.
\s Let $(A,\mathfrak{m})$ be a Noetherian local ring, $I$ an $\mathfrak{m}$-primary ideal and $\mathcal{F}=\{F_n\}_{n\in \mathbb{Z}}$ a $I$-admissible filtration. Let $M$ be a finite $A$-module with dimension $r$. Then the numerical function $H_{\mathcal{F}}(M,n)= \ell(M/{F_{n+1}}M)$ is known as the Hilbert function of $M$ with respect to $\mathcal{F}$. For large value of $n$, $H_{\mathcal{F}}(M,n)$ coincides with a polynomial $P_{\mathcal{F}}(M,n)$ of degree $r$; and this polynomial known as the Hilbert polynomial of $M$ with respect to $\mathcal{F}$. There exist unique integer $e_0^{\mathcal{F}}(M),e_1^{\mathcal{F}}(M),\ldots,e_r^{\mathcal{F}}(M)$ such that Hilbert polynomial of $M$ with respect to $\mathcal{F}$ can be written as
$$P_{\mathcal{F}}(M,n)= \sum_{i=0}^{r}(-1)^ie_i^{\mathcal{F}}(M)\binom{n+r-i}{r-i}.$$
These integers $e_0^{\mathcal{F}}(M),e_1^{\mathcal{F}}(M),\ldots,e_r^{\mathcal{F}}(M)$ are known as the Hilbert coefficients of $M$ with respect to $\mathcal{F}$. In case of $\mathfrak{m}$-adic and  $I$-adic filtrations these coefficients will be denoted as $e_i(M)$ and  $e_i^I(M)$  for $i=1,\ldots,r$ respectively.
\s \label{superficial} Let $(A,\mathfrak{m})$ be a Noetherian local ring and $I$ be an $\mathfrak{m}$-primary ideal. Let $\mathcal{F}=\{F_n\}_{n\in \mathbb{Z}}$ be an $I$-admissible filtration and $M$ an $A$-module with positive dimension. Then an element $x\in F_1\setminus F_2$ is said to be $\mathcal{F}$-superficial element for $M$ if there exists $c\in \mathbb{N}$ such that for all $n\geq c$, $$(F_{n+1}M:_Mx)\cap F_cM= F_nM.$$

The following facts are well known:
\begin{enumerate}
	\item If $k= A/\mathfrak{m}$ is infinite, then $\mathcal{F}$-superficial elements for $M$ exist.
	\item If  $\depth M>0$ then every $\mathcal{F}$-superficial element for $M$ is also $M$-regular.
	\item If $x$ is $\mathcal{F}$-superficial element for $M$ and $\depth M > 0$ then $(F_{n+1}M:_Mx)= F_nM$ for $n\gg 0$.
     \item If $x$ is $\mathcal{F}$-superficial element for $M$ and $\depth M > 0$ then $e_i^{\ov{\mathcal{F}}} (M/xM) = e_i^{\mathcal{F}}(M)$ for $i = 0,1,\ldots, \dim M - 1$ (here $\ov{\mathcal{F}}$
         is the obvious quotient filtration of $\mathcal{F}$).
\end{enumerate}
\s A sequence $\underline{x}=x_1,\ldots,x_r$ with $r\leq \dim M$ is said to be $\mathcal{F}$-superficial sequence for $M$ if $x_1$ is $\mathcal{F}$-superficial element for $M$ and $x_i$ is  $\mathcal{F}/(x_1,\ldots,x_{i-1})$-superficial element for $M/(x_1,\ldots,x_{i-1})M$ for all $i\leq r$.

\section{The case when $A$ is analytically unramified}
 Let $(A,\mathfrak{m})$ be an \anram\ \CM \ local ring with $\dim A = d\geq 1$, $I$ an $\mathfrak{m}$-primary ideal.
We are primarily interested in the integral closure filtration of $I$. However to prove our results we need the following class of $I$-admissible filtrations $\mathcal{F}=\{{I_n}\}_{n \in \mathbb{Z}}$ where $I_1=\overline{I}$ and $I_n=\overline{I^n}$ for $n\gg 0$.
 Let $M$ be an MCM $A$-module.

\s The numerical function
$$n\longmapsto \ell(\Tor_1^A(M,A/{{I_{n+1}}}))$$
is polynomial type; that is there is a polynomial $t^A_{\mathcal{F}}(M,z)$ which coincides with this numerical function for $n\gg 0$.\\
If $M$ is non free MCM $A$-module then $\deg t^A_{\mathcal{F}}(M,z)=d-1$ (see \cite[Theorem 7.5]{Kadu}). Note that normalised leading coefficient of $t^A_{\mathcal{F}}(M,z)$ is $e_1^{\mathcal{F}}(A)\mu(M)-e_1^{\mathcal{F}}(M)-e_1^{\mathcal{F}}(\text{Syz}_1^A(M))$.

\s \label{eT-formula} Set
\begin{align*}
e_{\mathcal{F}}^T(M)&=\lim_{n\to \infty}\frac{(d-1)!}{n^{d-1}}\ell(\Tor_1^A(M,A/{I_{n+1}}))\\
&=e_1^{\mathcal{F}}(A)\mu(M)-e_1^{\mathcal{F}}(M)-e_1^{\mathcal{F}}(\text{Syz}_1^A(M))
\end{align*}

\s\label{BS} \textbf{Base-change:}
We need to do several base changes in our arguments.

(I) We first discuss the general setup:
Let $\psi \colon (A,\m)\rt (B,\n)$ be a flat map such that $B$ is also a  \CM \ local ring and $\m B =\n$.
If $M$ is an $A$-module set $M_B= M\otimes_A B$. If $\mathcal{F}=\{{I_n}\}_{n \in \mathbb{Z}}$ is an $I$-admissible filtration then set  $\mathcal{F}_B=\{{I_n B}\}_{n \in \mathbb{Z}}$.
 Then
\begin{enumerate}
	\item $\ell(N)=\ell(N_B)$ for any finite length $A$-module $N$.
    \item $\mathcal{F}_B$ is an  $IB$-admissible filtration.
	\item  $\dim M=  \dim M_B$ and  $\depth M =  \depth M_B$. In particular, $M$ is an MCM $A$-module if and only if $M_B$ is MCM $B$-module.
\item Syz$^A_i(M)\otimes_A B\cong \text{Syz}^B_i(M_B)$ for all $i \geq 0$.
	\item $e_i^\mathcal{F}(M)= e_i^{\mathcal{F}_B}(M_B)$ for all $i$.
	\item If $\psi$ is regular and $\mathfrak{a}$ is integrally closed $\mathfrak{m}$-primary ideal in $A$ then $\mathfrak{a}B$ is integrally closed in $B$ (for instance see \cite[2.2(7)]{Heinzer et al.}).
\end{enumerate}
 (II) Assume $A$ is analytically unramified \CM \ local ring and $\mathcal{F}=\{{I_n}\}_{n \in \mathbb{Z}}$ is an $I$-admissible filtration with $I$, $\m$-primary and furthermore $I_1=\overline{I}$ and $I_n=\overline{I^n}$ for $n\gg 0$. We need to base changes as above where $\mathcal{F}_B$ has the property that $I_nB = I^nB = \ov{I^nB}$ whenever $I_n = \ov{I^n}$.
 Note this automatically forces $B$ to be analytically unramified.
 The specific base changes we do are the following:\\
 (i) $B = \widehat{A}$ the completion of $A$. Note that if $J$ is an $\m$-primary integrally closed ideal then $J\widehat{A}$ is also integrally closed, cf., \cite[9.1.1]{HS}.

 (ii) If $A$ has a finite residue field then we can consider the extension $B = A[X]_{\m A[X]}$.  The residue field of $B$ is $k(X)$ which is infinite. Note that if $J$ is an $\m$-primary integrally closed ideal then $J\widehat{A}$ is also integrally closed, cf., \cite[8.4.2]{HS}.

 (iii) Assume $\dim A \geq 2$. Even if $A$ has infinite residue field there might not exist $\mathcal{F}$-superficial element $x$ such that $A/(x)$ is analytically unramified. However a suitable extension $B$ has this property. To see this we first observe two facts. \\
  Let $\mathcal{E}$ be a countable set of MCM  of $A$-modules.  Assume that the residue field $k$ of $A$ is uncountable if $\mathcal{E}$ is an infinite set. Otherwise $k$ is infinite. \\
  (a) There exist $\underline{x}=x_1,\ldots,x_d\in \overline{I}$ such that $\underline{x}$ is $\mathcal{F}$-superficial for each $N \in \mathcal{E}$.  This result is well-known (for instance see \cite[Lemma 2.2]{Pu_GrowthSyz}).\\
   (b)  There exists a generating set $r_1,\ldots,r_t$ of $I$ such that for each $i$, $r_i$ is $I$-superficial and $\mathcal{F}$-superficial element for each $N \in \mathcal{E}$ (see (a) and
     \cite[Lemma 7.3]{Kadu}).

(\cite{Ciup_genr_elm},\cite[Lemma 7.4, Theorem 7.5]{Kadu}) Choose $r_1,\ldots,r_t$ as in (b). Now consider following flat extension of rings $$A\rt \hat{A}\rt B=\hat{A}[X_1,\ldots,X_t]_{\mathfrak{m}\hat{A}[X_1,\ldots,X_t]}.$$
Let  $\zeta= r_1X_1+\ldots+r_tX_t$. Set $C = B/\zeta B$ and $\mathcal{F}_C=\{I_n C\}$.
For $N \in \mathcal{E}$, set $N_B = N\otimes_A B$.
 Then we have

  \begin{enumerate}
	\item $B$ is \anram\ \CM\ local ring of dimension $d$.
	\item $N_B$ is MCM $B$-module for each $N \in \mathcal{E}$.
      \item If $J$ is a integrally closed $\m$-primary ideal in $A$ then $JB$ a integrally closed $\n$-primary ideal in $B$.
	\item $I_1C = \overline{I}C = \overline{IC}$.
	\item $I_nC=\overline{I^nC}$ for all $n\gg 0$.
	\item $C$ is \anram\ \CM\ local ring of dimension $d-1$.
	\item $\zeta$ is $\mathcal{F}_B$-superficial for each  $N_B$ (here $N \in \mathcal{E}$).
\end{enumerate}

(iv) For some of our arguments we need the residue field of $A$ to be uncountable. If  $k$ is finite or countably infinite do the following: First complete $A$. By (i) this is possible. So we may assume $A$ is complete.
	
	Consider extension $\phi : A\longrightarrow A[[X]]_{\mathfrak{m}A[[X]]}=(B,\mathfrak{n})$. Set $B_0=B\otimes_Ak=B/\mathfrak{m}B$. So $B_0=B/\mathfrak{n}=k((X))$ is uncountable. As $k$ is perfect we get  $k((X))$ is $0$-smooth over $k$, see \cite[28.7]{Matsu}.  Using \cite[28.10]{Matsu} we get  $B$ is $\mathfrak{n} (=\mathfrak{m}B)$-smooth. This implies $\phi$ is regular (see \cite[Theorem]{Andre}).
	
	By I(6) if $\mathfrak{a}$ is an integrally closed $\mathfrak{m}$-primary ideal in $A$ then $\mathfrak{a}  B$ is integrally closed in $B$.
Thus   ${I_n}B=\overline{I^nB}$ whenever $I_n = \ov{I^n}$.

\begin{definition}\label{good-def}
We say  a flat extension $\psi \colon (A,\m) \rt (B,\n)$ with $\m B = \n$ \emph{behaves well} with respect to integral closure if for any integrally closed $\mathfrak{m}$-primary ideal $\mathfrak{a}$  in $A$ the ideal $\mathfrak{a}  B$ is integrally closed in $B$.
\end{definition}

We need the following result:
\begin{proposition}\label{good}
Let $(A,\mathfrak{m})$ be \anram\ \CM \ local ring with $\dim A = d\geq 1$, $I$ an $\mathfrak{m}$-primary ideal.
  Let $\mathcal{F}=\{{I_n}\}_{n \in \mathbb{Z}}$  be a $I$-admissible filtration where $I_1=\overline{I}$ and $I_n=\overline{I^n}$ for $n\gg 0$.
  \begin{enumerate}[\rm (1)]
    \item Let $(B,\n)$ be a flat extension of $A$ which behaves well with respect to integral closure. Set $\mathcal{F}_B = \{{I_nB}\}_{n \in \mathbb{Z}}$.
    Then for any MCM $A$-module $M$ we have $e^T_\mathcal{F}(M) = e^T_{\mathcal{F}_B}(M_B)$.
    \item Let $\dim A \geq 2$.  and the residue field of $A$ is infinite. Let $\mathcal{V}$ be any countable set of MCM $A$-modules (containing $A$).
     Assume $k = A/\m$ is uncountable if $\mathcal{V}$ is infinite otherwise $k$ is infinite.
     Then there exists a flat extension $B$ of $A$  which behaves well with respect to integral closure such that there exist $\zeta \in IB$ which is $\mathcal{F}_B$-superficial with respect to  each  $N_B$ (for all $N \in \mathcal{V}$). Furthermore if $C = B/\zeta B$ then $C$ is analytically unramified with  $IC = \ov{IC}$ and $I_nC = \ov{I^nC}$ for all $n \gg 0$. Set $N_C = N\otimes_A C$.
         Furthermore $e^T_{\mathcal{F}}(N) = e^T_{\mathcal{F}_C}(N_C)$ for each $N \in \mathcal{V}$.
  \end{enumerate}
\end{proposition}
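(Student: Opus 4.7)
For (1), the plan is a direct transfer via flat base change. For each $n$, $I_{n+1}$ is $\mathfrak{m}$-primary so $\Tor_1^A(M, A/I_{n+1})$ has finite length, and flatness of $B$ over $A$ together with $A/I_{n+1} \otimes_A B \cong B/I_{n+1}B$ gives
$$\Tor_1^B(M_B, B/I_{n+1}B) \cong \Tor_1^A(M, A/I_{n+1}) \otimes_A B.$$
Lengths are preserved by \ref{BS}(I)(1). Since $B$ behaves well with respect to integral closure, $I_1 B = \ov{IB}$ and $I_n B = \ov{I^n B}$ for $n \gg 0$, so $\mathcal{F}_B$ has the form required in \ref{ankit}. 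Reading off leading coefficients of the common polynomial in $n$ yields $e^T_\mathcal{F}(M) = e^T_{\mathcal{F}_B}(M_B)$.

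For (2), the plan is to follow the construction recorded in \ref{BS}(II)(iii) but applied to a slightly enlarged countable family. Set $\mathcal{V}' = \mathcal{V} \cup \{\Syz_1^A(N) \mid N \in \mathcal{V}\} \cup \{A\}$; this is still countable. Using the cardinality hypothesis on $k$ and \ref{BS}(II)(iii)(b), I would pick a generating set $r_1, \ldots, r_t$ of $I$ with each $r_i$ simultaneously $I$-superficial and $\mathcal{F}$-superficial for every module in $\mathcal{V}'$. Define $B = \hat{A}[X_1, \ldots, X_t]_{\mathfrak{m}\hat{A}[X_1, \ldots, X_t]}$ and $\zeta = \sum_{i=1}^t r_i X_i \in IB$. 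The composite $A \to \hat{A} \to B$ behaves well: $A \to \hat{A}$ preserves integral closure of $\mathfrak{m}$-primary ideals by \cite[9.1.1]{HS}, while $\hat{A} \to B$ is regular so \ref{BS}(I)(6) applies. From \ref{BS}(II)(iii) (relying on Ciuperca--Kadu) one reads off that $\zeta$ is $\mathcal{F}_B$-superficial for every $N_B$ with $N \in \mathcal{V}'$, and that $C = B/\zeta B$ is analytically unramified Cohen-Macaulay of dimension $d-1$ with $IC = \ov{IC}$ and $I_n C = \ov{I^n C}$ for $n \gg 0$.

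It remains to establish $e^T_\mathcal{F}(N) = e^T_{\mathcal{F}_C}(N_C)$. First, (1) gives $e^T_\mathcal{F}(N) = e^T_{\mathcal{F}_B}(N_B)$, reducing the problem to comparing values over $B$ and over $C$. Apply the formula \ref{eT-formula} to $N_B$ over $B$ and to $N_C$ over $C$. Since $\zeta \in \mathfrak{n}$, $\mu(N_B) = \mu(N_C)$. Each of $B$, $N_B$, $\Syz_1^B(N_B) = \Syz_1^A(N) \otimes_A B$ has dimension $d \geq 2$, and $\zeta$ is superficial for each by our choice of $\mathcal{V}'$, so \ref{superficial}(4) matches the three $e_1$-terms on the $B$-side with those on the $C$-side. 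Finally, because $\zeta$ is regular on $N_B$ and on its free cover, $\Syz_1^C(N_C) = \Syz_1^B(N_B)/\zeta \Syz_1^B(N_B)$, which is exactly the quotient to which \ref{superficial}(4) applies.

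The main technical obstacle is not any one calculation but the simultaneous bookkeeping: one must secure (a) that $B$ behaves well through both stages of the extension, and (b) that a \emph{single} $\zeta$ is $\mathcal{F}_B$-superficial for a rich enough collection (namely $B$, every $N_B$, and every $\Syz_1^B(N_B)$) to make the formula \ref{eT-formula} transfer cleanly. This is precisely where the cardinality hypothesis on $k$ is invoked, via \ref{BS}(II)(iii)(b).
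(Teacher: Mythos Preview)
Your proposal is correct and follows essentially the same approach as the paper. The paper's proof is terse: for (1) it simply cites \ref{BS}(I), and for (2) it sets $\mathcal{E} = \{\Syz^A_i(N) : i=0,1,\ N \in \mathcal{V}\}$, invokes the construction of \ref{BS}(II)(iii), and then appeals to \ref{superficial}(4) together with \ref{eT-formula}; your $\mathcal{V}'$ is exactly this $\mathcal{E}$ and you have unpacked the same references in the same order.
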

\begin{proof}
  (1) This follows from \ref{BS}(I).

  (2) Set $\mathcal{E}  = \{ \Syz^A_i(N) \colon i = 0, 1 \  \text{and} \ N \in \mathcal{V}\}$. Then $\mathcal{E}$ is a countable  set and is finite if $\mathcal{V}$ is. Now do the construction in \ref{BS}(I)(iii) and use \ref{superficial}(4) and \ref{eT-formula} to conclude.
\end{proof}
The following lemma follows from \cite[Theorem 2.6]{Pu_stable}, but here we give a short proof (Similar proof also works for $\e()$):
\begin{lemma}\label{sub-add}
	Let $\alpha : 0\rt N\rt E\rt M\rt 0$ be an exact sequence of MCM $A$-modules. Then $\eT(E)\leq \eT(M)+\eT(N)$.
\end{lemma}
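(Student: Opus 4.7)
The plan is to derive the inequality term-by-term, at each $n$, from the long exact sequence of $\Tor$ and then pass to the limit appearing in the definition of $e^T_\mathcal{F}$. Specifically, for each $n \geq 0$, I would tensor the short exact sequence $0 \rt N \rt E \rt M \rt 0$ with $A/I_{n+1}$ and extract the three-term exact piece
\[
\Tor_1^A(N, A/I_{n+1}) \xar \Tor_1^A(E, A/I_{n+1}) \xar \Tor_1^A(M, A/I_{n+1})
\]
from the resulting long exact sequence. Exactness at the middle term immediately yields the length inequality
\[
\ell\bigl(\Tor_1^A(E, A/I_{n+1})\bigr) \ \leq \ \ell\bigl(\Tor_1^A(N, A/I_{n+1})\bigr) + \ell\bigl(\Tor_1^A(M, A/I_{n+1})\bigr).
\]
Note all three lengths are finite because $M$, $N$, $E$ are MCM and $A/I_{n+1}$ has finite length (as $I_{n+1}$ is $\m$-primary).

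Next, I would divide through by $n^{d-1}/(d-1)!$ and take the limit $n \rt \infty$. By the discussion in \ref{eT-formula} (based on \cite[Theorem 7.5]{Kadu}), each of the three numerical functions $n \mapsto \ell(\Tor_1^A(-, A/I_{n+1}))$ is of polynomial type of degree exactly $d-1$ (or is eventually zero, in the free case), so the limits exist and coincide with $e^T_\mathcal{F}(N)$, $e^T_\mathcal{F}(E)$, $e^T_\mathcal{F}(M)$ respectively. Passing to the limit preserves the inequality, giving $\eT(E) \leq \eT(M) + \eT(N)$ as desired.

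There is no genuine obstacle here: the argument is a direct transcription of the standard additivity proof for Hilbert-type multiplicities, and the only thing one must verify is that the limits defining $\eT(-)$ make sense termwise for $I$-admissible filtrations of the form $\mathcal{F}$, which is already recorded above \ref{eT-formula}. The same template applies verbatim to $e_A^T$ in the $\m$-adic case, which is the parenthetical remark in the statement.
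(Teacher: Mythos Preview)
Your proof is correct and follows essentially the same approach as the paper: tensor with $A/I_{n+1}$, extract the length inequality from the three-term piece of the long exact $\Tor$ sequence, and pass to the limit defining $\eT(-)$. The paper's proof is slightly more terse but identical in substance.
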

\begin{proof}
	Consider the long exact sequence of $\alpha\otimes_A A/{{I_{n+1}}}$. We get
	$$\ldots \rt \Tor_1^A(N,A/{{I_{n+1}}})\rt \Tor_1^A(E,A/{{I_{n+1}}})\rt \Tor_1^A(M,A/{{I_{n+1}}})\rt \ldots $$
	So, $\ell(\Tor^A_1(E,A/{I_{n+1}}))\leq \ell(\Tor^A_1(M,A/{I_{n+1}})) + \ell(\Tor^A_1(N,A/{I_{n+1}})) $.
	Now from the definition of $\eT(-)$, required inequality follows.
\end{proof}

\s Let $M,N$ be maximal Cohen-Macaulay $A$-modules and  $\alpha\in \Ext_A^1(M,N)$. Let $\alpha$ be given by an extension $0\rt N\rt E\rt M\rt 0$, here $E$ is an maximal Cohen-Macaulay module. Now set
$$e_{\mathcal{F}}^T(\alpha)=e_{\mathcal{F}}^T(M)+e_{\mathcal{F}}^T(N)-e_{\mathcal{F}}^T(E).$$
\s Let $\alpha_1,\alpha_2\in \Ext^1_A(M,N)$. Suppose $\alpha_i$ can be given by
$0\rt N\rt E_i\rt M\rt 0$ for $i=1,2$. If $\alpha_1$ and $\alpha_2$ are equivalent then $E_1\cong E_2$. So $\eT(\alpha_1)=\eT(\alpha_2)$. This implies $\eT(\alpha)$ is well defined.

Note that $\eT(\alpha)\geq 0$.
\begin{definition}
	An extension $s\in $ Ext$_A^1(M,N)$ is $T_{\mathcal{F}}$-split if $e_{\mathcal{F}}^T(s)=0$.
\end{definition}

\begin{definition}
	Let $M,N$ be maximal Cohen-Macaulay $A$-modules. Set $$T_{{\mathcal{F}},A}(M,N)=\{s| s \ \text{is a } T_{\mathcal{F}}\text{-split extension}\}.$$
	Note that if the choice of the ring $A$ is unambiguous from the context, we denote this set as $\T(M,N)$.
\end{definition}

We will need the following two results:
\begin{lemma}\label{lsplit}
	Let $(A,\mathfrak{m})$ be an \anram\ \CM\ local ring of dimension $d\geq 1$ and let $M, N, N_1, E, E_1$ be  MCM $A$-modules. Suppose we have a commutative diagram
	
	\begin{center}

	\begin{tikzcd}
	\alpha: 0 \arrow[r, "" ]  &N \arrow[r, ""] \arrow[d ] &E \arrow[r, ""]\arrow[d ] &M \arrow[r, ""] \arrow[d, "1_M" ] &0\\
	\beta: 0 \arrow[r, ""] &N_1 \arrow[r ] &E_1 \arrow[r, ""] &M \arrow[r, ""] &0
	\end{tikzcd}
\end{center}
 If $\alpha$ is $\T$ -split, then $\beta$ is also $\T$-split.
\end{lemma}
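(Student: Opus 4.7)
The plan is to reduce the lemma to a single application of subadditivity (Lemma \ref{sub-add}) after producing an auxiliary short exact sequence linking $\alpha$ and $\beta$. Write $\alpha \colon 0 \to N \xrightarrow{i} E \xrightarrow{p} M \to 0$ and $\beta \colon 0 \to N_1 \xrightarrow{j} E_1 \xrightarrow{q} M \to 0$, and let $f \colon N \to N_1$ and $g \colon E \to E_1$ be the vertical maps, so that $gi = jf$ and $qg = p$.

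First I would build the ``mapping cone'' exact sequence
\[
0 \xrightarrow{\phantom{aa}} N \xrightarrow{\binom{i}{-f}} E \oplus N_1 \xrightarrow{(g,\,j)} E_1 \xrightarrow{\phantom{aa}} 0.
\]
Exactness is a routine diagram chase: surjectivity of $(g,j)$ uses that $p$ is surjective and $\ker q = \operatorname{image} j$; the kernel of $(g,j)$ collapses to the image of $\binom{i}{-f}$ because any element of the kernel projects to $0$ in $M$, hence lies in $i(N)$, and then the $N_1$-component is forced by the commutativity $gi = jf$ and the injectivity of $j$. All three terms are MCM (direct sums of MCMs are MCM), so Lemma \ref{sub-add} applies and yields
\[
e^T_{\mathcal{F}}(E \oplus N_1) \leq e^T_{\mathcal{F}}(N) + e^T_{\mathcal{F}}(E_1).
\]

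Next I would use that $e^T_{\mathcal{F}}$ is additive on direct sums (immediate from additivity of $\Tor_1^A(-, A/I_{n+1})$ on direct sums and the limit definition in \ref{eT-formula}), so the left-hand side equals $e^T_{\mathcal{F}}(E) + e^T_{\mathcal{F}}(N_1)$. Now I invoke the $T_{\mathcal{F}}$-split hypothesis on $\alpha$, namely $e^T_{\mathcal{F}}(E) = e^T_{\mathcal{F}}(M) + e^T_{\mathcal{F}}(N)$. Substituting and cancelling $e^T_{\mathcal{F}}(N)$ gives
\[
e^T_{\mathcal{F}}(M) + e^T_{\mathcal{F}}(N_1) \leq e^T_{\mathcal{F}}(E_1).
\]
The reverse inequality $e^T_{\mathcal{F}}(E_1) \leq e^T_{\mathcal{F}}(M) + e^T_{\mathcal{F}}(N_1)$ is just Lemma \ref{sub-add} applied to $\beta$ itself, so both inequalities combine to the equality $e^T_{\mathcal{F}}(\beta) = 0$, showing $\beta$ is $T_{\mathcal{F}}$-split.

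The only step that requires any care is verifying exactness of the auxiliary sequence, but this is the standard construction for a morphism of short exact sequences and is purely diagrammatic; no subtle behavior of the filtration $\mathcal{F}$ enters here. The rest is bookkeeping with the additivity properties of $e^T_{\mathcal{F}}$ and the subadditivity already recorded.
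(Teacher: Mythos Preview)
Your proof is correct and takes a genuinely different route from the paper. The paper argues by induction on $\dim A$: in dimension one it appeals to the argument of \cite[Proposition~3.8]{Pu trn}, and for $d\geq 2$ it performs the flat base change of \ref{BS}.II(iii), finds a superficial element $\zeta$ so that $C=B/\zeta B$ is analytically unramified of dimension $d-1$ with the right filtration, tensors the whole diagram with $C$, and invokes the induction hypothesis together with the invariance $e^T_{\mathcal{F}}(U)=e^T_{\mathcal{F}_C}(U/\zeta U)$ from \ref{good}. By contrast, your mapping-cone sequence $0\to N\to E\oplus N_1\to E_1\to 0$ lets you settle the question in any dimension with two applications of Lemma~\ref{sub-add} and nothing else. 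This is markedly more elementary: it sidesteps the induction, the base-change machinery, and all verification that $e^T_{\mathcal{F}}$ behaves well under those extensions. The paper's approach does fit the broader framework it is building (the same base changes recur in Theorem~\ref{main1}), but for this particular lemma your argument is shorter, self-contained, and conceptually cleaner.
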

\begin{proof}
	If $\dim A = 1$ then we can give an argument similar to \cite[Proposition 3.8]{Pu trn}.  Now assume $d = \dim A \geq 2$ and the result has been proved for all analytically unramified rings of dimension $d -1$.  If the residue field of $A$ is finite then use \ref{BS}II.(ii).  So we may assume $A/\m$ is infinite. Using \ref{good} we may assume that (after going to a flat extension) there exists $\zeta \in I$
such that

(i) $\zeta$ is
$\mathcal{F}$-superficial with respect to $A\oplus U \oplus \Syz^A_1(U)$  for each $U$ in the above diagram.

(ii) $C = A/\zeta A$ is analytically unramified with  $IC = \ov{IC}$ and $I_nC = \ov{I^nC}$ for all $n \gg 0$.

(iii) $e^T_{\mathcal{F}_C}(U/\zeta U) = e^T_{\mathcal{F}}(U)$ for each $U$ in the above diagram.

Notice  $\alpha\otimes C$ and $\beta\otimes C$ are exact. For an $A$-module $V$ set $\ov{V} = V/\zeta V$. So we have a diagram
\begin{center}
\begin{tikzcd}
	\alpha\otimes C: 0 \arrow[r, "" ]  &\ov{N} \arrow[r, ""] \arrow[d ] &\ov{E} \arrow[r, ""]\arrow[d ] &\ov{M} \arrow[r, ""] \arrow[d, "1_{\ov{M}}" ] &0\\
	\beta\otimes C: 0 \arrow[r, ""] &\ov{N_1} \arrow[r ] &\ov{E_1} \arrow[r, ""] &\ov{M} \arrow[r, ""] &0
	\end{tikzcd}
\end{center}
Note $\alpha\otimes C$ is $T_{\mathcal{F}_C}$ -split. By our induction hypotheses $\beta\otimes C$ is $T_{\mathcal{F}_C}$ -split. By our construction it follows that $\beta$ is also $\T$-split.
	\end{proof}
\begin{lemma}\label{usplit}
		Let $(A,\mathfrak{m})$ be an \anram\ \CM\ local ring of dimension $d\geq 1$ and let $M, M_1, N, E, E_1$ be  MCM $A$-modules. Suppose we have a commutative diagram
	
	\begin{center}
	\begin{tikzcd}
	\alpha: 0 \arrow[r, "" ]  &N \arrow[r, ""] \arrow[d, "1_N" ] &E \arrow[r, ""]\arrow[d ] &M \arrow[r, ""] \arrow[d ] &0\\
	\beta: 0 \arrow[r, ""] &N \arrow[r ] &E_1 \arrow[r, ""] &M_1 \arrow[r, ""] &0
	\end{tikzcd}
\end{center}
	 If $\beta$ is $\T$ -split, then $\alpha$ is also $\T$-split.
\end{lemma}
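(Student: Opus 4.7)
The plan is to mirror the proof of Lemma~\ref{lsplit}, proceeding by induction on $d = \dim A$. The dimension-one base case is an explicit calculation with the long Tor sequence exploiting the identity $1_N$ on the left, and the inductive step cuts down by a superficial element furnished by Proposition~\ref{good}.

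\textbf{Base case, $d = 1$.} For $n \gg 0$, one has $\ell(\Tor_1^A(-, A/I_{n+1})) = e^T_{\mathcal{F}}(-)$, so a length count on the five-term exact sequence
\[
\Tor_2^A(M, A/I_{n+1}) \xrightarrow{\delta_\alpha} \Tor_1^A(N, A/I_{n+1}) \rt \Tor_1^A(E, A/I_{n+1}) \rt \Tor_1^A(M, A/I_{n+1}) \xrightarrow{\partial_\alpha} N/I_{n+1}N
\]
gives $e^T_{\mathcal{F}}(\alpha) = \ell(\image \delta_\alpha) + \ell(\image \partial_\alpha)$; in particular $\alpha$ is $\T$-split iff $\delta_\alpha = 0$ and $\partial_\alpha = 0$ for $n \gg 0$, and similarly for $\beta$. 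The map $\alpha \rt \beta$ induces commutative squares whose \emph{right} vertical arrows are the identity on $\Tor_1^A(N, A/I_{n+1})$ and on $N/I_{n+1}N$, so $\delta_\alpha$ factors as $\Tor_2^A(M,A/I_{n+1}) \rt \Tor_2^A(M_1,A/I_{n+1}) \xrightarrow{\delta_\beta} \Tor_1^A(N,A/I_{n+1})$, and similarly $\partial_\alpha$ factors through $\partial_\beta$. As $\beta$ is $\T$-split, both factorizations vanish and $\alpha$ is $\T$-split.

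\textbf{Inductive step, $d \geq 2$.} If $A/\mathfrak{m}$ is finite, first base change to $A[X]_{\mathfrak{m}A[X]}$ via \ref{BS}II.(ii). Apply Proposition~\ref{good}(2) to the finite set $\mathcal{V} = \{A, N, E, M, E_1, M_1\}$ to obtain a flat extension $B$, a common $\mathcal{F}_B$-superficial element $\zeta \in IB$, and $C = B/\zeta B$ analytically unramified with $I_n C = \overline{I^n C}$ for $n \gg 0$ and $e^T_{\mathcal{F}}(U) = e^T_{\mathcal{F}_C}(U/\zeta U)$ for each $U \in \mathcal{V}$. Since each module in the diagram is MCM, $\zeta$ is regular on it, so $\alpha \otimes C$ and $\beta \otimes C$ remain exact with the identity on $\overline{N}$ on the left. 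The base-change identity for $e^T$ translates the $\T$-splitness of $\beta$ into the $T_{\mathcal{F}_C}$-splitness of $\beta \otimes C$; by the induction hypothesis $\alpha \otimes C$ is $T_{\mathcal{F}_C}$-split; applying the identity once more, $\alpha$ is $\T$-split.

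The main obstacle is the $d=1$ characterization: one must verify that $e^T_{\mathcal{F}}(\alpha)$ decomposes \emph{exactly} as $\ell(\image \delta_\alpha) + \ell(\image \partial_\alpha)$, since only then does $T$-splitness translate into the vanishing of both connecting homomorphisms. Once this is in hand, the asymmetry between Lemma~\ref{lsplit} (with $M$ fixed) and Lemma~\ref{usplit} (with $N$ fixed) becomes immaterial: in each case the identity vertical arrow sits on the side of the induced Tor-square that allows the $\alpha$-connecting map to factor through the $\beta$-connecting map.
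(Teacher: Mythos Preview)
Your proof is correct and follows exactly the route the paper intends: the paper's proof of Lemma~\ref{usplit} is the single line ``This is dual to \ref{lsplit},'' and what you have written is precisely the spelled-out dual of the proof of Lemma~\ref{lsplit}, with the $d=1$ Tor computation making explicit why the identity on $N$ forces $\delta_\alpha$ and $\partial_\alpha$ to factor through their $\beta$-counterparts. The decomposition $e^T_{\mathcal{F}}(\alpha)=\ell(\image\delta_\alpha)+\ell(\image\partial_\alpha)$ is exactly the content of \cite[Proposition~3.8]{Pu trn} to which the paper defers, so nothing is missing.
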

\begin{proof}
	This is dual to \ref{lsplit}.
\end{proof}

\section{$\T$-split sequences}
In this section we prove our results regarding $\T$.
\begin{theorem}\label{TsubmoduleExt}
	Let $(A,\mathfrak{m})$ be an analytically unramified Cohen-Macaulay local ring of dimension $d$. Let $I$ be an $\mathfrak{m}$-primary ideal and and $\mathcal{F}=\{{I_n}\}$ where $I_1=\overline{I}$ and $I_n= \overline{I^{n}}$ for $n\gg0$. Let $M, N$ be MCM  $A$-module, then $\T(M,N)$ is a submodule of  $\Ext^1_A(M,N)$.
\end{theorem}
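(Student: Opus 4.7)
The plan is to verify the three submodule axioms for $\T(M,N) \subseteq \Ext^1_A(M,N)$: that it contains the zero class, is closed under the $A$-action, and is closed under addition. Lemmas \ref{lsplit} and \ref{usplit} already guarantee that $\T$-splitness is preserved under pullback with identity on the $N$-side and under pushout with identity on the $M$-side, so the task reduces to recognising scalar multiplication and the Baer sum as compositions of such operations, plus one direct-sum calculation.

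The preliminary observation is that $\eT$ is additive on direct sums: $\eT(X\oplus Y) = \eT(X)+\eT(Y)$. This is immediate from $\Tor_1^A(X\oplus Y, A/I_{n+1}) \cong \Tor_1^A(X, A/I_{n+1}) \oplus \Tor_1^A(Y, A/I_{n+1})$ combined with the definition of $\eT$ as a normalised limit of lengths of such $\Tor_1$'s. In particular the split extension $0 \to N \to N\oplus M \to M \to 0$ has $\eT = 0$, so $0 \in \T(M,N)$.

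For the $A$-action, given $\alpha \in \T(M,N)$ and $r \in A$, I realise $r\alpha$ as the pullback of $\alpha$ along $r\cdot 1_M$; the resulting commutative diagram has $1_N$ on the left and $r\cdot 1_M$ on the right, so Lemma \ref{usplit} yields $r\alpha \in \T(M,N)$. For additivity I use the Baer sum $\alpha+\beta = \nabla_*\,\Delta^*(\alpha\oplus\beta)$, where $\Delta\colon M\to M\oplus M$ is the diagonal and $\nabla\colon N\oplus N\to N$ is the codiagonal. In three steps: (i) the direct sum $\alpha\oplus\beta\colon 0 \to N\oplus N \to E_\alpha\oplus E_\beta \to M\oplus M \to 0$ is $\T$-split by the additivity of $\eT$ above; (ii) the pullback $\Delta^*(\alpha\oplus\beta)$, which fits into a diagram with $1_{N\oplus N}$ on the left and $\Delta$ on the right, is $\T$-split by Lemma \ref{usplit}; (iii) the pushout of that extension along $\nabla$, which fits into a diagram with $\nabla$ on the left and $1_M$ on the right, is $\T$-split by Lemma \ref{lsplit}. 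Composing yields $\alpha+\beta \in \T(M,N)$.

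I do not anticipate a genuine obstacle: the algebraic content is carried entirely by the two preceding lemmas together with additivity of $\Tor_1$ on direct sums. The only point requiring mild vigilance is that Lemmas \ref{lsplit} and \ref{usplit} be applied with $N\oplus N$ and $M\oplus M$ in the roles of $N$ and $M$; this is legitimate because finite direct sums of MCM modules are MCM, and every middle term arising in the Baer construction sits in a short exact sequence of MCM modules and is therefore itself MCM by the depth lemma.
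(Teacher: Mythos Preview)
Your proof is correct and follows essentially the same approach as the paper: both reduce the submodule axioms to Lemmas \ref{lsplit} and \ref{usplit} together with additivity of $\eT$ on direct sums, via the Baer-sum description of addition in $\Ext^1$. The only cosmetic differences are that you realise $r\alpha$ as a pullback along $r\cdot 1_M$ (invoking \ref{usplit}) whereas the paper uses the equivalent pushout along $r\cdot 1_N$ (invoking \ref{lsplit}), and you apply $\Delta^*$ before $\nabla_*$ rather than after; neither affects the argument.
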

\begin{proof}
	Let $\alpha : 0\rt N\rt E\rt M\rt 0$ be a $\T$-split extension and $r\in A$, then we can define $r\alpha$
		\begin{center}
		\begin{tikzcd}
		\alpha: 0 \arrow[r, "" ]  &N \arrow[r, ""] \arrow[d, "r" ] &E \arrow[r, ""]\arrow[d ] &M \arrow[r, ""] \arrow[d, "1_M" ] &0\\
		r\alpha: 0 \arrow[r, ""] &N \arrow[r ] &E_1 \arrow[r, ""] &M \arrow[r, ""] &0
		\end{tikzcd}
	\end{center}
Note that first square is push-out diagram. Since $\alpha $ is $\T$-split, this implies $r\alpha $ is also $\T$-split (see \ref{lsplit}).

Let $\alpha: 0\rt N\rt E\rt M\rt 0$ and $\alpha': : 0\rt N\rt E'\rt M\rt 0$ be two $\T$-split extensions. We want to show $\alpha +\alpha'$ is also $\T$-split.
Note that the addition operation on $\Ext^1_A(M,N)$ is Bear sum, that is $\alpha +\alpha':= (\nabla(\alpha \oplus\alpha'))\Delta$.

	Since $\alpha$ and $\alpha'$ are $\T$-split this implies $\alpha \oplus\alpha': 0\rt N\oplus N\rt E\oplus E'\rt M\oplus M\rt 0$ also $\T$-split. Consider following diagram
	 \begin{center}
	 	\begin{tikzcd}
	 	(\alpha\oplus \alpha'): 0 \arrow[r, "" ]  &N\oplus N \arrow[r, ""] \arrow[d, "\nabla" ] &E\oplus E' \arrow[r, ""]\arrow[d ] &M\oplus M \arrow[r, ""] \arrow[d, "1_M" ] &0\\
	 	\nabla(\alpha\oplus \alpha'): 0 \arrow[r, ""] &N \arrow[r ] &E_1 \arrow[r, ""] &M\oplus M \arrow[r, ""] &0
	 	\end{tikzcd}
	 \end{center}
  Note that first square is is pushout diagram.
 From \ref{lsplit}, $\nabla(\alpha\oplus \alpha')$ is $\T$-split.
 Now consider the diagram
  \begin{center}
 	\begin{tikzcd}
 	\nabla(\alpha\oplus \alpha')\Delta: 0 \arrow[r, "" ]  &N \arrow[r, ""] \arrow[d, "1_N" ] &E_2 \arrow[r, ""]\arrow[d ] &M \arrow[r, ""] \arrow[d, "\Delta" ] &0\\
 	\nabla(\alpha\oplus \alpha'): 0 \arrow[r, ""] &N \arrow[r ] &E_1 \arrow[r, ""] &M\oplus M \arrow[r, ""] &0
 	\end{tikzcd}
 \end{center}
Here second square is pullback diagram. Now from \ref{usplit}, $\alpha + \alpha=(\nabla(\alpha\oplus \alpha'))\Delta$ is $\T$-split.
	\end{proof}

We now show
\begin{theorem}\label{Functor}
(with hypotheses as in \ref{TsubmoduleExt}) $\T \colon \CMa(-) \times \CMa(-) \rt \modA(A)$ is a functor.
\end{theorem}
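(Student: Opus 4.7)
The plan is to leverage Theorem \ref{TsubmoduleExt} together with the two naturality lemmas \ref{lsplit} and \ref{usplit}. By \ref{TsubmoduleExt} we already know that for every pair $(M,N)$ of MCM modules, $\T(M,N)$ is a submodule of $\Ext^1_A(M,N)$. So what remains is to check that, given morphisms $f\colon M'\rt M$ and $g\colon N\rt N'$ of MCM $A$-modules, the induced maps
\[
\Ext^1_A(f,N)\colon \Ext^1_A(M,N)\rt \Ext^1_A(M',N), \qquad \Ext^1_A(M,g)\colon \Ext^1_A(M,N)\rt \Ext^1_A(M,N')
\]
restrict to maps $\T(M,N)\rt \T(M',N)$ and $\T(M,N)\rt \T(M,N')$ respectively. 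Once this is done, functoriality of $\T$ in each slot is inherited from functoriality of $\Ext^1_A(-,-)$ (composition and identity axioms hold automatically because the inclusions into $\Ext^1$ are compatible).

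For the first point, let $\alpha\colon 0\rt N\rt E\rt M\rt 0$ represent an element of $\T(M,N)$, and let $f\colon M'\rt M$ be a morphism of MCM modules. The pullback of $\alpha$ along $f$ yields a commutative diagram
\begin{center}
\begin{tikzcd}
\alpha\cdot f\colon 0 \arrow[r] & N \arrow[r]\arrow[d, "1_N"] & E' \arrow[r]\arrow[d] & M' \arrow[r]\arrow[d, "f"] & 0 \\
\alpha\colon 0 \arrow[r] & N \arrow[r] & E \arrow[r] & M \arrow[r] & 0
\end{tikzcd}
\end{center}
with $E'$ an MCM module (since $N$ and $M'$ are MCM, hence so is any extension between them). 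Since $\alpha$ is $\T$-split, Lemma \ref{usplit} applied to this diagram shows that $\alpha\cdot f$ is $\T$-split; hence $\Ext^1_A(f,N)$ sends $\T(M,N)$ into $\T(M',N)$.

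For the second point, given $g\colon N\rt N'$, take the pushout of $\alpha$ along $g$ to get
\begin{center}
\begin{tikzcd}
\alpha\colon 0 \arrow[r] & N \arrow[r]\arrow[d, "g"] & E \arrow[r]\arrow[d] & M \arrow[r]\arrow[d, "1_M"] & 0 \\
g\cdot\alpha\colon 0 \arrow[r] & N' \arrow[r] & E'' \arrow[r] & M \arrow[r] & 0
\end{tikzcd}
\end{center}
where again $E''$ is MCM. Since $\alpha$ is $\T$-split, Lemma \ref{lsplit} gives that $g\cdot\alpha$ is $\T$-split, so $\Ext^1_A(M,g)$ sends $\T(M,N)$ into $\T(M,N')$. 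Combining the two variances, for a pair of morphisms $(f,g)$ the induced map $\Ext^1_A(f,g)$, which factors as the composition of the two one-sided maps just considered, restricts to $\T(M,N)\rt \T(M',N')$. There is no serious obstacle here: the content of the theorem is already packaged in Lemmas \ref{lsplit} and \ref{usplit}, and the only thing to verify is that the pullback/pushout constructions used to describe the action of $\Ext^1_A(-,-)$ on morphisms give diagrams of precisely the shape required by those two lemmas, which they do.
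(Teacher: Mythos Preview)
Your argument is correct and follows exactly the approach indicated in the paper: invoke Theorem \ref{TsubmoduleExt} for the submodule property, then use Lemmas \ref{lsplit} and \ref{usplit} to check that pushout and pullback preserve $\T$-splitness, which is precisely what is needed for $\T$ to be a sub-functor of $\Ext^1_A(-,-)$.
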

\begin{proof}
  This is similar to \cite[3.13]{Pu trn}. We have to use Theorem \ref{TsubmoduleExt} and Lemmas \ref{lsplit}, \ref{usplit}.
\end{proof}
The following is one of the main result of our paper.
\begin{theorem}\label{main1}
	Let $(A,\mathfrak{m})$ be an analytically unramified Cohen-Macaulay local ring of dimension $d$ with uncountable residue field. Let $I$ be an  $\mathfrak{m}$-primary ideal and $\mathcal{F}=\{{I_n}\}$ where $I_1=\overline{I}$ and $I_n= \overline{I^{n}}$ for $n\gg0$. Let $M, N$ be MCM  $A$-module then $\Ext_A^1(M,N)/T_{\mathcal{F}}(M,N)$ has finite length.	
\end{theorem}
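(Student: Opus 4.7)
The plan is to argue by induction on $d=\dim A$. In the base case $d=1$, analytic unramifiedness forces $A$ reduced, hence for every non-maximal prime $\mathfrak{p}$ the localisation $A_{\mathfrak{p}}$ is a field; so $M_{\mathfrak{p}},N_{\mathfrak{p}}$ are free over $A_{\mathfrak{p}}$, $\Ext^1_{A_{\mathfrak{p}}}(M_{\mathfrak{p}},N_{\mathfrak{p}})=0$, and $\Ext^1_A(M,N)$ itself is of finite length, whence so is the quotient.

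For $d\geq 2$, I would invoke Proposition \ref{good}(2) with the finite set $\mathcal{V}=\{A,M,N\}$ (permitted since $k$ is uncountable), producing a faithfully flat local extension $\psi \colon (A,\mathfrak{m}) \to (B,\mathfrak{n})$ with residue field again uncountable and an element $\zeta \in IB$ which is $\mathcal{F}_B$-superficial for each of $A_B, M_B, N_B, \Syz^B_1(M_B), \Syz^B_1(N_B)$, and such that $C=B/\zeta B$ is analytically unramified \CM\ of dimension $d-1$ with $I_nC=\overline{I^nC}$ for $n\gg0$. Proposition \ref{good}(1) and the characterisation of $T$-split by the vanishing of $e^T$ together imply $T_{\mathcal{F}}(M,N)\otimes_A B = T_{\mathcal{F}_B}(M_B,N_B)$; since finite length descends along faithfully flat local maps, it suffices to establish the theorem over $B$.

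Reduction modulo $\zeta$ then defines a map $\phi \colon \Ext^1_B(M_B,N_B)\to \Ext^1_C(M_C,N_C)$: for an extension $0\to N_B\to E\to M_B\to 0$, the snake lemma applied to multiplication by $\zeta$ (using its regularity on $M_B$ and $N_B$) forces $\zeta$-regularity on $E$ and exactness of $0\to N_C\to E/\zeta E\to M_C\to 0$. The induction hypothesis applied to $C$ yields that $\Ext^1_C(M_C,N_C)/T_{\mathcal{F}_C}(M_C,N_C)$ has finite length over $C$. If the induced map
$$\bar\phi \colon \Ext^1_B(M_B,N_B)/T_{\mathcal{F}_B}(M_B,N_B)\longrightarrow \Ext^1_C(M_C,N_C)/T_{\mathcal{F}_C}(M_C,N_C)$$
is injective, the finitely generated $B$-module on the left embeds into a module annihilated by a power of the maximal ideal of $C$ and hence by a power of $\mathfrak{n}$, so has finite length over $B$. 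The proof therefore reduces to proving $\bar\phi$ injective.

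The injectivity of $\bar\phi$ is the main obstacle. Unpacked, it asks: if $\phi(\alpha)$ is $T_{\mathcal{F}_C}$-split, is $\alpha$ itself $T_{\mathcal{F}_B}$-split? By Proposition \ref{good}(1) and the superficial choice of $\zeta$, one has $e^T_{\mathcal{F}_C}(M_C) = e^T_{\mathcal{F}_B}(M_B)$ and $e^T_{\mathcal{F}_C}(N_C) = e^T_{\mathcal{F}_B}(N_B)$, so the hypothesis reads $e^T_{\mathcal{F}_C}(E/\zeta E) = e^T_{\mathcal{F}_B}(M_B) + e^T_{\mathcal{F}_B}(N_B)$; combined with the subadditivity of Lemma \ref{sub-add}, what remains is the inequality
$$e^T_{\mathcal{F}_C}(E/\zeta E)\;\leq\; e^T_{\mathcal{F}_B}(E)$$
for every MCM $E$ and every $E$-regular (but not necessarily $\mathcal{F}_B$-superficial) element $\zeta$. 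The difficulty is that as $\alpha$ ranges over $\Ext^1_B(M_B,N_B)$, the middle term $E$ visits uncountably many isomorphism types, while Proposition \ref{good}(2) only handles countable collections, preventing a uniform choice of $\zeta$ superficial for every $E$. The anticipated argument leverages the formula \ref{eT-formula} expressing $e^T$ as an alternating combination of first Hilbert coefficients together with Itoh/Narita-type comparison theorems for $e_1$ under the integral closure filtration as developed in \cite{Kadu}. Once this inequality is secured, the sandwich $e^T_{\mathcal{F}_B}(M_B) + e^T_{\mathcal{F}_B}(N_B) = e^T_{\mathcal{F}_C}(E/\zeta E)\leq e^T_{\mathcal{F}_B}(E)\leq e^T_{\mathcal{F}_B}(M_B) + e^T_{\mathcal{F}_B}(N_B)$ collapses to equalities, forcing $\alpha \in T_{\mathcal{F}_B}(M_B,N_B)$ and closing the induction.
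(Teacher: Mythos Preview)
Your induction framework and base case are correct, and you have correctly identified the crux of the matter: controlling $e^T_{\mathcal{F}_C}(E/\zeta E)$ versus $e^T_{\mathcal{F}_B}(E)$ when $\zeta$ is \emph{not} known to be $\mathcal{F}_B$-superficial for $E$ and $\Syz^B_1(E)$. But this is a genuine gap, not a technicality. The formula \ref{eT-formula} gives $e^T$ as an alternating sum of $e_1$'s, and without superficiality for $E$ (and its syzygy) one generally has no inequality in either direction between $e_1^{\mathcal{F}_C}(E_C)$ and $e_1^{\mathcal{F}_B}(E_B)$; the Itoh/Narita-type results you invoke concern positivity or monotonicity of $e_1$ for the ring, not comparison of $e_1$ of an arbitrary module before and after reduction by a non-superficial element. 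I do not see how to repair your route without essentially re-proving superficiality for each $E$, which brings back the uncountability obstruction you flagged.

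The paper sidesteps this entirely, and its mechanism is worth internalising because it is exactly where the uncountable residue field hypothesis earns its keep. Rather than attempting a single $\zeta$ good for \emph{all} middle terms, the paper fixes one extension $\alpha$ and one element $a\in I$, and considers the \emph{countable} family $\{a^n\alpha : n\ge 1\}$ with middle terms $E_n$. Proposition \ref{good}(2), applied to the countable set $\{A, M, N, E_n, \Syz^A_1(M), \Syz^A_1(N), \Syz^A_1(E_n) : n\ge 1\}$, then produces (after a flat extension $B$) an element $\zeta$ superficial for every module in this list. Now $e^T_{\mathcal{F}}(a^n\alpha) = e^T_{\mathcal{F}_C}(\overline{a}^{\,n}(\alpha\otimes C))$ holds on the nose for every $n$, and the induction hypothesis on $C$ forces this to vanish for $n\gg 0$. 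Running this for each generator $a_i$ of $I$ (and each generator of $\Ext^1_A(M,N)$) yields an ideal $(a_1^{n_1},\ldots,a_u^{n_u})$ carrying $\Ext^1_A(M,N)$ into $T_{\mathcal{F}}(M,N)$. The point is that one never needs to compare $e^T$ before and after a non-superficial reduction: the countability of the family $\{a^n\alpha\}$ and the uncountability of $k$ together allow $\zeta$ to be chosen superficial for \emph{all} the relevant modules simultaneously.
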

\begin{proof}
We prove this theorem by induction. If $\dim A=1$, then  $\Ext^1_A(M,N)$ has finite length. In fact, for any prime ideal $\mathfrak{p}\ne \mathfrak{m}$, $(\Ext^1_A(M,N))_{\mathfrak{p}}=0$ because $A$ is reduced. Note that for dimension one case we do not need any assumption on residue field.\\
We now assume $\dim A \geq 2$ and result is true for dimension $d-1$.\\
Let $\alpha : 0\rt N\rt E\rt M\rt 0\in \Ext^1_A(M,N)$ and $a\in I$. Then we have following pushout diagram of $R$-modules for all $n\geq 1$
\begin{center}
	\begin{tikzcd}
	\alpha: 0 \arrow[r, "" ]  &N \arrow[r, ""] \arrow[d, "a^n" ] &E \arrow[r, ""]\arrow[d ] &M \arrow[r, ""] \arrow[d, "1_{M_R}" ] &0\\
	a^n\alpha: 0 \arrow[r, ""] &N \arrow[r ] &E_{n} \arrow[r, ""] &M \arrow[r, ""] &0
	\end{tikzcd}
\end{center}
Set $\mathcal{V} = \{ M, N, E, E_n \colon n \geq 1 \}$ and set  $$\mathcal{E} = \{ A\} \cup \{ \Syz^A_i(U) \colon  i = 0, 1 \ \text{and} \ U \in \mathcal{V} \}.$$
We now do the base change as described in \ref{BS}.II.(iii).
$$A\rt \hat{A}\rt B=\hat{A}[X_1,\ldots,X_t]_{\mathfrak{m}\hat{A}[X_1,\ldots,X_t]}$$
 For any MCM $A$-module $L$, set  $L_B = L\otimes_{A}B$.

Let $\mathcal{F}_B = \{ I_nB \}_{n \in \mathbb{Z}}$.
From  \ref{BS}.II.(iii), for all $n\geq 1$,
 $\zeta$ is $\mathcal{F}_B$-superficial for
 $$B \oplus M_B\oplus N_B \oplus E_{n,B}\oplus \Syz^B_1(M_B)\oplus \Syz^B_1(N_B)\oplus \Syz^B_1(E_{n,B}).$$
 Set $C = B/\zeta B$, $\mathcal{F}_C = \{ I_nC \}_{n \in \mathbb{Z}}$. Then $C$ is analytically unramified with $\dim C = d -1$. Furthermore $I_1C = \ov{I_1C}$ and $I_nC = \ov{I^nC}$ for $n \gg 0$.
 From \ref{good}, we have for all $n \geq 0$,
 $$  e^T_\mathcal{F}(a^n \alpha) = e^T_{\mathcal{F}_C,C}(a^n\alpha\otimes C)=e^T_{\mathcal{F}_C,C}(\overline{a^n}(\alpha\otimes C)).$$
  But from the assumption result is true for $C$. So
  $$e^T_{\mathcal{F}_C, C}(a^n\alpha\otimes C )=e^T_{\mathcal{F}_C, C}(\overline{a^n}(\alpha\otimes C))=0 \quad \text{for $n\gg 0$}. $$
   This implies $e^T_{\mathcal{F},A}(a^n\alpha)=0$ for $n\gg 0$.
    Let $I=(a_1,\ldots,a_u)$.  It follows that $$(a_1^{n_1},\ldots,a_u^{n_u})\Ext^1_A(M,N)\sub \T(M, N).$$
  So $\Ext_A^1(M,N)/T_{\mathcal{F}}(M,N)$ has finite length.
	\end{proof}

\begin{theorem}
	Let $(A,\mathfrak{m})$ be a Cohen-Macaulay analytically unramified  local ring of dimension $d$ with  residue field $k$. Suppose $k$ is  perfect field.  Let $I$ be an  $\mathfrak{m}$-primary ideal and $\mathcal{F}=\{{I_n}\}$ where $I_1=\overline{I}$ and $I_n= \overline{I^{n}}$ for $n\gg0$. Let $M, N$ be MCM  $A$-module then $\Ext_A^1(M,N)/T_{\mathcal{F}}(M,N)$ has finite length.	
\end{theorem}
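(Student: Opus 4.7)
The plan is to reduce to Theorem \ref{main1} via the flat base change $\phi \colon A \to B := A[[X]]_{\mathfrak{m} A[[X]]}$ of \ref{BS}(II)(iv), which replaces the residue field $k$ by the uncountable field $k((X))$. I would first complete $A$ using \ref{BS}(II)(i) (the residue field is unchanged, and all hypotheses on the filtration are preserved), so that $A$ becomes a complete, hence excellent, analytically unramified Cohen-Macaulay local ring. The perfectness of $k$ enters at exactly one step: it makes $k((X))$ a $0$-smooth $k$-algebra by \cite[28.7]{Matsu}, so $\phi$ is regular by \cite[28.10]{Matsu} and \cite[Theorem]{Andre}, and consequently behaves well with respect to integral closure by \ref{BS}(I)(6). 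In particular $\mathcal{F}_B := \{I_n B\}_{n \in \mathbb{Z}}$ is an $IB$-admissible filtration with $I_1 B = \overline{IB}$ and $I_n B = \overline{(IB)^n}$ for $n \gg 0$.

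Next I would verify that $B$ itself meets the hypotheses of Theorem \ref{main1}. Cohen-Macaulayness and $\dim B = d$ follow from \ref{BS}(I)(3), and the residue field $k((X))$ is uncountable. The main technical point is that $B$ must be analytically unramified: $A[[X]]$ is reduced because $A$ is, and excellent because $A$ is complete; localization at $\mathfrak{m} A[[X]]$ preserves both properties, so $B$ is reduced and excellent, which forces $\widehat{B}$ to be reduced. Granting this, Theorem \ref{main1} applies to $(B, M_B, N_B, \mathcal{F}_B)$ and yields an integer $c \geq 1$ with $\mathfrak{m}^c B \cdot \Ext^1_B(M_B, N_B) \subseteq T_{\mathcal{F}_B, B}(M_B, N_B)$.

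Finally I would descend this bound back to $A$ without attempting to relate $T_\mathcal{F}(M, N) \otimes_A B$ and $T_{\mathcal{F}_B, B}(M_B, N_B)$ as submodules. Write $I = (a_1, \dots, a_u)$ and fix $\alpha \in \Ext^1_A(M, N)$. For each $i$, the element $a_i^c(\alpha \otimes_A B)$ lies in $T_{\mathcal{F}_B, B}(M_B, N_B)$, so $e^T_{\mathcal{F}_B}(a_i^c \alpha \otimes_A B) = 0$. Applying Proposition \ref{good}(1) to the three MCM modules appearing in the extension representing $a_i^c \alpha$, one obtains $e^T_\mathcal{F}(a_i^c \alpha) = e^T_{\mathcal{F}_B}(a_i^c \alpha \otimes_A B) = 0$, whence $a_i^c \alpha \in T_\mathcal{F}(M, N)$. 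Hence $(a_1^c, \dots, a_u^c) \Ext^1_A(M, N) \subseteq T_\mathcal{F}(M, N)$; since the ideal $(a_1^c, \dots, a_u^c)$ is $\mathfrak{m}$-primary and $\Ext^1_A(M, N)$ is finitely generated, the quotient $\Ext^1_A(M, N)/T_\mathcal{F}(M, N)$ has finite length. The chief obstacle is the verification that $B$ is analytically unramified; once that is in hand, everything else is formal bookkeeping built on the base change identity for $e^T_\mathcal{F}$.
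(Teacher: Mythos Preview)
Your proposal is correct and follows essentially the same route as the paper: complete $A$, pass to $B = A[[X]]_{\mathfrak{m}A[[X]]}$ via \ref{BS}(II)(iv) to obtain an uncountable residue field, apply Theorem \ref{main1} over $B$, and descend using the base-change identity $e^T_\mathcal{F}(\beta) = e^T_{\mathcal{F}_B}(\beta \otimes_A B)$ for extensions of MCM modules. The only cosmetic differences are that you extract a uniform exponent $c$ from the finite-length conclusion over $B$ (rather than arguing ``for $n \gg 0$'' per element), and you spell out why $B$ is analytically unramified, which the paper treats as part of the setup in \ref{BS}(II).
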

\begin{proof}  By \ref{BS} we may assume $A$ is complete.
If $k$ is uncountable, result follows from Theorem \ref{main1}.
	
	Now we consider the case when  $k$ is finite or countably infinite. Then by \ref{BS}(iv) there exists a flat local extension $(B,\n)$ of $A$ with $\m B = \n$ which behaves well with respect to integral closure such that the residue field of  $B$ is uncountable.
		 Set $\mathcal{F}_B=\{{I_nB}\}_{n \in \mathbb{Z}}$, $M_B=M\otimes B$ and $N_B=N\otimes B$. Also note that ${I_n}B=\overline{I^nB}$ for  $n\gg 0$.
	
	Let $\alpha\in $ Ext$^1_A(M,N)$ and $a\in \mathfrak{m}$. Then for all $n\geq 1$ it is easy to see $$e^T_{\mathcal{F}}(a^n\alpha)=e^T_{\mathcal{F}_B}((a^n\alpha)\otimes B)=e^T_{\mathcal{F}_B}((a^n\otimes 1)(\alpha\otimes B)).$$
	From Theorem \ref{main1}, $e^T_{\mathcal{F}_B}((a^n\otimes 1)(\alpha\otimes B))=0$ for $n\gg 0$. So $e^T_{\mathcal{F}}(a^n\alpha)=0$ for $n\gg 0$. Therefore $a^n\alpha \in T_\mathcal{F}(M,N)$ for $n\gg 0$. Now the result follows from the similar argument as in Theorem \ref{main1}.
	\end{proof}
\section{Weak Brauer-Thrall-II }
We need the following two results.
\begin{lemma}\label{G_Fses}
	Let $(A,\mathfrak{m})$ be an \anram\ \CM\ local ring of dimension $d\geq 1$. Let $I$ be an $\mathfrak{m}$-primary ideal and $\mathcal{F}=\{{I_n}\}$ where $I_1=\overline{I}$ and $I_n= \overline{I^{n}}$ for $n\gg0$. If $M,N$ and $E$ are MCM modules and we have a $T_{\mathcal{F}}$-split sequence $0\rt N\rt E\rt M\rt 0$. Assume $G_{\mathcal{F}}(N)$ is \CM. Then we have short exact sequence $$0\rt G_{\mathcal{F}}(N)\rt G_{\mathcal{F}}(E)\rt G_{\mathcal{F}}(M)\rt 0.$$
	Furthermore, $e_i^{\mathcal{F}}(E)= e_i^{\mathcal{F}}(N)+e_i^{\mathcal{F}}(M)$ for $i=0,\ldots,d$.
\end{lemma}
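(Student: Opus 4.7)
The strategy is to prove that $B_n := (N \cap I_{n+1}E)/I_{n+1}N$ vanishes for every $n \geq 0$: a graded diagram-chase using the modular law and the surjectivity of $E \rt M$ identifies this with exactness of the candidate sequence $0 \rt G_\mathcal{F}(N) \rt G_\mathcal{F}(E) \rt G_\mathcal{F}(M) \rt 0$, and once this short exact sequence is in hand, the additivity $e_i^\mathcal{F}(E) = e_i^\mathcal{F}(N) + e_i^\mathcal{F}(M)$ for $i = 0, \ldots, d$ is automatic by additivity of Hilbert polynomials on short exact sequences of finitely generated graded modules of the same dimension.

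The key length identity comes from the long exact Tor sequence of the given sequence tensored with $A/I_{n+1}$:
\[
\ell(B_n) + \ell(L_1(n)) = \ell(\Tor_1^A(N, A/I_{n+1})) + \ell(\Tor_1^A(M, A/I_{n+1})) - \ell(\Tor_1^A(E, A/I_{n+1})),
\]
where $L_1(n)$ is the image of $\Tor_2^A(M, A/I_{n+1}) \rt \Tor_1^A(N, A/I_{n+1})$. By \ref{eT-formula}, each $\ell(\Tor_1^A(-, A/I_{n+1}))$ is eventually polynomial of degree at most $d - 1$ with normalized leading coefficient $e^T_\mathcal{F}(-)$; the $T_\mathcal{F}$-split hypothesis cancels the leading coefficient of the right-hand side, and nonnegativity of $\ell(B_n)$ and $\ell(L_1(n))$ forces both to be eventually polynomial of degree at most $d - 2$.

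I then induct on $d$. For the base case $d = 1$ this degree bound gives $\ell(B_n) = 0$ for $n \gg 0$; CM-ness of $G_\mathcal{F}(N)$ in dimension one yields $H_\mathcal{F}(N, n) = P_\mathcal{F}(N, n)$ for all $n \geq 0$, and combining this with $e_0$-additivity and the identity $\ell(B_n) = H_\mathcal{F}(N,n) + H_\mathcal{F}(M,n) - H_\mathcal{F}(E,n)$ pins down $\ell(B_n) = 0$ for every $n \geq 0$. For $d \geq 2$ I base-change via Proposition \ref{good} to a flat analytically unramified extension $B$ of $A$ and choose a common $\mathcal{F}_B$-superficial element $\zeta \in IB$ for $M_B$, $N_B$, $E_B$ and their first syzygies, with $C := B/\zeta B$ analytically unramified of dimension $d - 1$ and $\mathcal{F}_C$ of the required form. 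CM-ness of $G_\mathcal{F}(N)$ ensures $\zeta^*$ is a non-zero-divisor on $G_{\mathcal{F}_B}(N_B)$, so $G_{\mathcal{F}_C}(N_B/\zeta N_B)$ is CM of dimension $d - 1$; the quotient sequence is exact (since $\zeta$ is $N_B$-regular) and remains $T_{\mathcal{F}_C}$-split by Proposition \ref{good}, so the inductive hypothesis yields the associated graded short exact sequence over $C$, equivalently $N_B \cap I_{n+1}E_B \subseteq I_{n+1}N_B + \zeta N_B$ for every $n$.

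The main obstacle is the final lifting step: upgrading the $C$-inclusion to $N_B \cap I_{n+1}E_B = I_{n+1}N_B$ over $B$. I handle this by a sub-induction on $n$ (base case $n = 0$ immediate from $\zeta \in I \subseteq I_1$). For $n \geq 1$, writing $z \in N_B \cap I_{n+1}E_B$ as $y + \zeta w$ with $y \in I_{n+1}N_B$ and $w \in N_B$ forces $\zeta w \in N_B \cap I_{n+1}E_B$; iterated use of the colon identity $(I_{k+1}N_B :_{N_B} \zeta) = I_k N_B$, a consequence of $\zeta^*$-regularity on the CM module $G_{\mathcal{F}_B}(N_B)$, together with the sub-inductive vanishing $N_B \cap I_k E_B = I_k N_B$ for $k \leq n$ drives $\zeta w$ into $I_{n+1}N_B$, and faithful flatness of $A \rt B$ returns the conclusion to $A$.
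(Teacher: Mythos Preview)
Your approach matches the paper's, which simply says ``Follows from an argument similar to \cite[Lemma~6.3]{Pu trn}'': the Tor--length identity, induction on $d$, and base change via \ref{good} to pass to a superficial quotient are exactly the ingredients of that argument.

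Two steps, however, do not close as written.

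\textbf{Base case $d=1$.} You correctly obtain $\ell(B_n)=0$ for $n\gg0$, but the passage to all $n\ge0$ is incomplete. Even if one grants $H_\mathcal{F}(N,n)=P_\mathcal{F}(N,n)$ for $n\ge0$ from CM-ness of $G_\mathcal{F}(N)$, the identity
\[
\ell(B_n)=\bigl(H_\mathcal{F}(N,n)-P_\mathcal{F}(N,n)\bigr)+\bigl(H_\mathcal{F}(M,n)-P_\mathcal{F}(M,n)\bigr)-\bigl(H_\mathcal{F}(E,n)-P_\mathcal{F}(E,n)\bigr)
\]
still involves the unknown postulation errors of $M$ and $E$, and nothing you have said controls their signs. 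You need an extra ingredient here (e.g.\ compare the induced filtration $\{N\cap I_{n}E\}$ with $\{I_nN\}$ using that $G_\mathcal{F}(N)$ has positive depth).

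\textbf{Lifting step.} Your sub-induction on $n$ stalls. From $z=y+\zeta w$ you get $\zeta w\in N_B\cap I_{n+1}E_B$; the sub-inductive hypothesis gives only $\zeta w\in N_B\cap I_nE_B=I_nN_B$, hence $w\in(I_nN_B:\zeta)=I_{n-1}N_B$. Iterating produces $w_k\in I_{n-1}N_B$ with $w_k\equiv w\pmod{I_nN_B}$ for all $k$; nothing forces $\zeta w_k$ into $I_{n+1}N_B$. The colon identity for $N_B$ and the sub-induction alone are not enough---you also need asymptotic information on $E_B$ (namely $(I_{n+1}E_B:_{E_B}\zeta)=I_nE_B$ for $n\gg0$, from superficiality). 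With this you can first show $B_n^B=0$ for $n\gg0$ and then run a \emph{descending} induction on $n$, which does terminate.

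Both gaps are repairable and the skeleton is correct, but as stated the argument does not go through.
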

\begin{proof}
	Follows from an argument similar to \cite[Lemma 6.3]{Pu trn}.
\end{proof}
\begin{proposition}\label{T_Ffinitelength}
	Let $(A,\mathfrak{m})$ be an \anram\ \CM\ local ring of dimension $d\geq 1$, $I$ an $\mathfrak{m}$-primary ideal, and $\mathcal{F}=\{\overline{I^n}\}_{n\geq0}$.
Assume the residue field $k = A/\m$  is either uncountable or a perfect field.
Let $M$ and $N$ be MCM $A$-modules with $G_{\mathcal{F}}(M)$ and $G_{\mathcal{F}}(N)$ \CM. If there exists only finitely many non-isomorphic MCM $A$-modules $E$ with $G_{\mathcal{F}}(E)$ \CM\ and $e^{\mathcal{F}}(E)= e^{\mathcal{F}}(N)+e^{\mathcal{F}}(M)$, then $T_{\mathcal{F}}(M,N)$ has finite length (in particular Ext$^1_A(M,N)$ has finite length).
\end{proposition}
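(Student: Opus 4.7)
The plan is to leverage Lemma~\ref{G_Fses} together with the finite-middle-term hypothesis to realize $T_{\mathcal{F}}(M,N)$ as a sum of images of finitely many $A$-linear connecting maps, and then argue finite length by combining this description with Theorem~\ref{main1}. By Lemma~\ref{G_Fses}, every $T_{\mathcal{F}}$-split extension $\alpha\colon 0\to N\to E(\alpha)\to M\to 0$ has $G_{\mathcal{F}}(E(\alpha))$ Cohen--Macaulay and $e^{\mathcal{F}}(E(\alpha))=e^{\mathcal{F}}(M)+e^{\mathcal{F}}(N)$. The hypothesis then supplies a finite list $E_1,\dots,E_r$ of MCM modules so that every $[\alpha]\in T_{\mathcal{F}}(M,N)$ has middle term isomorphic to some $E_j$ (with the split class corresponding to $E=M\oplus N$).

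For each $E_j$ that actually occurs as a middle term, I would fix one $T_{\mathcal{F}}$-split representative $\beta_j\colon 0\to N\to E_j\to M\to 0$ and consider the two $A$-linear connecting maps arising from the long exact sequences of $\Ext$:
\[
\delta_j\colon \Hom_A(N,N)\longrightarrow \Ext^1_A(M,N),\ \phi\mapsto \phi_{*}\beta_j,\quad
\partial_j\colon \Hom_A(M,M)\longrightarrow \Ext^1_A(M,N),\ \psi\mapsto \psi^{*}\beta_j.
\]
Lemmas~\ref{lsplit} and~\ref{usplit} force $\image\delta_j$ and $\image\partial_j$ to lie in $T_{\mathcal{F}}(M,N)$, so $F:=\sum_{j=1}^{r}(\image\delta_j+\image\partial_j)$ is a finitely generated $A$-submodule of $T_{\mathcal{F}}(M,N)$.

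The decisive step is to establish $T_{\mathcal{F}}(M,N)=F$. Given $[\alpha]\in T_{\mathcal{F}}(M,N)$ with $E(\alpha)\cong E_j$, an isomorphism $\sigma\colon E(\alpha)\xrightarrow{\sim}E_j$ induces (via composition with the inclusion $N\hookrightarrow E(\alpha)$ and the projection $E_j\twoheadrightarrow M$) a pair $(\phi,\psi)\in \Hom_A(N,N)\oplus\Hom_A(M,M)$; a push--pull diagram chase analogous to the one in \cite{Pu trn} then expresses $[\alpha]$ as $\phi_{*}\beta_j+\psi^{*}\beta_j$ (up to a split sequence), placing it in $F$. Once $T_{\mathcal{F}}(M,N)=F$, finite length is obtained from Theorem~\ref{main1}: since $\Ext^1_A(M,N)/T_{\mathcal{F}}(M,N)$ has finite length, an $\mathfrak{m}$-primary power annihilates $\Ext^1_A(M,N)/T_{\mathcal{F}}(M,N)$, and together with the boundedness of the Hom-generators of $F$ through the finite list $\beta_1,\dots,\beta_r$ one pins down an $\mathfrak{m}$-primary annihilator for $T_{\mathcal{F}}(M,N)$ itself.

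The main obstacle is the identification $T_{\mathcal{F}}(M,N)=F$: the isomorphism $\sigma$ of middle terms need not respect the distinguished copies of $N$ inside $E(\alpha)$ and $E_j$, nor the distinguished quotients onto $M$. One must therefore extract $(\phi,\psi)$ carefully from $\sigma$ using the structure maps of $\alpha$ and $\beta_j$, and verify that the resulting pushout--pullback expression genuinely recovers $[\alpha]$; this uses the Krull--Remak--Schmidt rigidity available after passing to the completion, which by Proposition~\ref{good} does not alter $T_{\mathcal{F}}(M,N)$.
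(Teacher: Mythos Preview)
Your proposal has a genuine gap in the final step. Even if you grant $T_{\mathcal{F}}(M,N)=F$, the conclusion that $F$ has finite length does not follow. The images $\operatorname{im}\delta_j$ and $\operatorname{im}\partial_j$ are $A$-linear quotients of $\Hom_A(N,N)$ and $\Hom_A(M,M)$, and these are maximal Cohen--Macaulay $A$-modules of Krull dimension $d\ge 1$; nothing prevents their images inside $\Ext^1_A(M,N)$ from having positive Krull dimension. Saying that $F$ is generated by finitely many elements (the $\beta_j$ pushed and pulled by generators of the two Hom modules) does not produce an $\mathfrak{m}$-primary annihilator, and knowing that $\mathfrak{m}^c$ kills $\Ext^1_A(M,N)/T_{\mathcal{F}}(M,N)$ (Theorem~\ref{main-intro}) gives no annihilator for $T_{\mathcal{F}}(M,N)$ itself. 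So the sentence ``together with the boundedness of the Hom-generators \ldots\ one pins down an $\mathfrak{m}$-primary annihilator'' is not an argument, and the entire strategy of exhibiting $T_{\mathcal{F}}$ as a sum of connecting-map images does not, by itself, force finite length.

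There is also a problem one step earlier. The additive identity $[\alpha]=\phi_*\beta_j+\psi^*\beta_j$ is not what an isomorphism $\sigma\colon E(\alpha)\xrightarrow{\sim}E_j$ gives you. After completion (so that KRS is available), two extensions with isomorphic middle terms lie in the same $\operatorname{Aut}(N)\times\operatorname{Aut}(M)$-orbit, i.e.\ $\alpha=\phi_*\psi^*\beta_j$ for \emph{automorphisms} $\phi,\psi$; this is a composed action, not a Baer sum, and an orbit is not a submodule. You correctly flag this as ``the main obstacle'', but the resolution you sketch does not lead to the formula you wrote down, and hence does not establish $T_{\mathcal{F}}(M,N)\subseteq F$. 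The paper itself does not argue via such an $F$; it invokes the argument of \cite[Theorem~7.1]{Pu trn}, which proceeds differently.
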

\begin{proof}
	Follows from  an argument similar to \cite[Theorem 7.1]{Pu trn}.
\end{proof}
The following result is well-known. We indicate a proof for the convenience of the reader.
\begin{lemma}\label{intclosum}
	Let $(A,\mathfrak{m})$ be a Noetherian local ring and $I$ be an ideal of $A$. Set $B=A[X]$ and $J=(I,X)$ then $\overline{J^n}=\sum_{i=0}^{n} \overline{I^{n-i}}X^i$.
\end{lemma}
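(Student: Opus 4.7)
The idea is to work with the standard $\mathbb{N}$-grading on $B=A[X]$ (with $\deg X=1$) under which $J=IB+XB$ is generated by homogeneous elements, so each $J^n$ is a graded ideal. A direct binomial expansion of $(IB+XB)^n$ gives
$$ J^n \;=\; I^nB + I^{n-1}XB + \cdots + IX^{n-1}B + X^nB, $$
and inspecting homogeneous components one checks that $(J^n)_k = I^{\max(0,\,n-k)}X^k$ for every $k\geq 0$. The identity to be proved can then be interpreted (degree by degree, or equivalently as $B$-ideals) as saying that $(\overline{J^n})_k = \overline{I^{\max(0,\,n-k)}}X^k$.

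The inclusion $\supseteq$ is essentially formal. Given $a\in \overline{I^{n-i}}$ with integral equation $a^m+c_1a^{m-1}+\cdots+c_m=0$, $c_j\in I^{j(n-i)}$, multiplying through by $X^{mi}$ yields
$$(aX^i)^m+(c_1X^i)(aX^i)^{m-1}+\cdots+(c_mX^{mi})=0,$$
whose coefficients satisfy $c_jX^{ji}\in I^{j(n-i)}X^{ji}\subseteq J^{jn}$; hence $aX^i\in \overline{J^n}$.

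For the reverse inclusion I would invoke the standard fact that the integral closure of a graded ideal in an $\mathbb{N}$-graded ring is itself graded (e.g.\ \cite{HS}). Then $\overline{J^n}$ is graded and it suffices to analyse a homogeneous element $aX^k\in \overline{J^n}$. Fix an integral equation
$$(aX^k)^m+c_1(aX^k)^{m-1}+\cdots+c_m=0,\qquad c_j\in J^{jn},$$
and extract its degree-$mk$ homogeneous component. Since $(aX^k)^{m-j}$ has degree $k(m-j)$, only the degree-$kj$ piece of $c_j$ contributes; by the formula for $(J^{jn})_{kj}$ this piece has the form $d_jX^{kj}$ with $d_j\in I^{j\max(0,\,n-k)}$. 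The equation collapses to $X^{mk}\bigl(a^m+d_1a^{m-1}+\cdots+d_m\bigr)=0$, and since $X$ is a nonzerodivisor on $B$ we conclude $a\in \overline{I^{\max(0,\,n-k)}}$, which is exactly what is needed.

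The only non-routine ingredient is the graded-ness of the integral closure of a graded ideal; that is the lone potential obstacle, though it is a standard textbook fact. Alternatively, one could avoid the black box by substituting $X\mapsto tX$ for an auxiliary indeterminate $t$ and comparing coefficients of powers of $t$ in an integral equation, which directly yields the same conclusion, but quoting Huneke--Swanson is cleaner.
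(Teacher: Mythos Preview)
Your argument is correct and proceeds along a genuinely different line from the paper's. The paper works with the Rees algebra: it notes that $\overline{\mathscr{R}(I)}=\bigoplus_{n}\overline{I^{n}}t^{n}$ is the integral closure of $\mathscr{R}(I)=A[It]$ in $A[t]$, then invokes the classical fact (Atiyah--Macdonald, Ch.~5, Ex.~9) that integral closure commutes with adjoining a polynomial variable, so that $\overline{\mathscr{R}(I)}[X]$ is the integral closure of $\mathscr{R}(I)[X]$ in $A[t][X]$; the formula is then read off by ``comparing homogeneous components'' (with both $t$ and $X$ given degree~$1$). You instead stay inside $B=A[X]$ with its $X$-grading: after the explicit computation of $(J^{n})_{k}$, you cite the gradedness of $\overline{J^{n}}$ from \cite{HS} and, for each homogeneous $aX^{k}\in\overline{J^{n}}$, extract the degree-$mk$ slice of an integral equation to manufacture an integral equation for $a$ over $I^{n-k}$. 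Both routes rest on the same underlying principle (integral closure respects gradings), but the paper packages it via a Rees-algebra identification while you unwind it by hand; your version is more explicit and self-contained, whereas the paper's is shorter but leaves the final ``comparison of homogeneous components'' to the reader.
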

\begin{proof}
	Consider Rees algebra of $I$, $\mathscr{R}(I)= A[It] =  A\oplus It\oplus I^2t^2\oplus \ldots $.
	Its integral closure   in $A[t]$ is $\overline{\mathscr{R}(I)}= A\oplus \overline{I}t\oplus \overline{I^2}t^2\oplus \ldots $. By \cite[Chapter 5, Exercise 9]{AM} we get that
	$\overline{\mathscr{R}(I)}[X]$ is integral closure of ${\mathscr{R}(I)}[X]$ in $A[t][X]$. Comparing homogeneous components for all $n$ we get $\overline{(I,X)^n}=\sum_{i=0}^{n}\overline{I^{n-i}}X^i$.
\end{proof}
\begin{proposition}\label{d1_dimExt}
	Let $(A,\mathfrak{m})$ be an \anram\ Cohen-Macaulay local ring of dimension one and $I$ an $\mathfrak{m}$-primary ideal. Set $R=A[X]_{(\mathfrak{m},X)}$, $J=(I,X)$, $\mathcal{I}=\{\overline{I^n}\}_{n\geq0}$ and $\mathcal{J}=\{\overline{J^n}\}_{n\geq0}$. Then there exists an MCM $R$-module $E$ with $G_{\mathcal{J}}(E)$ Cohen-Macaulay and dim \normalfont{Ext}$^1_R(E,E)>0$.
\end{proposition}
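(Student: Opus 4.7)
\emph{Strategy.} I would take $E := M \otimes_A R$ for a non-free MCM $A$-module $M$ with $G_\mathcal{I}(M)$ \CM{} --- the existence of such an $M$ being the standing hypothesis of Theorem \ref{BT-intro}, in whose proof this proposition is a building block. With this choice, all three required properties follow from a flat base change analysis combined with the ideal formula of Lemma \ref{intclosum}.

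\emph{MCM-ness and the associated graded.} Since $R$ is flat over $A$ and $X$ is a nonzerodivisor on $E$ with $E/XE \cong M$ (of depth one over $A$), we get $\depth_R E = 1 + \depth_A M = 2 = \dim R$, so $E$ is MCM over $R$. For the associated graded, Lemma \ref{intclosum} gives that the $X^k$-graded component of $\overline{J^n} \cdot M[X]$ equals $\overline{I^{n-k}} M$ for $0 \leq k \leq n$ and equals $M$ for $k \geq n$ (with the convention $\overline{I^0} = A$). Taking successive quotients and summing over $n$ produces an isomorphism
\[ G_\mathcal{J}(E) \;\cong\; G_\mathcal{I}(M)[T], \]
where $T$ is the image of $X$ in $J/\overline{J^2}$, a polynomial variable of degree one. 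Since $T$ is a nonzerodivisor on the right-hand side and $G_\mathcal{I}(M)$ is \CM{} of dimension one, $G_\mathcal{J}(E)$ is \CM{} of dimension two.

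\emph{Dimension of Ext.} Flat base change along $A \to R$ gives $\Ext^1_R(E, E) \cong \Ext^1_A(M, M) \otimes_A R$. Since $A$ is \anram{} of dimension one, it is reduced, so $M$ is free on $\Spec^0(A)$, and hence $\Ext^1_A(M, M)$ has finite length over $A$. Its extension to $R$ is supported on $V(\m R)$, a one-dimensional closed subset of $\Spec(R)$ (noting $R/\m R \cong k[X]_{(X)}$ has dimension one). Faithful flatness of $R_{\m R}$ over $A$ then yields $(\Ext^1_A(M, M) \otimes_A R)_{\m R} \neq 0$ whenever $\Ext^1_A(M, M) \neq 0$, so $\dim_R \Ext^1_R(E, E) = 1 > 0$.

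\emph{Main obstacle.} The delicate point is guaranteeing $\Ext^1_A(M, M) \neq 0$ for our chosen $M$. If the natural $M$ happens to be rigid (so that $\Ext^1_A(M, M) = 0$), I would instead take $E := (M \oplus \Syz_1^A(M)) \otimes_A R$: the non-split fundamental sequence $0 \to \Syz_1^A(M) \to A^{b_0} \to M \to 0$ furnishes a nonzero class in $\Ext^1_A(M, \Syz_1^A(M))$, which is a direct summand of $\Ext^1_A(M \oplus \Syz_1^A(M), M \oplus \Syz_1^A(M))$. The \CM{} property of $G_\mathcal{I}(M \oplus \Syz_1^A(M))$ then reduces to that of $G_\mathcal{I}(\Syz_1^A(M))$, which is compatible with the standing hypothesis on $M$ used in Theorem \ref{BT-intro}.
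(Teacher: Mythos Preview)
Your approach---tensor an MCM $A$-module up to $R$, invoke Lemma \ref{intclosum} for the associated graded, and use flat base change for $\Ext^1$---is essentially the paper's. The chief difference is how the starting module with \CM{} associated graded is obtained. You import it from the hypotheses of Theorem \ref{BT-intro}; the paper instead notes that for \emph{any} MCM $A$-module $M$ and $n_0 \gg 0$, the submodule $N = \overline{I^{n_0}}M$ is MCM with $G_{\mathcal{I}}(N)$ automatically \CM{} (in dimension one the $\mathcal{I}$-filtration on such an $N$ agrees with the $(x)$-adic one for a reduction $x$ of $I$, so $G_{\mathcal{I}}(N) \cong (N/xN)[t]$). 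This keeps the proposition self-contained rather than dependent on an outside hypothesis, and is the one idea you are missing.

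Your ``main obstacle'' is a genuine subtlety---there do exist rigid non-free MCM modules over one-dimensional reduced rings (e.g.\ $A/(x)$ over $k[[x,y]]/(xy)$), so nonvanishing of $\Ext^1_A$ for the chosen module is not automatic; the paper simply cites \cite[Theorem A.11(b)]{Bruns} for the dimension jump without isolating this point. Your fallback $M \oplus \Syz_1^A(M)$ does secure $\Ext^1 \neq 0$, but your assertion that the \CM{}-ness of $G_{\mathcal{I}}(\Syz_1^A(M))$ ``is compatible with the standing hypothesis'' is unjustified: $G_{\mathcal{I}}(M)$ \CM{} does not imply $G_{\mathcal{I}}(\Syz_1^A(M))$ \CM{}. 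The paper's $\overline{I^{n_0}}(-)$ device repairs this cleanly---apply it to $M \oplus \Syz_1^A(M)$ (or indeed to any MCM module) and the associated graded becomes \CM{} regardless of what you started with.
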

\begin{proof}
	Let $M$ be an MCM $A$-module. Fix large enough $n$ (say $n_0$), then it is easy to see $N=\overline{I^{n_0}}M$ is MCM and $G_{\mathcal{I}}(N)$ is Cohen-Macaulay. From lemma \ref{intclosum}, we get $G_{\mathcal{J}}(N\otimes R)= G_{\mathcal{I}}(N)[X]$. So $G_{\mathcal{J}}(N\otimes R)$ is Cohen-Macaulay.
	
	From \cite[Theorem A.11(b)]{Bruns}, dim Ext$^1_R(N\otimes R, N\otimes R) >0$.
\end{proof}

\begin{theorem}\label{dimExt}
	Let $(A,\mathfrak{m})$ be an \anram\ \CM\ local ring of dimension $d\geq1$, and $I$ an $\mathfrak{m}$-primary ideal. Set $R=A[X_1,\ldots,X_m]_{(\mathfrak{m},X_1,\ldots,X_m)}$, $J=(I,X_1,\ldots,X_m)$, $\mathcal{I}=\{\overline{I^n}\}_{n\geq0}$ and $\mathcal{J}=\{\overline{J^n}\}_{n\geq0}$. Also set $S = \widehat{R}$ and $\mathcal{K}=\{\overline{J^n}S\}_{n\geq0}$. If $A$ has an MCM module $M$ with $G_{\mathcal{I}}(M)$ \CM. Then
\begin{enumerate}[\rm (1)]
  \item $M\otimes R$ is an MCM $R$-module with $G_{\mathcal{J}}(M\otimes R)$ \CM\ and \\ $\dim \Ext^1_R(M\otimes R,M\otimes R)>0$.
  \item  We  have $\ov{J^nS} = \ov{J^n}S$ for all $n \geq 1$. Furthermore $M\otimes S$ is an MCM $S$-module with $G_{\mathcal{K}}(M\otimes S)$ \CM\ and $\dim \Ext^1_S(M\otimes S,M\otimes S)>0$.
\end{enumerate}
\end{theorem}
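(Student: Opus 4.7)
My strategy is to reduce both parts to the one-variable case of Proposition \ref{d1_dimExt} by iterating Lemma \ref{intclosum} and using flat base change throughout.

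For part (1), I first note that $R$ is faithfully flat over $A$ with closed fibre $R/\mathfrak{m}R$ regular of dimension $m$, so standard depth-dimension arithmetic makes $M\otimes R$ an MCM $R$-module of dimension $d+m$. Iterating Lemma \ref{intclosum} should give the multivariable formula
\[
\overline{J^n} \;=\; \sum_{|\alpha|\le n}\overline{I^{\,n-|\alpha|}}\,X^\alpha \qquad\text{in } A[X_1,\ldots,X_m],
\]
which remains valid after localising at $(\mathfrak m,X_1,\ldots,X_m)$ by flatness. Tensoring with $M$ (flat again) I expect a graded isomorphism
\[
G_{\mathcal{J}}(M\otimes R) \;\cong\; G_{\mathcal{I}}(M)[X_1,\ldots,X_m]
\]
over $G_{\mathcal{J}}(R)\cong G_{\mathcal{I}}(A)[X_1,\ldots,X_m]$; since polynomial extensions preserve the Cohen-Macaulay property, the hypothesis on $G_{\mathcal{I}}(M)$ transfers. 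For the Ext-support claim, flat base change gives $\operatorname{Ext}^1_R(M\otimes R, M\otimes R)\cong \operatorname{Ext}^1_A(M,M)\otimes_A R$; then I would invoke Bruns' theorem (the ingredient already used in Proposition \ref{d1_dimExt}) applied to the CM local ring $R$ of dimension $d+m\ge 2$ to force positive-dimensional support. In the edge case $d=1$ the $A$-module $\operatorname{Ext}^1_A(M,M)$ has finite length but is nonzero, and tensoring with $R$ adjoins the $m$-dimensional closed fibre, again yielding positive-dimensional support in $\operatorname{Spec} R$.

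For part (2), I first verify $\overline{J^n S}=\overline{J^n}\,S$. Because $A$ is analytically unramified $\widehat A$ is reduced, so $S=\widehat A[[X_1,\ldots,X_m]]$ is reduced and $R$ is analytically unramified. Then \cite[9.1.1]{HS} (as already used in \ref{BS}(II)(i)) implies that completion sends integrally closed $\mathfrak m_R$-primary ideals to integrally closed ones, which gives the equality. Consequently $\mathcal K$ is the termwise completion of $\mathcal J$, and since the graded pieces $\overline{J^n}/\overline{J^{n+1}}$ have finite length they are unchanged under completion, so
\[
G_{\mathcal K}(M\otimes S)\;\cong\;G_{\mathcal J}(M\otimes R)
\]
is Cohen-Macaulay by part (1). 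Finally, flat base change along $R\to S$ yields $\operatorname{Ext}^1_S(M\otimes S,M\otimes S)\cong\operatorname{Ext}^1_R(M\otimes R,M\otimes R)\otimes_R S$, and faithful flatness of completion preserves the dimension of the support.

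The main technical obstacle I anticipate is the multivariable bookkeeping that upgrades Lemma \ref{intclosum} to the full graded identification $G_{\mathcal{J}}(M\otimes R)\cong G_{\mathcal I}(M)[X_1,\ldots,X_m]$; in particular one must track how integral closure in $A[X_1,\ldots,X_m]$ behaves under both the localisation at $(\mathfrak m,X_1,\ldots,X_m)$ and the extension of scalars $A \leadsto M$. Once that identification is in hand, both Cohen-Macaulay assertions are immediate, and the Ext-dimension claims follow formally from flat base change together with the same Bruns input invoked in Proposition \ref{d1_dimExt}.
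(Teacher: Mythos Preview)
Your plan is correct and matches the paper's proof in all essentials: Lemma~\ref{intclosum} for the associated-graded identification, \cite[Theorem~A.11(b)]{Bruns} for the $\Ext$-dimension, \cite[9.1.1]{HS} for integral closure under completion, and flat base change $R\to S$ for part~(2).

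Two small differences are worth noting. First, the paper handles part~(1) by reducing to a single variable (``it is sufficient to prove the result for $m=1$'') and then iterating; this sidesteps exactly the multivariable bookkeeping you flagged as your main obstacle, so you may prefer that route. Second, for the Cohen--Macaulayness of $G_{\mathcal K}(M\otimes S)$ in part~(2), the paper writes $G_{\mathcal K}(M\otimes S)=G_{\mathcal J}(M\otimes R)\otimes_{G_{\mathcal J}(R)}G_{\mathcal K}(S)$ as a flat extension with zero-dimensional fibre and invokes \cite[23.3]{Matsu}, whereas you argue more directly that the finite-length graded pieces are unchanged by completion; both are fine and amount to the same thing here. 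One minor redundancy in your sketch: once you have $\Ext^1_R(M\otimes R,M\otimes R)\cong\Ext^1_A(M,M)\otimes_A R$ by flat base change, the positive-dimensional support already follows from the $m$-dimensional fibre over $\mathfrak m$, so you do not need to invoke Bruns' theorem on top of that (the paper simply cites Bruns directly for $M\otimes R$ over $R$).
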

\begin{proof}
(1)	It is sufficient to prove the result for $n=1$. So we can assume $R=A[X]$. It is easy to see $M\otimes R$ is MCM $R$-module and $G_{\mathcal{J}}(M\otimes R)=G_{\mathcal{I}}(M)[X]$ (follows from Lemma \ref{intclosum}). So $G_{\mathcal{J}}(M\otimes R)$ is \CM.
	
	From \cite[Theorem A.11(b)]{Bruns}, dim Ext$^1_R(M\otimes R, M\otimes R) >0$.

(2) The assertion $\ov{J^nS} = \ov{J^n}S$ for all $n \geq 1$ follows from \cite[9.1.1]{HS}. For the rest observe that
$M\otimes_A S = (M \otimes_A R)\otimes_R S$. This gives $\dim \Ext^1_S(M\otimes S,M\otimes S)>0$. Furthermore $G_\mathcal{K}(S)$ is a flat extension of $G_\mathcal{J}(R)$ with zero-dimensional fiber.  Notice
$$G_{\mathcal{K}}(M\otimes S) = G_{\mathcal{J}}(M\otimes R)\otimes_{G_\mathcal{J}(R)} G_\mathcal{K}(S).$$
By Theorem \cite[23.3]{Matsu} the result follows.
\end{proof}
\begin{theorem}
	Let $(A,\mathfrak{m})$ be a complete reduced \CM\ local ring of dimension $d\geq1$  and $I$ an $\mathfrak{m}$-primary ideal. Assume the residue field $k = A/\m$ is either  uncountable or perfect. Set $R=A[[X_1,\ldots,X_m]]$, $J=(I,X_1,\ldots,X_m)$, $\mathcal{I}=\{\overline{I^n}\}_{n\geq0}$, and $\mathcal{J}=\{\overline{J^n}\}_{n\geq0}$. If $A$ has an MCM module $M$ with $G_{\mathcal{I}}(M)$ \CM \ then  $R$ has infinitely many non-isomorphic MCM modules $D$ with $G_{\mathcal{J}}(D)$ \CM\ and bounded multiplicity.
\end{theorem}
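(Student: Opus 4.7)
The plan is to assemble the machinery developed in the preceding sections. First I would observe that since $A$ is complete, our power series ring $R=A[[X_1,\ldots,X_m]]$ is the $(\m,X_1,\ldots,X_m)$-adic completion of the polynomial localization $A[X_1,\ldots,X_m]_{(\m,X_1,\ldots,X_m)}$, so Theorem \ref{dimExt}(2) applies with our $R$ playing the role of ``$S$'' there. Setting $N = M \otimes_A R$, this produces an MCM $R$-module $N$ with $G_{\mathcal{J}}(N)$ \CM, $\ov{J^n R} = \ov{J^n}$ at the level of $R_0$-extensions so that $\mathcal{J}$ is genuinely the integral closure filtration of $J$ in $R$, and most importantly $\dim \Ext^1_R(N,N) > 0$.

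Next I would check that $R$ itself falls under the hypotheses of Theorem \ref{main1} (and its perfect-residue-field companion) and of Proposition \ref{T_Ffinitelength}: $R$ is a complete \CM\ local ring whose residue field is the residue field of $A$ (hence uncountable or perfect by hypothesis), and it only remains to see that $R$ is analytically unramified, i.e., reduced. Letting $\mathfrak{p}_1,\ldots,\mathfrak{p}_r$ be the minimal primes of $A$, each quotient $A/\mathfrak{p}_i$ is a complete local domain, so each $R/\mathfrak{p}_i R \cong (A/\mathfrak{p}_i)[[X_1,\ldots,X_m]]$ is a domain; and $\bigcap_i \mathfrak{p}_i R = 0$ because any power series all of whose coefficients lie in $\bigcap_i \mathfrak{p}_i = 0$ is zero. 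Hence $R$ is reduced, and therefore analytically unramified.

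Applying the appropriate version of Theorem \ref{main1} to the pair $(N,N)$ over $(R,\mathcal{J})$ yields that $\Ext^1_R(N,N)/T_{\mathcal{J}}(N,N)$ has finite length; combined with $\dim \Ext^1_R(N,N) > 0$ this forces $T_{\mathcal{J}}(N,N)$ itself to have infinite length. The contrapositive of Proposition \ref{T_Ffinitelength} (applied with $M = N$) then provides infinitely many pairwise non-isomorphic MCM $R$-modules $D$ with $G_{\mathcal{J}}(D)$ \CM\ and $e^{\mathcal{J}}(D)=2\,e^{\mathcal{J}}(N)$, which in particular all have bounded (indeed common) multiplicity. Once the pieces are correctly aligned, no step is genuinely hard; the only mildly subtle verifications are that $R=A[[X_1,\ldots,X_m]]$ is reduced and that the filtration $\mathcal{J}$ on $R$ really is the integral closure filtration of $J$ coming from Theorem \ref{dimExt}(2), so that the earlier results can legitimately be invoked over $R$.
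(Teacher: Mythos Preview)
Your proof is correct and follows essentially the same route as the paper, which simply cites Lemma~\ref{G_Fses}, Proposition~\ref{T_Ffinitelength}, and Theorem~\ref{dimExt}; you have just unpacked the argument and carefully verified the standing hypotheses on $R$. One small redundancy: your explicit appeal to Theorem~\ref{main1} to force $T_{\mathcal{J}}(N,N)$ infinite is already absorbed in the parenthetical ``in particular $\Ext^1_A(M,N)$ has finite length'' of Proposition~\ref{T_Ffinitelength}, so you could invoke the contrapositive directly from $\dim \Ext^1_R(N,N)>0$ without the intermediate step.
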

\begin{proof}
	Follows from \ref{G_Fses}, \ref{T_Ffinitelength} and \ref{dimExt}.
\end{proof}
\section{Some results about $e^T_A()$}
In this section we prove Theorem \ref{proj-intro} (see Theorem \ref{proj}).
\begin{lemma}\label{subadd}
	Let $(A,\mathfrak{m}, k)$ be a \CM \ local ring and $M,N$ be MCM $A$-modules. Let $\alpha$ be $T$-split and $\alpha'$ be any extension, then $e^T_A(\alpha+\alpha')\leq e^T_A(\alpha')$. Also, if char$(A)=p^n>0$, then $e^T(\alpha+\alpha')=e^T_A(\alpha')$.
\end{lemma}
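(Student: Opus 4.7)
The strategy is to realize $e^T_A(\alpha)$ intrinsically through the connecting maps of the Tor long exact sequence and to exploit their additivity under the Baer sum. Tensoring $\alpha\colon 0\to N\to E\to M\to 0$ with $A/\mathfrak{m}^{n+1}$ gives
\begin{multline*}
\Tor_2^A(M,A/\mathfrak{m}^{n+1})\xrightarrow{\partial_\alpha}\Tor_1^A(N,A/\mathfrak{m}^{n+1})\to\Tor_1^A(E,A/\mathfrak{m}^{n+1})\\
\to\Tor_1^A(M,A/\mathfrak{m}^{n+1})\xrightarrow{\delta_\alpha}N/\mathfrak{m}^{n+1}N,
\end{multline*}
and breaking this into the short exact sequence with $\Tor_1^A(E,A/\mathfrak{m}^{n+1})$ as middle term yields
\begin{multline*}
\ell\bigl(\Tor_1^A(E,A/\mathfrak{m}^{n+1})\bigr)=\ell\bigl(\Tor_1^A(N,A/\mathfrak{m}^{n+1})\bigr)+\ell\bigl(\Tor_1^A(M,A/\mathfrak{m}^{n+1})\bigr)\\
-\ell(\image(\partial_\alpha))-\ell(\image(\delta_\alpha)).
\end{multline*}
Normalizing by $(d-1)!/n^{d-1}$ and letting $n\to\infty$ shows that $e^T_A(\alpha)$ is exactly the leading-term contribution of $\ell(\image(\partial_\alpha))+\ell(\image(\delta_\alpha))$.

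Next, I would establish that both $\delta_\alpha$ and $\partial_\alpha$ depend additively on $\alpha$, i.e.\ $\delta_{\alpha+\alpha'}=\delta_\alpha+\delta_{\alpha'}$ and $\partial_{\alpha+\alpha'}=\partial_\alpha+\partial_{\alpha'}$. Writing the Baer sum as $\alpha+\alpha'=\nabla(\alpha\oplus\alpha')\Delta$, the long exact sequence for $\alpha\oplus\alpha'$ has connecting maps $\delta_\alpha\oplus\delta_{\alpha'}$ and $\partial_\alpha\oplus\partial_{\alpha'}$ componentwise; naturality of the Tor long exact sequence under pullback along $\Delta\colon M\to M\oplus M$ and pushout along $\nabla\colon N\oplus N\to N$ then gives $\nabla_\ast\circ(\delta_\alpha\oplus\delta_{\alpha'})\circ\Delta_\ast=\delta_\alpha+\delta_{\alpha'}$, and similarly for $\partial$. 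Pointwise in $n$ this yields
\[
\ell(\image(\delta_{\alpha+\alpha'}))\leq\ell(\image(\delta_\alpha))+\ell(\image(\delta_{\alpha'})),
\]
and the analogous bound for $\partial$. Adding these two inequalities, dividing by $n^{d-1}/(d-1)!$, and letting $n\to\infty$ gives the subadditivity $e^T_A(\alpha+\alpha')\leq e^T_A(\alpha)+e^T_A(\alpha')$. Specializing to $\alpha$ being $T$-split (so $e^T_A(\alpha)=0$) proves (1).

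For (2), by \cite[Theorem 1.4]{Pu trn} the $T$-split extensions form an $A$-submodule of $\Ext^1_A(M,N)$, so $-\alpha$ is itself $T$-split. Applying (1) with $-\alpha$ in the role of $\alpha$ and $\alpha+\alpha'$ in the role of $\alpha'$ yields $e^T_A(\alpha')=e^T_A\bigl((\alpha+\alpha')+(-\alpha)\bigr)\leq e^T_A(\alpha+\alpha')$, which together with (1) gives the desired equality; the positive-characteristic hypothesis is presumably a convenience for an alternative route. The main technical point I expect to have to nail down carefully is the additivity of the connecting homomorphisms $\delta$ and $\partial$ under the Baer sum; the rest is length bookkeeping and extracting the leading coefficients of polynomial-type numerical functions.
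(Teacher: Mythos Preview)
Your argument is correct and genuinely different from the paper's. For the inequality, the paper constructs the Baer sum concretely via a pullback of $\alpha$ along $E'\to M$, obtaining an auxiliary sequence $\beta\colon 0\to N\to E''\to E'\to 0$; since $\alpha$ is $T$-split, Lemma~\ref{usplit} forces $\beta$ to be $T$-split, and then a second short exact sequence $\gamma\colon 0\to N\to E''\to Y\to 0$ (with $Y$ the middle term of $\alpha+\alpha'$) yields the identity $e^T_A(\alpha+\alpha')=e^T_A(\alpha')-e^T_A(\gamma)\leq e^T_A(\alpha')$. Your route via additivity of the Tor connecting maps is more conceptual: it proves the stronger general subadditivity $e^T_A(\alpha+\alpha')\leq e^T_A(\alpha)+e^T_A(\alpha')$ for \emph{all} $\alpha,\alpha'$, from which the stated inequality is the special case $e^T_A(\alpha)=0$. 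The additivity of the connecting maps under Baer sum is indeed standard and your naturality argument with $\Delta$ and $\nabla$ is the right way to see it.

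For the equality, your argument is both simpler and strictly stronger than the paper's. The paper uses the chain
\[
e^T_A(\alpha')=e^T_A(p^n\alpha+\alpha')\leq e^T_A((p^n-1)\alpha+\alpha')\leq\cdots\leq e^T_A(\alpha+\alpha')\leq e^T_A(\alpha'),
\]
relying on $p^n\alpha=0$ to close the cycle. Your observation that $-\alpha$ is $T$-split (since $T_A(M,N)$ is a submodule) and that applying part (1) to $-\alpha$ and $\alpha+\alpha'$ gives the reverse inequality works in arbitrary characteristic; so you are right that the positive-characteristic hypothesis is not needed for this lemma.
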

\begin{proof}
	Let $\alpha$ can be represented as $0\rt N\rt E\rt M\rt 0$ and $\alpha'$ as $0\rt N\rt E'\rt M\rt 0$.
	Consider following pullback diagram
		\begin{center}
				\begin{tikzcd}
		\beta: 0 \arrow[r, "" ]  &N \arrow[r, ""] \arrow[d, "1_N" ] &E'' \arrow[r, ""]\arrow[d ] &E' \arrow[r, ""] \arrow[d, "" ] &0\\
		\alpha: 0 \arrow[r, ""] &N \arrow[r ] &E \arrow[r, ""] &M \arrow[r, ""] &0
		\end{tikzcd}
	\end{center}
From \ref{usplit}, $\beta $ is $T$-split. So, 	$\e(E'')=\e(N)+\e(E')$.

Now $\alpha+\alpha'$ can be written as $0\rt N\rt Y\rt M\rt 0$ where $Y=E''/S$ and $S=\{(-n,n)\in E''| n\in N\}$. So we have following commutative diagram
 \begin{center}
 	\begin{tikzcd}
 	\beta: 0 \arrow[r, "" ]  &N \arrow[r, ""] \arrow[d, "1_N" ] &E'' \arrow[r, ""]\arrow[d, "\delta" ] &E' \arrow[r, ""] \arrow[d, "" ] &0\\
 	\alpha+\alpha': 0 \arrow[r, ""] &N \arrow[r ] &Y \arrow[r, ""] &M \arrow[r, ""] &0
 	\end{tikzcd}
  \end{center}
	Here $\delta $ is natural surjection.
	
	Now from the exact sequence $\gamma : 0\rt N\rt E''\xrightarrow{\delta} Y\rt 0$ we get $\e(\gamma)=\e(N)+\e(Y)-\e(E'')$.\\
	Now we get
	\begin{align*}
	e^T_A(\alpha+\alpha')&=e^T_A(N)+\e(M)-\e(Y)\\
	&=\e(\alpha')+\e(E') -\e(Y)\\
	&= \e(\alpha')+\e(E'')-\e(N)-\e(Y)\\
	&= \e(\alpha') -\e(\gamma)
	\end{align*}
	So, $e^T_A(\alpha+\alpha')\leq \e(\alpha').$
	
	If char$(A)=p^n >0$, then we have $$\e(\alpha')\leq \e((p^n-1)\alpha+\alpha')\leq \ldots\leq e^T_A(\alpha+\alpha')\leq \e(\alpha') .$$
	Note that $p^n\alpha=0$ is split exact sequence.\\
	This implies $e^T_A(\alpha+\alpha')= \e(\alpha').$
		\end{proof}

Let $\mathbb{N}$ be the set of non-negative integers.
\begin{remark}\label{factor}
  If $char(A) = p^n > 0$ then we have a well defined function \\ $[e^T_A] \colon \Ext^1_A(M, N)/T_A(M, N) \rt \mathbb{N}$.
\end{remark}
If $V$ is a vector-space over a field $k$ then let $\mathbb{P}(V)$ denote the projective space determined by $V$.
\begin{theorem}\label{proj}
(with hypotheses as in \ref{subadd}) Further assume $A$ is of characteristic $p > 0$ and that $A$ contains a field $k \cong A/\m$. If $\Ext_A^1(M, N) \neq T_A(M,N)$ then the function $[e^T_A]$ defined in \ref{factor} factors as
$$ [\ov{e^T_A}] \colon \mathbb{P}(\Ext^1_A(M, N)/T_A(M, N)) \rt \mathbb{N}\setminus{0}. $$
\end{theorem}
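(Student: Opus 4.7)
The plan is to verify two things: that $[e^T_A]$ is constant on $k^*$-orbits and so descends to $\mathbb{P}(\Ext^1_A(M,N)/T_A(M,N))$, and that the resulting function avoids the value $0$. Since $k \cong A/\m$ embeds in $A$, every $\lambda \in k^*$ is a unit in $A$. Thus the $k^*$-action on $\Ext^1_A(M,N)/T_A(M,N)$ (which is an $A$-module, hence a $k$-vector space by restriction of scalars) is just multiplication by units of $A$. The first point therefore reduces to the identity $e^T_A(\lambda \alpha) = e^T_A(\alpha)$ for every $\alpha \in \Ext^1_A(M,N)$ and every $\lambda \in k^* \subset A$.

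To verify this identity, I would pick a representative $\alpha : 0 \to N \to E \to M \to 0$ and observe that $\lambda\alpha$ is represented by the pushout of $\alpha$ along the endomorphism $\lambda \colon N \to N$. Since $\lambda$ is a unit in $A$, this endomorphism is an automorphism of $N$; the Five Lemma applied to the resulting commutative diagram then forces the induced map $E \to E'$ on middle terms to be an isomorphism, where $E'$ is the middle term of $\lambda\alpha$. Hence $e^T_A(E) = e^T_A(E')$, and the definition $e^T_A(\beta) = e^T_A(M) + e^T_A(N) - e^T_A(\text{middle term of } \beta)$ yields $e^T_A(\lambda\alpha) = e^T_A(\alpha)$. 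Combined with Remark \ref{factor} (which already gives well-definedness on $\Ext^1_A(M,N)/T_A(M,N)$ in characteristic $p$), the induced function $[\ov{e^T_A}]$ on $\mathbb{P}(\Ext^1_A(M,N)/T_A(M,N))$ is then well-defined.

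For the image statement: if $\ov{\alpha} \ne 0$ in $\Ext^1_A(M,N)/T_A(M,N)$, then any representative $\alpha$ fails to lie in $T_A(M,N)$, i.e.\ $\alpha$ is not $T$-split, which by definition means $e^T_A(\alpha) > 0$. Thus $[\ov{e^T_A}](\ov{\alpha}) \geq 1$. The hypothesis $\Ext^1_A(M,N) \neq T_A(M,N)$ guarantees that the projective space is non-empty, so the statement is non-vacuous. The only conceptual step is the pushout-isomorphism for unit scalars, and that is routine via the Five Lemma; the rest of the argument is bookkeeping.
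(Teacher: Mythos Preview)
Your proof is correct and follows essentially the same approach as the paper: both arguments observe that for $\lambda \in k^*$ (a unit in $A$) the pushout diagram defining $\lambda\alpha$ yields an isomorphism $E \cong E'$ of middle terms, whence $e^T_A(\lambda\alpha) = e^T_A(\alpha)$. If anything, your write-up is slightly more thorough than the paper's, since you explicitly justify the image landing in $\mathbb{N}\setminus\{0\}$ whereas the paper simply says ``the result follows.''
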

\begin{proof}
  	Let $\alpha \in \Ext^1_A(M,N)$  be represented as $0\rt N\rt E\rt M\rt 0$. Let $r \in k^*$ and let  $r\alpha$  be represented as $0\rt N\rt E'\rt M\rt 0$.
	Consider the diagram
		\begin{center}
				\begin{tikzcd}
		\alpha: 0 \arrow[r, "" ]  &N \arrow[r, ""] \arrow[d, "r" ] &E \arrow[r, " "]\arrow[d, "\psi" ] &M \arrow[r, ""] \arrow[d, "1_M" ] &0\\
		r\alpha: 0 \arrow[r, ""] &N \arrow[r ] &E' \arrow[r, ""] &M \arrow[r, ""] &0
		\end{tikzcd}
	\end{center}
Note $\psi \colon E \rt E'$ is an isomorphism. It follows that $e^T(\alpha) = e^T(r\alpha)$. The result follows.
\end{proof}
\begin{remark}
All the results in this section is also true for $\eT()$. Same proof works in that case also.
\end{remark}

\s \label{spread} For the rest of this section we consider the following setup:\\
$(A,\m)$ is a complete reduced CM local ring. Also assume $A$ contains a field $k \cong A/\m$. Furthermore $k$ is either uncountable or a perfect field. Let $I$ be an $\m$-primary ideal and let
$\mathcal{F} = \{ I_n \}_{n \in \mathbb{Z}}$ be an $I$-admissible filtration with $I_1 = \ov{I}$ and $I_n = \ov{I^n}$ for $n \gg 0$. Let $M, N$ be MCM $A$-modules and consider the function
\begin{align*}
  e^T_\mathcal{F} &\colon \Ext_A^1(M, N) \rt \mathbb{N} \\
  \alpha &\mapsto e^T(\alpha).
\end{align*}
Notice $e^T_\mathcal{F}(\alpha) \leq e^T_\mathcal{F}(M) + e^T_\mathcal{F}(N)$. So  $e^T_\mathcal{F}(\Ext_A^1(M,N))$ is a bounded set.

If $Z$ is a finite set then set $|Z|$ denote its cardinality. \\ Set $Z_\mathcal{F}(M, N) = | e^T_\mathcal{F}(\Ext_A^1(M,N))|$.

\begin{corollary}(with hypotheses as in \ref{spread}). Further assume $k$ is a finite field and $\Ext^1_A(M, N)$ is non-zero and has finite length as an $A$-module (and so a finite dimensional $k$-vector space).
Set $c(M, N) = | \mathbb{P}(\Ext_A^1(M, N))|$. Let $I$ be any $\m$-primary ideal and let $\mathcal{F} = \{ I_n \}_{n \in \mathbb{Z}}$ be an $I$-admissible filtration with $I_1 = \ov{I}$ and $I_n = \ov{I^n}$ for $n \gg 0$. Then $Z_\mathcal{F}(M, N) \leq c(M, N)$.
\end{corollary}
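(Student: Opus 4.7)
My plan is to combine two invariance properties of $e^T_\mathcal{F}$ that are already available in the preceding sections. First, because $k$ is finite and $k \hookrightarrow A$, the characteristic of $A$ is a positive prime, so Lemma \ref{subadd} --- which by the remark immediately after Theorem \ref{proj} applies verbatim to $e^T_\mathcal{F}$ --- gives $e^T_\mathcal{F}(\beta + \alpha) = e^T_\mathcal{F}(\beta)$ whenever $\alpha \in T_\mathcal{F}(M,N)$. Setting $V = \Ext^1_A(M,N)$ and $T = T_\mathcal{F}(M,N)$, this says that $e^T_\mathcal{F}$ descends to a well-defined function $\widetilde{e^T_\mathcal{F}}\colon V/T \rt \mathbb{N}$ which vanishes exactly on the zero class. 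Second, the pushout isomorphism from the proof of Theorem \ref{proj} (multiplication by $r \in k^* \subset A$ is a unit and therefore induces an isomorphism on the middle term of any extension) yields the $k^*$-equivariance $e^T_\mathcal{F}(r\alpha) = e^T_\mathcal{F}(\alpha)$, which descends to $\widetilde{e^T_\mathcal{F}}$.

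Combining the two, $\widetilde{e^T_\mathcal{F}}$ is constant on the $k^*$-orbits of $V/T$. The nonzero orbits form the projective space $\mathbb{P}(V/T)$ and, exactly as in Theorem \ref{proj}, are sent into $\mathbb{N}\setminus\{0\}$; the only remaining orbit $\{[0]\}$ is mapped to $0$. Therefore
\[
Z_\mathcal{F}(M,N) \;=\; |e^T_\mathcal{F}(V)| \;\leq\; |\mathbb{P}(V/T)| + 1.
\]

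The conclusion $\leq c(M,N) = |\mathbb{P}(V)|$ is now an elementary count over the finite field $k$ of cardinality $q$: writing $n = \dim_k V$ and $r = \dim_k T$, the inequality $(q^{n-r}-1)/(q-1) + 1 \leq (q^n-1)/(q-1)$ is routine as soon as $r \geq 1$; the degenerate case $V/T = 0$ is immediate (the image is just $\{0\}$). The main obstacle is therefore to verify, under the corollary's hypotheses (finite residue field, $V$ nonzero of finite length, $\mathcal{F}$ arbitrary admissible), that one always has $T \neq 0$. Here I would push the argument used to prove Theorem \ref{main1}: for any $\alpha \in V$ and any generator $a$ of $I$, the pushout along multiplication by $a^n$ produces $a^n\alpha$, and the Section~4 base-change argument yields $a^n\alpha \in T$ for $n \gg 0$. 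The finite length of $V$ over $A$ together with $V \neq 0$ should force at least one such $a^n\alpha$ to be a nonzero element of $T$; making this step rigorous in the finite residue field setting, without the residue-field extensions freely used elsewhere in the paper, is the technical heart of the argument.
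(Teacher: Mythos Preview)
Your core strategy --- invoke Theorem~\ref{proj} (extended to $e^T_{\mathcal F}$ via the remark following it) to factor the function through $\mathbb{P}(V/T)$ with $V=\Ext^1_A(M,N)$ and $T=T_{\mathcal F}(M,N)$, then bound by $c(M,N)=|\mathbb{P}(V)|$ --- is exactly the paper's argument. The paper's proof is two lines: from \ref{proj} it records $Z_{\mathcal F}(M,N)\le |\mathbb{P}(V/T)|$ and then notes $|\mathbb{P}(V/T)|\le c(M,N)$; it does not separate off the value $0$ and never discusses whether $T$ is nonzero.

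Your more scrupulous count yields $Z_{\mathcal F}(M,N)\le |\mathbb{P}(V/T)|+1$, and you then attempt to absorb the extra ``$+1$'' by proving $T\neq 0$. That step has a genuine gap. The Section~4 mechanism you want to use shows, for each generator $a$ of $I$ and each $\alpha\in V$, that $a^n\alpha\in T$ for all $n\gg 0$. But by hypothesis $V$ has finite length, so it is annihilated by some power of $\mathfrak m$; hence $a^n\alpha=0$ for all $n\gg 0$ as well, and the inclusion $a^n\alpha\in T$ is vacuous --- it produces no nonzero element of $T$. More broadly, the content of Theorem~\ref{main1} is that $V/T$ has finite length, which is automatic here and says nothing about $\dim_k T$. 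Your argument therefore gives no mechanism to force $\dim_k T\ge 1$, and you have not shown how to close the inequality without it. The paper's proof simply does not engage with this ``$+1$'' issue; you have not introduced a new gap so much as noticed one the paper glosses over, but your proposed repair does not work as written.
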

\begin{proof}
  We may assume $T_\mathcal{F}(M, N) \neq \Ext^1_A(M, N)$. By \ref{proj} we get that
  $$ Z_\mathcal{F}(M, N) \leq  | \mathbb{P}(\Ext^1_A(M, N)/T_\mathcal{F}(M, N)) |. $$
  Note $| \mathbb{P}(\Ext^1_A(M, N)/T_\mathcal{F}(M, N)) |$ is bounded above by $c(M,N)$. The result follows.
\end{proof}
\section{T-split sequences and AR-sequences}
The goal of this section is to prove the following result:
\begin{theorem}
  \label{AR-Tsplit}
  Let $(A,\m)$ be a Henselian \CM \  local ring and let $M$ be an indecomposable MCM $A$-module free on the punctured spectrum of $M$. The following assertions are equivalent:
  \begin{enumerate}[\rm (i)]
    \item  There exists a $T$-split sequence $\alpha \colon 0 \rt K \rt E \rt M \rt 0$ with $\alpha$ non-split.
    \item  There exists a $T$-split sequence $\beta \colon 0 \rt V \rt U \rt M \rt 0$  with $V$ indecomposable and $\beta$ non-split.
    \item The AR-sequence ending at $M$ is $T$-split.
  \end{enumerate}
\end{theorem}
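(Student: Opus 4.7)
The plan is to establish the three equivalences as a cycle
$\mathrm{(iii)} \Rightarrow \mathrm{(ii)} \Rightarrow \mathrm{(i)} \Rightarrow \mathrm{(iii)}$,
with the first two implications essentially formal and the real content lying in $\mathrm{(i)} \Rightarrow \mathrm{(iii)}$.

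For $\mathrm{(iii)} \Rightarrow \mathrm{(ii)}$, I would simply observe that the AR-sequence ending at $M$ exists under our hypotheses ($A$ Henselian, $M$ indecomposable MCM, free on the punctured spectrum), and that by the general theory of almost split sequences (see \cite[Chapter 2]{Yoshino}) it has the form $\gamma\colon 0 \to \tau M \to F \to M \to 0$, where $\gamma$ is non-split and $\tau M$ is indecomposable. Thus, under (iii), $\gamma$ itself serves as the witness $\beta$ required by (ii). The implication $\mathrm{(ii)} \Rightarrow \mathrm{(i)}$ is trivial: simply drop the indecomposability hypothesis on the first term.

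The heart of the proof is $\mathrm{(i)} \Rightarrow \mathrm{(iii)}$. Given a non-split $T$-split sequence $\alpha\colon 0 \to K \to E \to M \to 0$, I would invoke the characterizing \emph{right almost split} property of the AR-sequence $\gamma$: every morphism $X \to M$ which is not a split epimorphism factors through $F \to M$. Since $\alpha$ is non-split, the surjection $E \to M$ is not split, so there exists $\phi \colon E \to F$ with $E \to F \to M$ equal to $E \to M$. Passing to kernels yields $\psi \colon K \to \tau M$ and a morphism of short exact sequences
\begin{center}
\begin{tikzcd}
\alpha\colon 0 \arrow[r] & K \arrow[r] \arrow[d, "\psi"'] & E \arrow[r] \arrow[d, "\phi"'] & M \arrow[r] \arrow[d, "1_M"] & 0 \\
\gamma\colon 0 \arrow[r] & \tau M \arrow[r] & F \arrow[r] & M \arrow[r] & 0
\end{tikzcd}
\end{center}
with the identity on the right end. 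Now the $\mathfrak{m}$-adic analog of Lemma~\ref{lsplit}, proved in part~1 as \cite[Proposition 3.8]{Pu trn}, transfers $T$-splittedness from the top row to the bottom, forcing $\gamma$ to be $T$-split, which is exactly (iii).

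The only non-formal point requiring care is the direction of the AR-factorization: one must feed the map $E \to M$ (not $\tau M \to F$ or any other arrow) into the right almost split property, so that the resulting diagram has $\alpha$ on top, $\gamma$ on the bottom, and $1_M$ on the right column, which is precisely the shape demanded by Lemma~\ref{lsplit}. Once this is verified, both the diagram chase on kernels and the invocation of the part~1 lemma are routine, and the cycle closes.
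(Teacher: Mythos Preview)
Your proof is correct and rests on the same two ingredients as the paper's: the universal property of the AR-sequence to manufacture a morphism of extensions with $1_M$ on the right column, followed by \cite[Proposition 3.8]{Pu trn} to push $T$-splitness from the top row down to the AR-sequence. The paper organizes the cycle slightly differently: rather than jumping straight from (i) to (iii), it first proves $\mathrm{(i)} \Rightarrow \mathrm{(ii)}$ by decomposing $K = K_1 \oplus \cdots \oplus K_r$ (Henselian), forming the pushouts along the projections $p_i$ (each $T$-split by the transfer lemma), and invoking Lemma~\ref{nonsplit} to ensure at least one of them is non-split; only then does it pass to the AR-sequence via \cite[2.8]{Yoshino}, which is phrased for indecomposable left term. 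Your direct route $\mathrm{(i)} \Rightarrow \mathrm{(iii)}$ via the right almost split property --- which needs no hypothesis on the kernel --- is more economical and renders the intermediate reduction to indecomposable $V$ unnecessary.
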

For definition of Auslander-Reiten (AR) sequences, see \cite[Chapter 2]{Yoshino}.
	From \cite[Theorem 3.4]{Yoshino}, we know that for an indecomposable MCM module over $A$, then there is an AR sequence ending in $M$ if and only if $M_{\mathfrak{p}}$ is free for all $\mathfrak{p}\in $ Spec$^0(A)=$ Spec$(A)\setminus \{\mathfrak{m}\}$

Before proving Theorem \ref{AR-Tsplit}   we need the following well-known result.
We give a proof for the convenience of the reader.
\begin{lemma}\label{nonsplit}
	Let $A$ be a Noetherian ring and $N,M$ and $E$ are finite $A$-module. Let $N=N_1\oplus N_2$ and we have following diagram
		\begin{center}
		\begin{tikzcd}
		s: 0 \arrow[r, "" ]  &N \arrow[r, "f"] \arrow[d, "p_i" ] &M \arrow[r, ""]\arrow[d,"\gamma_i" ] &E \arrow[r, ""] \arrow[d, "1_E" ] &0\\
		s_i: 0 \arrow[r, ""] &N_i \arrow[r,"f_i" ] &M_i \arrow[r, ""] &E \arrow[r, ""] &0
		\end{tikzcd}
	\end{center}

	for $i=1,2$. Here $p_i: N\rt N_i$ is projection map for $i=1,2$. If $s$ is non-split then one of the $s_i$ is non-split.
\end{lemma}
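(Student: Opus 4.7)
The plan is to recast the statement in terms of extension classes and use the additivity of $\Ext^1_A(E,-)$ in its second argument. The given commutative diagram, with $p_i\colon N\to N_i$ the projection and $1_E$ on the right, exhibits the sequence $s_i$ as the pushout of $s$ along $p_i$; hence in $\Ext^1_A(E,N_i)$ we have the identity $[s_i]=(p_i)_{*}[s]$.

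Next I would invoke the natural isomorphism
$$ \Phi\colon \Ext^1_A(E,\,N_1\oplus N_2)\ \xrightarrow{\ \sim\ }\ \Ext^1_A(E,N_1)\oplus \Ext^1_A(E,N_2),\qquad [\alpha]\longmapsto \bigl((p_1)_{*}[\alpha],\,(p_2)_{*}[\alpha]\bigr), $$
coming from the fact that $\Ext^1_A(E,-)$ commutes with finite direct sums (with inverse given by pushout along the canonical inclusions $\iota_i\colon N_i\hookrightarrow N$ followed by Baer sum). Under $\Phi$ the class $[s]$ corresponds to the pair $([s_1],[s_2])$.

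I would then conclude by contrapositive: if both $s_1$ and $s_2$ split, then $[s_1]=[s_2]=0$, so $\Phi([s])=0$, and since $\Phi$ is an isomorphism $[s]=0$, i.e.\ $s$ splits. Equivalently, if $s$ is non-split then $\Phi([s])\neq 0$, which forces at least one of $[s_1],[s_2]$ to be nonzero, so at least one of $s_1,s_2$ is non-split.

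There is no substantive obstacle here; the only point that requires a moment's thought is the identification $[s_i]=(p_i)_{*}[s]$, which is immediate from the universal property once one recognizes the left-hand square in the stated diagram as a pushout.
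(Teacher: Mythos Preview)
Your proof is correct. The paper takes a more elementary route: assuming both $s_1$ and $s_2$ split with retractions $g_i\colon M_i\to N_i$ satisfying $g_if_i=1_{N_i}$, it directly constructs the retraction $g=(g_1\gamma_1,\,g_2\gamma_2)\colon M\to N_1\oplus N_2$ and verifies $gf=1_N$ by hand. Your approach packages the same computation into the additivity isomorphism $\Ext^1_A(E,N_1\oplus N_2)\cong\Ext^1_A(E,N_1)\oplus\Ext^1_A(E,N_2)$, under which $[s]\mapsto([s_1],[s_2])$; the explicit splitting the paper writes down is precisely what one obtains by unwinding the inverse of your $\Phi$. Your argument is cleaner if one is comfortable with the Ext formalism and the identification of pushouts with induced maps on $\Ext^1$, while the paper's version is self-contained and avoids invoking any functorial machinery.
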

\begin{proof}
	Let $s_1$ and $s_2$ are split exact sequences. So we have $g_i: M_i\rightarrow N_i$ for $i=1,2$ such that $g_if_i=1_{N_i}$. Consider function $g= (g_1\gamma_1, g_2\gamma_2): M\rightarrow N_1\oplus N_2$. Let $(n_1,n_2)\in N$ then
	\begin{align*}
	gf(n_1,n_2)&=(g_1\gamma_1f(n_1,n_2), g_2\gamma_2f(n_1,n_2))\\
	&=(g_1f_1p_1(n_1,n_2),g_2f_2p_2(n_1,n_2))\\
	&=(n_1,n_2)
	\end{align*}
 This implies $g$ is a  left inverse of $f$, so $s$ is split exact sequence.
 	\end{proof}
 We now give
 \begin{proof}[Proof of Theorem \ref{AR-Tsplit}]
 The assertions (iii) $\implies$ (ii) $\implies$ (i)  are clear.

 (i) $\implies$ (ii). As $A$ is Henselian the module $K$ splits as a sum of indecomposable modules $K = K_1\oplus K_2 \oplus \cdots \oplus K_r$. The result follows from Lemma \ref{nonsplit}.

 (ii) $\implies$ (iii).
 Let $\beta : 0\rt V \rt U\rt M\rt 0$ be  $T$-split and $\beta$ non-split. As $V$ is indecomposable, we have following diagram
		\begin{center}
		\begin{tikzcd}
		 \beta :0 \arrow[r, "" ]  &V\arrow[r, ""] \arrow[d, "" ] & U \arrow[r, ""]\arrow[d ] &M \arrow[r, ""] \arrow[d, "1_M" ] &0\\
		s: 0 \arrow[r, ""] &\tau(M) \arrow[r ] &E \arrow[r, ""] &M \arrow[r, ""] &0
		\end{tikzcd}
	\end{center}
	Here $s$ is AR-sequence ending in $M$, see \cite[2.8]{Yoshino}. This implies $s$ is $T$-split (see \cite[Proposition 3.8]{Pu trn}).
	
 \end{proof}
\section{Some observation about complete intersection}
In this section we prove Theorem \ref{cx2} (see Theorem \ref{cx2-body}).

 \s Let $(A,\mathfrak{m})$ be Noetherian local ring and $M$ a finite $A$-module. We denote the $n$-th Betti number of $A$ module $M$ as $\beta^A_n(M)$. Then complexity of $M$ can be defined as
\[
\text{cx}~ M = \text{inf} \left\{ r\in \mathbb{N}\middle\vert \begin{array}{l}
	\text{there exists polynomial}\ p(t)\ \text{of degree}\ r-1\\
	\text{such that}\ \beta^A_n(M)\leq p(n)\ \text{for}\ n\gg 0
	\end{array}\right\}
\]

\s Let $Q$ be a Noetherian ring  and $f=f_1,\ldots,f_c$ be a $Q$-regular sequence. Set $A=Q/(f_1,\ldots,f_c)$. Let $M$ be a finite $A$-module. Note projdim$_QM<\infty$.

Let $\mathbb{F}$ is a free resolution of $M$ as $A$-module. Let $t_1,\ldots,t_c:\mathbb{F}(+2)\rt \mathbb{F}$ be the Eisenbud operators (see \cite[Section 1]{Eisenbud}). Consider the polynomial ring $B=A[t_1,\ldots,t_c]$ with deg$(t_i)=2$ for $i=1,\ldots,c.$ Let $L$ be an $A$-module, then we can think of  Tor$_*^A(M,L)=\bigoplus_{i\geq 0}\text{Tor}^A_i(M,L)  $ as $B$-module (here we give degree $-i$ for an element of Tor$_i^A(M,L)$).

\begin{theorem}\label{cx2-body}
	Let $(Q,\mathfrak{n},k)$ be a Henselian regular local ring and $\underline{f} = f_1,\ldots,f_c\in \mathfrak{n}^2$ a regular sequence. Assume $k$ is infinite. Set $I=(f_1,\ldots,f_c)$ and $(A,\mathfrak{m})= (Q/I,\mathfrak{n}/I)$.  Assume $\dim A = 1$. Let $M$ be an indecomposable MCM $A$-module with $\cx_AM\ge  2$ and
	$$\mathbb{F}: \ldots \rt F_{n+1}\rt F_n\rt F_{n-1}\rt \ldots$$
	be the minimal free resolution of $M$. Set $M_r= \Syz_r^A(M)$. Then there exists $r_0$ such that for all  $r\geq r_0$, there are exact sequences  $ \alpha_r \colon 0\rt K_r\rt M_{r+2}\rt M_r\rt 0$
such that \begin{enumerate}[\rm (1)]
            \item $\cx K_r \leq \cx M - 1$ for $r \geq r_0$.
            \item $\alpha_r$ is non-split for   $r \geq r_0$.
            \item  $\alpha_r$ is $T$-split for   $r \geq r_0$
          \end{enumerate}
          If furthermore  $M$ is free on the punctured spectrum of $A$ then the AR-sequence ending at $M_r$ is $T$-split for all $r \geq r_0$.
\end{theorem}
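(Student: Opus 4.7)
The plan is to construct $\alpha_r$ using the Eisenbud cohomology operators on the complete intersection $A = Q/I$, verify items (1)--(3) in order, and then deduce the AR-sequence statement from Theorem \ref{AR-Tsplit}. I expect item (3) (the $T$-splitness) to be the main obstacle.

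\emph{Construction and non-splitting.} Since $k$ is infinite and $\cx_A M \geq 2$, I would choose $f \in I \setminus \mathfrak{n}I$ generically so that the associated Eisenbud operator is surjective in high homological degrees with kernel of strictly smaller complexity; the complexity drop by one is the standard Bertini-type output of the Eisenbud formalism (\cite{Eisenbud}) once $\cx_A M \geq 2$. Concretely, $f$ lifts to a chain map $\widetilde{\chi}_f \colon \mathbb{F}(+2) \to \mathbb{F}$ of the minimal $A$-resolution of $M$; restricting to syzygies for $r \gg 0$ yields a surjection $M_{r+2} \twoheadrightarrow M_r$ with MCM kernel $K_r$ satisfying $\cx_A K_r = \cx_A M - 1$, establishing (1). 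For (2), because $A$ is a complete intersection it is Gorenstein; being also Henselian, $\CMS(A)$ is Krull--Remak--Schmidt and the syzygy functor is a stable auto-equivalence. Combined with the fact that minimal syzygies carry no free summand, this forces each $M_r$ to be indecomposable in $\modA(A)$ for $r \geq 1$. If $\alpha_r$ were split then $M_{r+2} \cong K_r \oplus M_r$, and indecomposability of $M_{r+2}$ would force $K_r = 0$; then $\Syz^A_{r+2}(M) \cong \Syz^A_r(M)$, so the minimal resolution of $M$ becomes periodic of period dividing $2$ beyond degree $r$, bounding the Betti numbers of $M$ and contradicting $\cx_A M \geq 2$.

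\emph{$T$-splitness, the main obstacle.} I would establish the stronger structural fact that the horseshoe resolution of $M_{r+2}$ obtained from the minimal resolutions of $K_r$ and $M_r$ is itself minimal, and that the inclusion $K_r \hookrightarrow M_{r+2}$ is $\mathfrak{m}$-pure, i.e.\ $K_r \cap \mathfrak{m}^n M_{r+2} = \mathfrak{m}^n K_r$ for all $n \geq 0$. Minimality of the horseshoe is inherited from the fact that $\widetilde{\chi}_f$ is a chain map of minimal complexes whose kernel subcomplex, after the required truncation and shift, is a minimal $A$-resolution of $K_r$; the $\mathfrak{m}$-purity is the delicate input. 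Granting these, $-\otimes_A A/\mathfrak{m}^{n+1}$ applied to $\alpha_r$ remains short exact, so the long exact $\Tor$ sequence yields
\[
\ell\bigl(\Tor^A_1(M_{r+2},A/\mathfrak{m}^{n+1})\bigr) = \ell\bigl(\Tor^A_1(K_r,A/\mathfrak{m}^{n+1})\bigr) + \ell\bigl(\Tor^A_1(M_r,A/\mathfrak{m}^{n+1})\bigr)
\]
for $n \gg 0$, which is exactly $e^T_A(\alpha_r) = 0$. Equivalently, $\mathfrak{m}$-purity gives exactness of $G_{\mathfrak{m}}(\alpha_r)$, hence additivity of $e_0$ and $e_1$ on both $\alpha_r$ and on its first syzygy $0 \to \Syz_1(K_r) \to M_{r+3} \to M_{r+1} \to 0$, and then the formula $e^T_A(L) = e_1(A)\mu(L) - e_1(L) - e_1(\Syz_1(L))$ from \ref{eT-formula} collapses to $e^T_A(\alpha_r) = 0$.

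\emph{AR-sequence.} Freeness of $M$ on the punctured spectrum is inherited by every syzygy $M_r$, and $M_r$ is indecomposable by the preceding paragraph; hence the AR-sequence ending at $M_r$ exists by \cite[Theorem 3.4]{Yoshino}. Since $\alpha_r$ is a non-split $T$-split sequence ending at $M_r$, Theorem \ref{AR-Tsplit} yields that the AR-sequence ending at $M_r$ is $T$-split.
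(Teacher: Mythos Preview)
Your construction and the argument for (1) and (2) follow the paper's approach closely (Eisenbud operators, complexity drop, indecomposability of syzygies forcing non-splitting). The AR-sequence deduction from Theorem~\ref{AR-Tsplit} is also exactly what the paper does.

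The genuine gap is in (3). You correctly identify $T$-splitness as the main obstacle, you reduce it to the $\mathfrak{m}$-purity of $K_r\subset M_{r+2}$, and then you write ``the $\mathfrak{m}$-purity is the delicate input. Granting these\ldots''. But this \emph{is} the whole content of the theorem; everything else is standard. A generic choice of the operator gives you surjectivity on $\Tor_i^A(k,M)$ for $i\gg 0$ (hence the complexity drop), but it gives no control whatsoever over $\Tor_1^A(A/\mathfrak{m}^j,M)$ for $j>1$, and $\mathfrak{m}$-purity of $K_r\subset M_{r+2}$ is exactly the statement that $\xi\colon \Tor_1^A(A/\mathfrak{m}^j,M_{r+2})\to \Tor_1^A(A/\mathfrak{m}^j,M_r)$ is surjective for \emph{every} $j$. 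You have not supplied any mechanism to pass from $j=1$ to all $j$. (A secondary issue: your first route to the $\Tor_1$-length equality is incomplete even granting purity, since purity only kills the connecting map on the right; you would still need $\Tor_2^A(M_r,A/\mathfrak{m}^{n+1})\to\Tor_1^A(K_r,A/\mathfrak{m}^{n+1})$ to vanish. Your second route via additivity of $e_1$ and the formula in \ref{eT-formula} is fine, provided purity holds for both $\alpha_r$ and $\alpha_{r+1}$.)

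The paper fills precisely this gap. Since $\dim A=1$, one first fixes $n_0\geq \redA(A)$; for each $j\leq n_0$ the total Tor module $\Tor_*^A(A/\mathfrak{m}^j,M)$ is a $*$-Artinian module over the ring of Eisenbud operators, so by \cite[Lemma~3.3]{Eisenbud} a single linear form $\xi$ can be chosen that is eventually surjective on $\Tor_i^A(A/\mathfrak{m}^j,M)$ simultaneously for $j=1,\ldots,n_0$. The passage to all $j$ then uses that multiplication by an $A$-superficial element $x$ induces isomorphisms $\Tor_i^A(A/\mathfrak{m}^n,M)\cong\Tor_i^A(A/\mathfrak{m}^{n+1},M)$ for $n\geq \redA(A)$; naturality of $\xi$ with respect to these isomorphisms propagates the surjectivity from $j=n_0$ to every $j\geq n_0$. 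This two-step argument (finite $*$-Artinian choice plus superficial-element propagation) is the missing idea in your proposal.
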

\begin{proof}
Let $x$ be an $A$-superficial element. The map $\alpha_n \colon A/{\mathfrak{m}^{n}}\rt A/{\mathfrak{m}^{n+1}}$ defined by $\alpha(a+\mathfrak{m}^n) = ax+\mathfrak{m}^{n+1}$ induces an isomorphism of $\Tor^A_i(A/\mathfrak{m}^n,M)$ and $\Tor^A_i(A/\mathfrak{m}^{n+1},M)$  for $n \geq \redA(A)$(see \cite[Lemma 4.1(3)]{Pu_GrowthSyz}).
	
	 Fix $n_0\geq \redA(A)$. For $j = 1,\ldots, n_0$ we have\\  $\Tor_*^A(A/\mathfrak{m}^{j},M)= \bigoplus_{i\geq 0}\Tor_i^A(A/\mathfrak{m}^{j},M)$ is *-Artinian $B=A[t_1,\ldots,t_c]$ module, where $t_1,\ldots,t_c$ are Eisenbud operators. Then for $i\gg 0$ (say $i \geq i_0$) and for $j = 1, \ldots, n_0$ we have following exact sequence $$\Tor_{i+2}^A(A/\mathfrak{m}^{j},M)\xrightarrow{\xi} \Tor_{i}^A(A/\mathfrak{m}^{j},M) \rt 0.$$ Here $\xi$ is a linear combination of $t_1, \ldots, t_c$. (see \cite[Lemma 3.3]{Eisenbud}).

We have following commutative diagram for $i\geq i_0$
	 \begin{center}
	 	\begin{tikzcd}
	 	 \text{Tor}_{i+2}^A(A/\mathfrak{m}^{n_0},M) \arrow[r, "\xi"]\arrow[d,"\theta_{i+2}" ] &\text{Tor}_{i}^A(A/\mathfrak{m}^{n_0},M) \arrow[r, ""] \arrow[d, "\theta_i"] &0\\
	 	 \text{Tor}_{i+2}^A(A/\mathfrak{m}^{n_0+1},M) \arrow[r, "\xi"] &\text{Tor}_{i}^A(A/\mathfrak{m}^{n_0+1},M)  & \
	 	\end{tikzcd}
	 \end{center}
	where $\theta_i = \Tor^A_i(\alpha_{n_0}, M)$. As $\theta_i$ and $\theta_{i+2}$ are isomorphism we get that the bottom row is also surjective. Iterating we get an exact sequence for all $j \geq 1$ and for all $i \geq i_0$,
 \begin{equation*}\label{dagL}
\Tor_{i+2}^A(A/\mathfrak{m}^{j},M)\xrightarrow{\xi} \Tor_{i}^A(A/\mathfrak{m}^{j},M) \rt 0.
\end{equation*}
	
Note $\xi$ induces a chain map $\xi \colon \mathbb{F}[2] \rt \mathbb{F}$. As we have a surjection
$$\Tor_{i+2}^A(A/\mathfrak{m},M)\xrightarrow{\xi} \Tor_{i}^A(A/\mathfrak{m},M) \rt 0, \quad \text{for $i \geq i_0$,} $$
by Nakayama Lemma we have surjections $F_{i+2} \xrightarrow{\xi} F_i$ for all $i \geq i_0$ (say with kernel $G_i$).
Notice we have a short exact sequence of complexes $$0 \rt \mathbb{G}_{\geq i_0} \rt \mathbb{F}[2]_{\geq i_0} \xrightarrow{\xi} \mathbb{F}_{\geq i_0}\ \rt 0.$$
Thus we have surjections $M_{i+2}  \xrightarrow{\xi} M_i$ for all $i \geq i_0$, say with kernel $K_i$.
We note that $	\mathbb{G}_{\geq i_0}$ is a free resolution of $K_{i_0}$ and that $K_i$ is (possibly upto a free summand) the $(i-i_0)^{th}$ syzygy of $K_{i_0}$. It follows that $\cx K_{i} = \cx K_{i_0} \leq \cx M -1$. We have an exact sequence $\alpha_r \colon 0 \rt K_r \rt M_{r+2} \rt M_r \rt 0$ for all $r \geq r_0$.
Since $M$ is indecomposable, $M_r = \Syz_r^A(M)$ is also indecomposable for all $r \geq 1$  (see \cite[Lemma 8.17]{Yoshino}). As $\cx M \geq 2$ it follows that $M_{r+2} \ncong M_r$ for all $r \geq 1$. It follows that $\alpha_r$ is not split for all $r \geq r_0$.

By \ref{dagL} it follows that for $i \geq i_0$ we have an an exact seqquence
$$ 0 \rt \Tor^A_1(A/\m^j, K_i) \rt \Tor^A_1(A/\m^j, M_{i+2}) \rt \Tor^A_1(A/\m^j, M_i) \rt 0,$$
for all $j \geq 1$. Clearly this implies that $\alpha_i$ is $T$-split.

Notice $M_r$ is free on Spec$^0(A)$ for all $r \geq 1$. As $\alpha_r$ is $T$-split, it follows from \ref{AR-Tsplit} that the AR-sequence ending at $M_r$ is $T$-split for all $r \geq r_0$.
	\end{proof}
	
\section{$T$-split sequences on hypersurfaces defined by quadrics}
In this section we prove Theorem \ref{quadric} (see Theorem \ref{quadric-body}). We also construct example \ref{counter} (see \ref{ex-non-split}).
\s \label{setup-q} In this section  $(Q,\mathfrak{n})$ is a Henselian regular local ring with algebraically closed residue field $k = Q/\mathfrak{n}$  and let $f \in \mathfrak{n}^2 \setminus \mathfrak{n}^3$. Assume the hypersurface $A  = Q/(f)$ is an isolated singularity of dimension $d \geq 1$.

\begin{remark}\label{rem-q}
  It is well-known that as $f$ is a quadric, the ring $A$ has minimal multiplicity.  It follows that $e_0(A) = 2$ and $e_1(A) = 1$. We also have  that if $M$ is MCM then $N = \Syz^A_1(M)$  is Ulrich, i.e., $\mu(N) = e_0(N)$ (furthermore $e_1(N) =0)$. As $A$ is also Gorenstein we get that any MCM $A$-module $M \cong F \oplus E$ where $F$ is free and $E$ has no-free summands and is a syzygy of an MCM $A$-module; in particular $E$ is Ulrich.
\end{remark}
The following results computes $e^T(-)$ for MCM $A$-modules. We also give a sufficient condition for a short exact sequence to be $T$-split.
\begin{proposition}\label{compute-et}
(with hypotheses as in \ref{setup-q}) Let $M, N, U, V$ be MCM $A$-modules with $M, N$ having no free-summands. Then
\begin{enumerate}[\rm (1)]
  \item $e^T(M) = \mu(M)$.
  \item Let $U = L \oplus F$ where $F$ is free and $L$ has no free summands. Then $e^T(U) = \mu(L)$.
  \item Let $\alpha \colon 0 \rt N \rt V \rt M \rt 0$. If $\mu(V) = \mu(N) + \mu(M)$ then
  \begin{enumerate}[\rm (a)]
    \item $V$ is Ulrich
    \item  $\alpha$ is $T$-split.
  \end{enumerate}
\end{enumerate}
\end{proposition}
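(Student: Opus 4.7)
The plan is to reduce everything to the formula in \ref{eT-formula} for the $\mathfrak{m}$-adic filtration and exploit the Ulrich structure recorded in Remark \ref{rem-q}. Recall that for the $\mathfrak{m}$-adic case the formula reads
$$e^T(X) = e_1(A)\mu(X) - e_1(X) - e_1(\Syz_1^A(X)) = \mu(X) - e_1(X) - e_1(\Syz_1^A(X)),$$
since $e_1(A)=1$.

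For (1), the observation is that once $M$ has no free summand, Remark \ref{rem-q} says $M$ is (up to iso) a syzygy of an MCM module and hence Ulrich, so $e_1(M)=0$; similarly $\Syz_1^A(M)$ is Ulrich and $e_1(\Syz_1^A(M)) = 0$. Plugging into the formula gives $e^T(M)=\mu(M)$. For (2), I would combine additivity of Tor (hence of $e^T$) on direct sums with the easy verification $e^T(F)=0$ for a free $F$: if $\operatorname{rank}F=r$ then $\mu(F)=r$, $e_1(F)=r\,e_1(A)=r$, and $\Syz_1^A(F)=0$, so the formula yields $e^T(F)=0$. Applying (1) to $L$ then gives $e^T(U)=\mu(L)$.

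For (3)(a), I would first note that $M$ and $N$, having no free summands, are Ulrich by Remark \ref{rem-q}, so $e_0(M)=\mu(M)$ and $e_0(N)=\mu(N)$. Since $\alpha$ is an exact sequence of MCM modules, $e_0$ is additive, giving $e_0(V)=e_0(M)+e_0(N)=\mu(M)+\mu(N)=\mu(V)$ by hypothesis; this is the definition of Ulrich. For (3)(b), I would first check that any Ulrich module has no free summand: if $V = W\oplus A^r$ were Ulrich, then from $\mu(V)=e_0(V)$ and $e_0(A)=2$ one gets $\mu(W)=e_0(W)+r$, contradicting the general inequality $\mu(W)\leq e_0(W)$ unless $r=0$. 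Hence by (1), $e^T(V)=\mu(V)$, and
$$e^T(\alpha) = e^T(M)+e^T(N)-e^T(V) = \mu(M)+\mu(N)-\mu(V) = 0,$$
so $\alpha$ is $T$-split.

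There is no real obstacle; the only place requiring a moment's care is the passage in (3)(b) from ``$V$ is Ulrich'' to ``$V$ has no free summand,'' which is the step that allows me to invoke (1) for $V$ and thereby convert the length-free identity $\mu(V)=\mu(M)+\mu(N)$ into the $T$-split conclusion.
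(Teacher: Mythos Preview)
Your proof is correct and follows essentially the same route as the paper: you use the formula from \ref{eT-formula} together with the Ulrich facts in Remark \ref{rem-q} for (1), additivity over direct sums plus $e^T(\text{free})=0$ for (2), and additivity of $e_0$ on the short exact sequence for (3). The only difference is that you spell out explicitly why an Ulrich module over $A$ can have no free summand (using $e_0(A)=2$), whereas the paper simply asserts this; your added detail is welcome but not a different argument.
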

\begin{proof}
(1) Note $\Syz^A_1(M)$ is also Ulrich.  Using \ref{rem-q}  we have
$$ e^T(M) = e_1(A)\mu(M) - e_1(M) - e_1(\Syz^A_1(M)) = \mu(M). $$

(2) Note $e^T(U) = e^T(L) + e^T(F) = \mu(L) + 0 = \mu(L)$.

(3) We have
\begin{align*}
  e_0(V) &= e_0(M) + e_0(N), \\
   &=  \mu(M) + \mu(N) \quad \text{as $M, N$ are Ulrich,} \\
   &= \mu(V).
\end{align*}
In particular $V$ is Ulrich. So $V$ has no free summands. We have
$$ e^T(\alpha) = e^T(M) + e^T(N) - e^T(V) = \mu(M) + \mu(N) - \mu(V) = 0. $$
So $\alpha$ is $T$-split.
\end{proof}
We now state and prove the main result of this section.
\begin{theorem}
\label{quadric-body}(with hypotheses as in \ref{setup-q}) All but a finitely many AR-sequences of $A$ are $T$-split.
\end{theorem}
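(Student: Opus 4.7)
The plan is to reduce the theorem to a finiteness statement about irreducible morphisms emanating from the free module $A$ in the Auslander--Reiten quiver, and then to invoke local finiteness of that quiver. The reduction uses Proposition~\ref{compute-et} and Remark~\ref{rem-q} in a clean way.

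First, for an indecomposable non-free MCM module $M$, let $\alpha_M \colon 0 \to \tau M \to E_M \to M \to 0$ be the AR-sequence ending at $M$. Both $M$ and $\tau M$ are indecomposable without free summand, so by Remark~\ref{rem-q} they are Ulrich. I would decompose $E_M = E_M^0 \oplus A^s$ with $E_M^0$ having no free summand; by Remark~\ref{rem-q}, $E_M^0$ is then Ulrich as well. Proposition~\ref{compute-et}(1),(2) give $e^T_A(E_M) = \mu(E_M^0)$, and the additivity of multiplicity in the short exact sequence together with $e_0(E_M) = \mu(E_M^0) + 2s$ yields
\[
e^T_A(\alpha_M) \;=\; \mu(M) + \mu(\tau M) - \mu(E_M^0) \;=\; 2s.
\]
Hence $\alpha_M$ is $T$-split if and only if $s=0$, i.e., if and only if $A$ is not a direct summand of $E_M$.

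Next I would appeal to the standard Krull--Schmidt decomposition of the middle term of an AR-sequence: an indecomposable MCM module $N$ appears as a summand of $E_M$ (with multiplicity $\dim_k \mathrm{Irr}(N, M)$) if and only if there is an irreducible morphism $N \to M$. Specialising to $N = A$, the AR-sequence $\alpha_M$ fails to be $T$-split precisely when there is an irreducible morphism $A \to M$. It therefore suffices to show that there are only finitely many isoclasses of indecomposable non-free MCM modules $M$ with $\mathrm{Irr}(A, M) \ne 0$.

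Finally, this finiteness follows from the local finiteness of the AR-quiver of MCM modules over a Henselian local ring with isolated singularity (see \cite[Chapter 5]{Yoshino}): each vertex has only finitely many neighbours. The main obstacle will be justifying local finiteness at the vertex $A$, which is projective in $\CMa(A)$ and admits no AR-sequence ending at it, so one cannot directly bound outgoing irreducible morphisms from $A$ by counting summands of a single AR-middle term. The cleanest way around this is to use the self-duality $M \mapsto \Hom_A(M, A)$ of $\CMa(A)$, which identifies irreducible morphisms $A \to M$ bijectively with irreducible morphisms $M^{\ast} \to A$, and then to appeal to Auslander's existence theorem for almost split sequences at $A$ under the isolated-singularity hypothesis to control the latter.
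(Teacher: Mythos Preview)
Your proof is correct and follows the same overall strategy as the paper: an AR-sequence is $T$-split precisely when its middle term has no free summand, and local finiteness of the AR-quiver at the vertex $A$ then leaves only finitely many exceptions. Your computation $e^T_A(\alpha_M)=2s$ is in fact tighter than the paper's route. The paper does not compute $e^T_A(\alpha_M)$ directly; instead it throws away the finitely many $M$ for which the AR-middle term at $M$ \emph{or at} $\Syz^A_1(M)$ contains a free summand, cites an external result \cite[7.11]{Pu-Ar} to obtain $\mu(V)=\mu(M)+\mu(\tau M)$, and then applies Proposition~\ref{compute-et}(3). Your multiplicity count via $e_0(E_M)=\mu(E_M^0)+2s$ bypasses that citation and the auxiliary condition on $\Syz^A_1(M)$ entirely.

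One caveat on your last paragraph. Duality sends irreducible morphisms $A\to M$ to irreducible morphisms $M^\ast\to A$, but since $A$ is Gorenstein it is also injective in $\CMa(A)$, so there is no AR-\emph{sequence} starting at $A$ either; duality alone does not sidestep the obstacle you flagged. What one actually needs is the existence of a minimal right almost split morphism (sink map) into $A$, which does exist over an isolated singularity and whose source is a finitely generated MCM module --- this is exactly the content behind \cite[5.9]{Yoshino} at the vertex $A$. The paper simply cites that result without comment; you may do the same, or spell out the sink-map argument.
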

\begin{proof}
We may assume that $A$ is of infinite CM \ representation type (i.e., there exists infinitely many mutually non-isomorphic   indecomposable MCM $A$-modules) otherwise there is nothing to prove.
The AR-quiver of $A$ is locally finite graph, \cite[5.9]{Yoshino}. It follows that for all but finitely many MCM indecomposable $A$-modules the middle term of the AR-sequence ending at $M$ and $\Syz^A_1(M)$ will not contain a free summand. Let $M$ be such a  indecomposable MCM $A$-module and let $s \colon  0 \rt \tau(M) \rt V \rt M \rt 0$ be the AR-sequence ending at $M$. Then by
\cite[7.11]{Pu-Ar} we have $\mu(V) = \mu(M) + \mu(\tau(M))$. By \ref{compute-et}(3) it follows that $s$ is $T$-split.
\end{proof}
We now give example of an AR-sequence which is not split.
\begin{example}
\label{ex-non-split}(with hypotheses as in \ref{setup-q}) Let $s \colon 0 \rt N \rt E \rt M \rt 0$ be an AR-sequence such that $E$ has a free summand. Then
\begin{enumerate}[\rm (1)]
  \item $s$ is NOT $T$-split.
  \item If $t \colon 0 \rt V \rt U \rt M \rt 0$ is any non-split exact sequence of MCM $A$-modules then $t$ is NOT $T$-split.
\end{enumerate}
\end{example}
\begin{proof}
(1) Note $\mu(N) \geq \mu(E) - \mu(M)$. Furthermore equality cannot hold for otherwise by Proposition \ref{compute-et} we will get $E$ is Ulrich, a contradiction.
Let $E = L \oplus F$ with $F \neq 0$ free and $L$ has no free summands.
We note that
\begin{align*}
  e^T(s) &= e^T(N) + e^T(M) - e^T(E), \\
   &= \mu(N) + \mu(M) - \mu(L) \\
   &> \mu(N) + \mu(M) - \mu(E)  > 0.\\
\end{align*}
Thus $s$ is NOT $T$-split.

(2) This follows from Theorem \ref{AR-Tsplit}.
\end{proof}
	\section{An Application of $T$-split sequences in Gorenstein case}	
In this section we prove Proposition \ref{relation} (see \ref{rel-body}). We also prove Theorem \ref{krull-schmidt} (see \ref{isom_th} and \ref{ks-body}).
	\s Let $(A,\mathfrak{m})$ be a Gorenstein local ring. Let CM$(A)$ denotes the category of MCM $A$-modules and \underline{CM}$(A)$ the stable category of CM$(A)$. Note that objects of \underline{CM}$(A)$ are same as the objects of CM$(A)$ and if $M$ and $N$ are MCM $A$-modules then $$\underline{\text{Hom}}_{A}(M,N)=\frac{\text{{Hom}}_A(M,N)}{\{f:M\rt N |~ f~~ \text{factors through a projective module}\}}.$$
	
\s\label{cosyz} (Co-syzygy) Let $(A,\mathfrak{m})$ be a Gorenstein local ring and $M$ be an MCM $A$-module. Let $M^*=$ Hom$(M,A)$, then $M^{**}\cong M$. Suppose $G\xrightarrow{\epsilon} F\rt M^*\rt 0$ is a minimal presentation of $M^*$. Dualizing  this, we get $0\rt M\rt F^*\xrightarrow{\epsilon^*} G^*$. Co-syzygy of $M$ can be defined as  coker$(\epsilon^*)$ and denoted as $\Omega^{-1}(M)$. So we have exact sequence $0\rt M\rt F\rt \Omega^{-1}(M)\rt 0 $.\\
 Note that co-syzygy does not depend on the minimal presentation, that is if we take another minimal presentation $G'\xrightarrow{\epsilon'} F'\rt M^*\rt 0$ then coker$(\epsilon^*)\cong$ coker$((\epsilon')^*)$.

\s	Let $\Omega^{-1}(M)$ be the co-syzygy of $M$, then we have following  exact sequence $$0\rt M\rt F\rt \Omega^{-1}(M)\rt 0 $$
	here $F$ is a free $A$-module (see \ref{cosyz}).\\
	For any $f\in \text{{Hom}}_A(M,N)$, we have following diagram
		\begin{center}
		\begin{tikzcd}
		 0 \arrow[r, "" ]  &M \arrow[r, ""] \arrow[d, "f" ] &F \arrow[r, ""]\arrow[d,"" ] &\Omega^{-1}(M) \arrow[r, ""] \arrow[d, "1" ] &0\\
		\alpha_f: 0 \arrow[r, ""] &N \arrow[r,"i" ] &C(f) \arrow[r, "p"] &\Omega^{-1}(M) \arrow[r, ""] &0
		\end{tikzcd}
	\end{center}
Here the first sequare is a pushout diagram.
\begin{remark}
	Note that \underline{CM}$(A)$ is an triangulated category with the projection of the sequence  $M\xrightarrow{f}N\xrightarrow{i} C(f)\xrightarrow{-p} \Omega^{-1}(M)$ in \underline{CM}$(A)$ as a basic triangles for any morphism $f$. Exact triangles are triangles isomorphic to a basic triangle (see \cite[4.7]{Buchw}). Also note that for any short exact sequence $0\rt U\rt V\rt W\rt 0$ in {CM}$(A)$, we have exact triangle $U\rt V\rt W\rt \Omega^{-1}(U)$ (see \cite[Remark 3.3]{Pu_stable}).
\end{remark}
\s Let $M$ and $N$ be  MCM $A$-modules, the
	it is easy to show that $$\text{\underline{Hom}}_A(M,N)\overset{\eta}\cong \text{Ext}^1_A(\Omega^{-1}(M),N) \ \text{as}\ A\text{-modules}.$$ In fact, the map $\eta : f\mapsto \alpha_f$ is an isomorphism. It is clear that $\eta$ is natural in $M$ and $N$.
	
	 Let $T_A(\Omega^{-1}(M),N)$ denotes the set of all $T$-split sequences in Ext$^1_A(\Omega^{-1}(M),N)$. If we denote $\eta^{-1}(T_A(\Omega^{-1}(M),N))$ by $\mathcal{R}(M,N)$. Then $\eta $ induces following isomorphism
	$$\frac{\text{\underline{Hom}}_A(M,N)}{\mathcal{R}(M,N)}\cong \frac{\text{Ext}^1_A(\Omega^{-1}(M),N)}{T_A(\Omega^{-1}(M),N)}.$$

\begin{proposition}\label{rel-body}
	$\mathcal{R}$ is a relation on $\CMS(A)$.
\end{proposition}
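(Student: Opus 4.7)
The plan is to verify the three defining properties of a relation (ideal) on the additive category $\CMS(A)$: that each $\mathcal{R}(M,N)$ is a subgroup of $\underline{\Hom}_A(M,N)$, that it is closed under post-composition with any morphism $g \colon N \to N'$, and that it is closed under pre-composition with any morphism $h \colon M' \to M$ in $\CMS(A)$. Since $\mathcal{R}(M,N)$ is defined as $\eta^{-1}(T_A(\Omega^{-1}(M),N))$, these will be reduced to corresponding statements about the subfunctor $T_A$ of $\Ext^1_A(-,-)$ via the naturality of $\eta$.

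For the subgroup property, recall that by the $\m$-adic analogue of Theorem \ref{TsubmoduleExt} (proved in \cite{Pu trn}) the set $T_A(\Omega^{-1}(M),N)$ is an $A$-submodule of $\Ext^1_A(\Omega^{-1}(M),N)$. Since $\eta$ is an $A$-module isomorphism, its preimage $\mathcal{R}(M,N)$ is an $A$-submodule of $\underline{\Hom}_A(M,N)$. The heart of the argument is the naturality of the isomorphism $\eta \colon \underline{\Hom}_A(-,-) \cong \Ext^1_A(\Omega^{-1}(-), -)$ as a bifunctor on $\CMS(A)^{\mathrm{op}} \times \CMS(A)$. For the second variable, the identity $\eta(g \circ f) = \Ext^1_A(\Omega^{-1}(M),g)(\eta(f))$ follows directly from the construction of $\alpha_f$ as a pushout, since composing the pushforward of $\alpha_f$ along $g$ with $\mathrm{id}_{\Omega^{-1}(M)}$ yields precisely $\alpha_{gf}$. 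For the first variable, one uses that $\Omega^{-1}$ is a well-defined functor on $\CMS(A)$ (sending $h \colon M' \to M$ to a morphism $\Omega^{-1}(h) \colon \Omega^{-1}(M') \to \Omega^{-1}(M)$) and verifies, via a diagram chase with the pushout squares defining $\alpha_f$ and $\alpha_{fh}$, that $\eta(f \circ h) = \Ext^1_A(\Omega^{-1}(h), N)(\eta(f))$.

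Combining this naturality with Theorem \ref{Functor} (in its $\m$-adic form from \cite{Pu trn}), which asserts that $T_A$ is a subfunctor of $\Ext^1_A(-,-)$, gives the closure properties. Explicitly, the map $\Ext^1_A(\Omega^{-1}(M), g)$ sends $T_A(\Omega^{-1}(M), N)$ into $T_A(\Omega^{-1}(M), N')$, so $\eta(gf) \in T_A(\Omega^{-1}(M), N')$ whenever $\eta(f) \in T_A(\Omega^{-1}(M), N)$; hence $gf \in \mathcal{R}(M, N')$. Similarly, $\Ext^1_A(\Omega^{-1}(h), N)$ sends $T_A(\Omega^{-1}(M), N)$ into $T_A(\Omega^{-1}(M'), N)$, giving $fh \in \mathcal{R}(M', N)$.

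The main obstacle is making Step 2 rigorous, specifically verifying naturality of $\eta$ in the first variable at the level of the stable category. One must check that $\Omega^{-1}(h)$ is well defined up to morphisms factoring through free modules (so the induced pullback on $\Ext^1$ is independent of the chosen lift of $h$), and that the pullback identity $\eta(fh) = \Omega^{-1}(h)^*\eta(f)$ genuinely holds. This is essentially a compatibility check between the pushout construction of $\alpha_{(-)}$ and the cone/triangle structure on $\CMS(A)$, and is a standard but careful piece of homological bookkeeping that the proof will have to spell out.
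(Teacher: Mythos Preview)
Your proposal is correct and reaches the same conclusion as the paper, but the organisation differs. You reduce everything to two abstract ingredients: the naturality of the isomorphism $\eta \colon \underline{\Hom}_A(-,-) \xrightarrow{\sim} \Ext^1_A(\Omega^{-1}(-),-)$ as a bifunctor on $\CMS(A)$, together with the subfunctor property of $T_A \subseteq \Ext^1_A(-,-)$ from \cite{Pu trn}. The paper instead works directly inside the triangulated category $\CMS(A)$: given $u \in \mathcal{R}(M,N)$ and a morphism $f$ (or $g$), it builds a morphism of exact triangles via axiom (TR3), passes to a commutative diagram of short exact sequences (up to free summands), and then invokes the concrete preservation lemmas \cite[Prop.~3.8, 3.9]{Pu trn} rather than the packaged subfunctor statement. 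Your route is cleaner and more conceptual, since naturality of $\eta$ and functoriality of $\Omega^{-1}$ on the stable category are standard facts; the paper's route makes the underlying exact-sequence manipulations explicit and avoids having to articulate $\Omega^{-1}$ as a functor. The ``main obstacle'' you flag---naturality of $\eta$ in the first variable---is exactly what the paper handles via TR3, so the two arguments are really the same verification in different clothing.
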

\begin{proof}
	To prove that $\mathcal{R}$ is a relation on \underline{CM}$(A)$ we  need to show :\\
	if $M_1,M,N,N_1\in $ CM$(A)$, $u\in \mathcal{R}(M,N)$, $f\in\underline{\Hom}_A(M_1,N)$  and $g\in\underline{\Hom}_A(N,N_1)$  then $u\circ f\in \mathcal{R}(M_1,N) $ and $g\circ u\in \mathcal{R}(M,N_1)$.
	
	We first prove $u\circ f\in \mathcal{R}(M_1,N) $. We have following diagram of exact traingles
	\begin{center}
		\begin{tikzcd}
		M_1 \arrow[r, "u\circ f" ] \arrow[d, "f" ] &N \arrow[r, ""] \arrow[d, "1" ] &C(u\circ f) \arrow[r, ""]\arrow[d,"h" ] &\Omega^{-1}(M_1)  \arrow[d, "" ] \\
		 M \arrow[r, "u"] &N \arrow[r,"" ] &C(u) \arrow[r, ""] &\Omega^{-1}(M)
		\end{tikzcd}
	\end{center}
Note that the map $h$ exists from the property (TR3) (see\cite[Definition 10.2.1]{Weibel}).

So we have following diagram of exact sequences
\begin{center}
	\begin{tikzcd}
	\alpha_{u\circ f}:0 \arrow[r, "" ]  &N \arrow[r, ""] \arrow[d, "1" ] &C(u\circ f)\oplus F \arrow[r, ""]\arrow[d,"h" ] &\Omega^{-1}(M_1)  \arrow[d, "" ]\arrow[r,""]&0 \\
	\alpha_u: 0 \arrow[r, ""] &N \arrow[r,"" ] &C(u)\oplus G \arrow[r, ""] &\Omega^{-1}(M) \arrow[r,""]&0
	\end{tikzcd}
\end{center}
where $F$ and $G$ are free $A$-modules.
Now since $u\in \mathcal{R}(M,N)$, this implies $\alpha_u$ is $T$-split. So from \cite[Proposition 3.9]{Pu trn}, $\alpha_{u\circ f}$ is $T$-split. In other words $u\circ f \in \mathcal{R}(M_1,N)$.

Next we prove $g\circ u\in \mathcal{R}(M,N_1)$.  We have following diagram of exact traingles
\begin{center}
	\begin{tikzcd}
	M \arrow[r, "u" ] \arrow[d, "1" ] &N \arrow[r, ""] \arrow[d, "g" ] &C(u) \arrow[r, ""]\arrow[d,"\theta" ] &\Omega^{-1}(M)  \arrow[d, "" ] \\
	M \arrow[r, "g\circ u"] &N_1 \arrow[r,"" ] &C(g\circ u) \arrow[r, ""] &\Omega^{-1}(M)
	\end{tikzcd}
\end{center}
  Note that the property (TR3) (see \cite[Definition 10.2.1]{Weibel}) guarantees the  existence of map $\theta$.

  So we have following diagram of exact sequences
  \begin{center}
  	\begin{tikzcd}
  	\alpha_{u}:0 \arrow[r, "" ]  &N \arrow[r, ""] \arrow[d, "g" ] &C(u)\oplus F' \arrow[r, ""]\arrow[d,"\theta" ] &\Omega^{-1}(M)  \arrow[d, "1" ]\arrow[r,""]&0 \\
  	\alpha_{g\circ u}: 0 \arrow[r, ""] &N_1 \arrow[r,"" ] &C(g\circ u)\oplus G' \arrow[r, ""] &\Omega^{-1}(M) \arrow[r,""]&0
  	\end{tikzcd}
  \end{center}
where $F'$ and $G'$ are free $A$-modules.
Now since $u\in \mathcal{R}(M,N)$, this implies $\alpha_u$ is $T$-split. So from \cite[Proposition 3.8]{Pu trn}, $\alpha_{g\circ u}$ is $T$-split. In other words $g\circ u \in \mathcal{R}(M_1,N)$.
	\end{proof}
\s Since $\mathcal{R}$ is a relation on \underline{CM}$(A)$, the factor category $\mathcal{D}_A=$ \underline{CM}$(A)/\mathcal{R}$ is an additive category. Note that objects of $\mathcal{D}_A$ are the same as those  of \underline{CM}$(A)$, and for any $M,N\in $ Obj$(\mathcal{D}_A)$, Hom$_{\mathcal{D}_A}(M,N)=\text{\underline{Hom}}_A(M,N)/\mathcal{R}(M,N)$.\\
Also note that $\ell(\text{Hom}_{\mathcal{D}_A}(M,N))<\infty$ (see \cite[Theorem 4.1]{Pu trn}).

Next we want to prove the main result of this section. But first we prove a lemma.

\begin{lemma}\label{Jac}
	Let $(A,\mathfrak{m})$ be a Henselian Gorenstein local ring and $M$ be an MCM $A$-module. Then $\mathcal{R}(M,M)\sub  $ \normalfont{Jac(\underline{End}}$_A(M))$ in \normalfont{\underline{CM}}$(A)$.
\end{lemma}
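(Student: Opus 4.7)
The plan is to reduce to the case when $M$ is indecomposable in $\CMS(A)$ via the block decomposition of the semi-perfect ring $\underline{\text{End}}_A(M)$, and then obtain a contradiction by showing that $1_M \in \mathcal{R}(M,M)$ would force $M$ to be free.

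First I would verify that $\mathcal{R}$ is additive in each argument: $\mathcal{R}(P \oplus Q, L) = \mathcal{R}(P, L) \oplus \mathcal{R}(Q, L)$, and dually in the second slot. Since $\eta$ is natural and $\Omega^{-1}$ respects direct sums, this reduces to the identity $T_A(U_1 \oplus U_2, L) = T_A(U_1, L) \oplus T_A(U_2, L)$. This follows from the fact that $T_A$ is a sub-functor of $\Ext^1_A$ and is therefore closed under the pullback maps $\iota_i^{*}$ and pushforward maps $\pi_i^{*}$ that realize the canonical splitting $\Ext^1_A(U_1 \oplus U_2, L) = \Ext^1_A(U_1, L) \oplus \Ext^1_A(U_2, L)$.

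Next I would invoke Krull-Remak-Schmidt in $\CMa(A)$, which holds since $A$ is Henselian, and discard free summands (which vanish in $\CMS(A)$) to write $M = M_1^{n_1} \oplus \cdots \oplus M_s^{n_s}$ with the $M_i$ pairwise non-isomorphic indecomposable non-free MCM modules. Each $\underline{\text{End}}_A(M_i)$ is then local, so the corresponding matrix-block decomposition exhibits $\underline{\text{End}}_A(M)$ as semi-perfect, and its Jacobson radical contains the entire off-diagonal blocks $\underline{\Hom}_A(M_j, M_i)$ for $i \ne j$ together with matrices over the maximal ideal $\mathfrak{m}_i \subset \underline{\text{End}}_A(M_i)$ inside each diagonal block. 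By the additivity step, $\mathcal{R}(M,M)$ decomposes along the same matrix-block structure, reducing the lemma to showing $\mathcal{R}(M_i, M_i) \subseteq \mathfrak{m}_i$ for each $i$.

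Finally, assuming $M$ is indecomposable and non-free in $\CMS(A)$, I would argue by contradiction: if some $u \in \mathcal{R}(M,M)$ were an isomorphism with inverse $v$, then by Proposition \ref{rel-body} (two-sidedness of the relation $\mathcal{R}$) we would get $1_M = vu \in \mathcal{R}(M,M)$. But $\alpha_{1_M}$ is represented by the defining short exact sequence $0 \to M \to F \to \Omega^{-1}(M) \to 0$ with $F$ free, so $e^T_A(F) = 0$ and
\[
0 = e^T_A(\alpha_{1_M}) = e^T_A(M) + e^T_A(\Omega^{-1}(M)),
\]
which forces $M$ to be free, a contradiction. The main obstacle I anticipate is the bookkeeping in the additivity step and the clean identification of $\text{Jac}(\underline{\text{End}}_A(M))$ from the semi-perfect block structure; once these are in hand, the closing contradiction is immediate.
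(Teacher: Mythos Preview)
Your proposal is correct and follows essentially the same strategy as the paper: both reduce to the indecomposable case via the block/matrix decomposition of $\underline{\text{End}}_A(M)$ coming from KRS, and then derive a contradiction from the fact that the cone of a stable isomorphism is free, forcing $e^T_A(M)+e^T_A(\Omega^{-1}(M))=0$. The only cosmetic difference is that the paper argues directly that $C(u)$ is free when $u$ is a unit, whereas you first pass to $1_M=vu\in\mathcal{R}(M,M)$ via Proposition~\ref{rel-body}; these are equivalent one-line observations.
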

\begin{proof}
We prove this result in three cases:\\
{\bf Case 1:}
	 $M$  is  indecomposable MCM module.\\
	Let $u\in \mathcal{R}(M,M)$ and if possible assume that $u\notin$ Jac(\underline{End}$_A(M))$. This implies $u$ is invertible. Now we have following diagram of exact sequences
	\begin{center}
		\begin{tikzcd}
		0 \arrow[r, "" ]  &M \arrow[r, ""] \arrow[d, "u" ] &F \arrow[r, ""]\arrow[d,"" ] &\Omega^{-1}(M) \arrow[r, ""] \arrow[d, "1" ] &0\\
		\alpha_u: 0 \arrow[r, ""] &M \arrow[r,"" ] &C(u) \arrow[r, ""] &\Omega^{-1}(M) \arrow[r, ""] &0
		\end{tikzcd}
	\end{center}
From here we get $C(u)\cong F$. Also from the assumption $\alpha_u$ is $T$-split. We know that $e^T(\alpha_u)=e^T(M)+ e^T(\Omega^{-1}(M))-e^T(C(u))$.

 So $e^T(\alpha_u)=e^T(M)+ e^T(\Omega^{-1}(M))$ because $C(u)\cong F$. This implies \\  $e^T(\alpha_u)>0$ but this is a contradiction because $\alpha_u$ is $T$-split. Therefore, $u\in$ Jac(\underline{End}$_A(M))$.\\
  {\bf Case 2:} $M\cong E^n$ for some indecomposable MCM module $E$. \\
 It is clear that $\mathcal{R}(M,M)= \mathcal{R}(E^n,E^n)\cong M_n(\mathcal{R}(E,E))$. Here $M_n()$ denotes $n\times n$-matrix.\\
 We also know that \underline{End}$(E^n)\cong M_n(\text{\underline{End}}(E))$ and  \\
 Jac(\underline{End}$(E^n))\cong M_n(\text{Jac(\underline{End}}(E)))$.
 From the case (1), \\ $M_n(\mathcal{R}(E,E))\sub M_n(\text{Jac(\underline{End}}(E))) $.\\ So, $\mathcal{R}(M,M)\sub \text{Jac(\underline{End}}(M))$.\\
 {\bf Case 3:} $M\cong M_1^{r_1}\oplus \ldots\oplus M_q^{r_q}$ with each  $M_i$  indecomposable for all $i=1,\ldots,q$ and $M_i\ncong M_j$ if $i\ne j$ (since $A$ is complete, Krull-Remak-Schmidt (KRS) holds for \underline{CM}$(A)$).\\
 We can assume that $q>1 $ because $q=1$ case follows from case (2). \\
 Now it is sufficient to prove the following claim. \\
 {\bf Claim:} Let $E$ and $L$ be MCM $A$-module. Assume that $E\cong E_1^{a_1}\oplus\ldots\oplus E_n^{a_n}$ and $L\cong L_1^{b_1}\oplus\ldots\oplus L_r^{b_r}$ where  $E_i$   and $L_j$   are distinct indecomposable MCM modules and  $E_i\ncong L_j$ for $i=1,\ldots,n$ and $j=1,\ldots,r$. If the lemma is true for $E$ and $L$, then it is also true for $N=E\oplus L$.\\
 {\bf Proof of the claim:} We know that
 \[ \text{\underline{End}}_A(N)=
 \begin{pmatrix}
 \text{\underline{End}}_A(E) & \text{\underline{Hom}}_A(L,E) \\
 \text{\underline{Hom}}_A(E,L) & \text{\underline{End}}_A(L)
 \end{pmatrix},
 \]

 \[ \text{Jac(\underline{End}}_A(N))=
 \begin{pmatrix}
 \text{Jac(\underline{End}}_A(E)) & \text{\underline{Hom}}_A(L,E) \\
 \text{\underline{Hom}}_A(E,L) & \text{Jac(\underline{End}}_A(L))
 \end{pmatrix}
 \]
 and
 \[ \mathcal{R}(N,N)=
 \begin{pmatrix}
 \mathcal{R}(E,E) & \mathcal{R}(L,E)\\
 \mathcal{R}(E,L) & \mathcal{R}(L,L)
 \end{pmatrix}
 \]	
 Since the result is true for $E$ and $L$, this implies $\mathcal{R}(N,N)\sub \text{Jac(\underline{End}}_A(N)) $.
	\end{proof}
\begin{theorem}\label{isom_th}
Let $(A,\mathfrak{m})$ be a Henselian Gorenstein local ring, $M$ and $N$ be MCM $A$-modules. Then $M\cong N$ in ${\mathcal{D}_A}$ if and only if $M\cong N$ in \normalfont{\underline{CM}}$(A)$.	
\end{theorem}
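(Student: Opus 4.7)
The plan is to split the equivalence into its two directions. The ``only if'' direction requires genuine content; the ``if'' direction is immediate because the canonical functor $\underline{\text{CM}}(A)\to \mathcal{D}_A$ is additive and identity-on-objects, so it sends any isomorphism in $\underline{\text{CM}}(A)$ to an isomorphism in $\mathcal{D}_A$.

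For the substantive direction, suppose $M\cong N$ in $\mathcal{D}_A$. I would unpack this as follows: there exist $f\in \underline{\Hom}_A(M,N)$ and $g\in \underline{\Hom}_A(N,M)$ such that, in $\underline{\text{CM}}(A)$, we have
\begin{equation*}
  g\circ f = 1_M + r \quad\text{and}\quad f\circ g = 1_N + s
\end{equation*}
for some $r\in \mathcal{R}(M,M)$ and $s\in \mathcal{R}(N,N)$. The strategy is to upgrade $r$ and $s$ from ``relation elements'' to ``Jacobson radical elements'', and then to exploit the standard fact that a morphism congruent to the identity modulo the Jacobson radical of an endomorphism ring is an isomorphism.

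The upgrade is exactly the content of Lemma \ref{Jac}, which gives $\mathcal{R}(M,M)\subseteq \operatorname{Jac}(\underline{\End}_A(M))$ and $\mathcal{R}(N,N)\subseteq \operatorname{Jac}(\underline{\End}_A(N))$. Hence $g\circ f = 1_M + r$ is a unit in $\underline{\End}_A(M)$ and $f\circ g = 1_N + s$ is a unit in $\underline{\End}_A(N)$. Let $u=(g\circ f)^{-1}$ and $v=(f\circ g)^{-1}$, both taken in the respective stable endomorphism rings. Then $u\circ g$ is a left inverse of $f$ in $\underline{\text{CM}}(A)$ and $g\circ v$ is a right inverse of $f$ in $\underline{\text{CM}}(A)$; these two inverses must agree by a standard argument, so $f$ is an isomorphism in $\underline{\text{CM}}(A)$. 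This gives $M\cong N$ in $\underline{\text{CM}}(A)$, completing the proof.

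The only real obstacle is having Lemma \ref{Jac} available with the correct hypotheses. The Henselian assumption on $A$ is used precisely there to guarantee that Krull--Remak--Schmidt holds in $\underline{\text{CM}}(A)$, which is what permits the case-by-case reduction in Lemma \ref{Jac} to an indecomposable summand, where the $e^T$-based argument applies. Once that is in hand, the present theorem is essentially a formal consequence of the ``isomorphism lifting'' property of Jacobson radicals.
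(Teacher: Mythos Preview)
Your proposal is correct and follows essentially the same approach as the paper: both directions are handled identically, with the substantive direction reduced via Lemma~\ref{Jac} to the observation that $g\circ f$ and $f\circ g$ are congruent to the identity modulo the Jacobson radical of the respective stable endomorphism rings, hence invertible. Your write-up is in fact slightly more careful than the paper's in spelling out why $f$ itself is then an isomorphism.
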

\begin{proof}
Let $f:M\rt N$ be an isomorphism in \underline{CM}$(A)$. Then $f$ is an isomorphism of $M$ and $N$ in $\mathcal{D}_A$.\\
	For the other direction, suppose $ f:M\rt N$ be an isomorphism in $\mathcal{D}_A$. Then there exists an isomorphism $g:N\rt M$ such that $g\circ f=\mu$ and  $\mu=1 +\delta$ for some $\delta\in \mathcal{R}(M,M)$.
	From Lemma \ref{Jac}, $\delta\in $ Jac(\underline{End}$_A(M))$. This implies $\mu$ is an isomorphism in \underline{CM}$(A)$.
	Therefore, $g\circ f$ is an isomorphism in \underline{CM}$(A)$.
	 Similarly,  $f\circ g$ is also an isomorphism in \underline{CM}$(A)$. This implies $M\cong N$ in \underline{CM}$(A)$.
	\end{proof}

\begin{proposition} \label{ks-body}Let $(A,\mathfrak{m})$ be a Henselian Gorenstein local ring. If $M$ is indecomposabke in $\CMS(A)$ then it is indecomposable in $\mathcal{D}_A$.  Furthermore,
	$\mathcal{D}_A$ is a Krull-Remak-Schmidt (KRS) category.
\end{proposition}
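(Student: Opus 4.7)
The plan is to leverage Lemma \ref{Jac} and Theorem \ref{isom_th} to reduce everything to the corresponding statements in $\CMS(A)$. Since $A$ is Henselian, it is well-known that $\CMS(A)$ is itself a Krull-Remak-Schmidt category, so an MCM module $M$ is indecomposable in $\CMS(A)$ if and only if $\underline{\End}_A(M)$ is a local ring.

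First I would prove the indecomposability statement. Let $M$ be indecomposable in $\CMS(A)$, so $R := \underline{\End}_A(M)$ is local. By construction,
\[
\End_{\mathcal{D}_A}(M) \;=\; \underline{\End}_A(M)/\mathcal{R}(M,M).
\]
Since $\mathcal{R}$ is a relation on $\CMS(A)$ (Proposition \ref{rel-body}), the subset $\mathcal{R}(M,M)$ is a two-sided ideal of $R$ (take $M_1 = N_1 = M$ in the definition of a relation). By Lemma \ref{Jac}, $\mathcal{R}(M,M) \subseteq \operatorname{Jac}(R)$. Hence the quotient $R/\mathcal{R}(M,M)$ is a nonzero local ring, so $M$ is indecomposable in $\mathcal{D}_A$.

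Next I would establish the KRS property. I use the criterion that a Hom-finite additive category is KRS if and only if every object is isomorphic to a finite direct sum of objects with local endomorphism rings. Let $X \in \mathcal{D}_A$; by definition $X$ is an MCM $A$-module, and the decomposition
\[
X \;\cong\; M_1 \oplus \cdots \oplus M_n \qquad (\text{in } \CMS(A))
\]
with each $M_i$ indecomposable exists because $\CMS(A)$ is KRS (as $A$ is Henselian). The same decomposition holds in $\mathcal{D}_A$, and by the first part each $M_i$ is indecomposable in $\mathcal{D}_A$ with local endomorphism ring $\underline{\End}_A(M_i)/\mathcal{R}(M_i,M_i)$. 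This gives the required decomposition into objects with local endomorphism rings, so $\mathcal{D}_A$ is KRS.

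There is no substantial obstacle beyond what Lemma \ref{Jac} already provides; the entire argument is a formal consequence of the containment $\mathcal{R}(M,M)\subseteq \operatorname{Jac}(\underline{\End}_A(M))$ together with the Henselian hypothesis on $A$. The only subtlety worth noting explicitly is that $\mathcal{R}(M,M)$ is a two-sided ideal of $\underline{\End}_A(M)$, which follows directly from the fact that $\mathcal{R}$ is a relation (Proposition \ref{rel-body}), and that uniqueness of direct sum decompositions in $\mathcal{D}_A$ is automatic from the local endomorphism rings of the indecomposable summands (and could also be reconfirmed via Theorem \ref{isom_th} by transferring any ambiguity back to $\CMS(A)$).
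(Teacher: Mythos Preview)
Your proposal is correct and follows essentially the same approach as the paper: both arguments use Lemma \ref{Jac} to show that for an indecomposable $M$ in $\CMS(A)$ the quotient $\underline{\End}_A(M)/\mathcal{R}(M,M)$ is again local, and then deduce the KRS property from the decomposition in $\CMS(A)$ into indecomposables with local endomorphism rings. Your write-up is slightly more explicit (noting that $\mathcal{R}(M,M)$ is a two-sided ideal and invoking the standard Hom-finite KRS criterion), but the content is the same.
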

\begin{proof}
	Let $M$ be an MCM $A$-module, then $M\cong M_1^{a_1}\oplus\ldots\oplus M_n^{a_n}$ in $\CMS(A)$ here each  $M_i$  is distinct indecomposable non free MCM $A$-module.
	
	For any indecomposable non free MCM module $N$ we know  $\underline{\text{End}}_A(N)$ is a local ring and End$_{\mathcal{D}_A}(N)= \underline{\text{End}}_A(N)/\mathcal{R}(N,N)$. From lemma \ref{Jac}, $\mathcal{R}(N,N)\sub \text{Jac(\underline{End}}_A(N))$.  So, End$_{\mathcal{D}_A}(N)$ is a local ring. Thus $N$ is indecomposable in $\mathcal{D}_A$.
	\end{proof}
	
	 \bibliographystyle{plain}
	
\end{document}